\newtheorem{thm}{Theorem}[section]
\newtheorem{lem}[thm]{Lemma}
\newtheorem{prop}[thm]{Proposition}
\newtheorem{cor}[thm]{Corollary}
\theoremstyle{definition}
\newtheorem{remark}[thm]{Remark}
\theoremstyle{definition}
\newtheorem{defn}[thm]{Definition}
\theoremstyle{definition}
\newtheorem{ex}[thm]{Example}
\newcommand{\Str}{\operatorname{Str}}
\begin{document}

\begin{abstract}
Maximal green sequences are important objects in representation theory, cluster algebras, and string theory.   The two fundamental questions about maximal green sequences are whether a given algebra admits such sequences and, if so, does it admit only finitely many.  We study maximal green sequences in the case of string algebras and give sufficient conditions on the algebra that ensure an affirmative answer to these questions. 
\end{abstract}

\title{Maximal Green Sequences for String Algebras}
\author{Alexander Garver}
\address{Department of Mathematics and Statistics,
Carleton College}
\email{alexander.garver@gmail.com}

\author{Khrystyna Serhiyenko}
\address{Department of Mathematics,
University of Kentucky}
\email{khrystyna.serhiyenko@uky.edu}

\maketitle
\section{Introduction}

A maximal green sequence of a \textit{2-acyclic} quiver (i.e., a quiver without loops or 2-cycles) is a distinguished sequence of mutations, in the sense of Fomin and Zelevinsky \cite{fomin2002cluster}, of the quiver. Maximal green sequences were introduced in \cite{keller2011cluster} to provide explicit formulas for the refined Donaldson--Thomas invariants of Kontsevich and Soibelman \cite{kontsevich2008stability}. They are also important in string theory \cite{alim2013bps}, representation theory \cite{brustle2014maximal, brustle2013ordered}, and cluster algebras \cite{gross2018canonical}. 

In particular, in \cite{brustle2013ordered}, it is shown that maximal green sequences of a 2-acyclic quiver $Q$ are in bijection with finite maximal chains in the lattice of torsion classes in the module category of the Jacobian algebra associated with $Q$, in the sense of \cite{derksen2008quivers}, when the Jacobian algebra of $Q$ is finite dimensional. This characterization allows one to define maximal green sequences more generally from the initial data of a finite dimensional algebra $\Lambda = KQ/I$. 

There are two fundamental questions about the maximal green sequences of $\Lambda$:
\begin{enumerate}
\item Does the algebra $\Lambda$ admit a maximal green sequence?
\item If so, does it admit only finitely many?
\end{enumerate}

The first question has been answered in the affirmative when $\Lambda = KQ$ for $Q$ an acyclic quiver \cite[Lemma 2.20]{brustle2014maximal} and when $\Lambda = KQ/I$ is a Jacobian algebra and $Q$ is almost any quiver of finite mutation type \cite[Theorem 1.3]{mills2017maximal}. The second question has been answered in the affirmative in the case when $\Lambda = KQ$ for %{\color{red} CUT: $Q$ an acyclic orientation of a Dynkin diagram or an extended Dynkin diagram and when $Q$ is acyclic and has two or three vertices} 
{an acyclic quiver $Q$ that comes from an orientation of Dynkin or an extended Dynkin diagram or that contains at most three vertices} \cite[Theorems 4.4, 5.2, and 5.4 and Lemma 5.1]{brustle2014maximal}. Futhermore, if $\Lambda = KQ/I$ is a Jacobian algebra where $Q$ is obtained by a finite sequence of mutations of a quiver that is an acyclic orientation of a Dynkin diagram or an extended Dynkin diagram, then $\Lambda$ has only finitely many maximal green sequences \cite[Theorem 2]{brustle2017semi}. We remark that the result cited in the previous sentence holds in the greater generality where $Q$ is assumed to be a valued quiver, but we do not work in such generality in this paper.

%{\color{red} CUT: The goal this paper is to present new examples of algebras that admit maximal green sequences and new examples of algebras that admit only finitely many maximal green sequences. The algebras that we study are all examples of string algebras.} 
{ The goal of this paper is to find sufficient conditions on the algebras so that they admit a maximal green sequence or admit only finitely many maximal green sequences.  Here we study the case of string algebras.} In Corollary~\ref{dom_str_cor}, we show that any domestic string algebra admits at most finitely many maximal green sequences. In Corollary~\ref{deg_3_cor}, we show that any string algebra with the property that each vertex of its quiver has degree at most three admits a maximal green sequence. We also show in Theorem~\ref{dom_gentle_thm} any domestic gentle algebra admits a maximal green sequence.
{Thus, we present new examples of algebras for which the two questions have a positive answer.}

A wide range of techniques have been used to study maximal green sequences, including the combinatorics of surface triangulations, semi-invariant pictures, stability conditions, and torsion classes. The reason that we restrict to string algebras is because the indecomposable modules over a string algebra and morphisms between the indecomposable modules are completely classified; any indecomposable module is either a string or a band module. Such modules are determined by certain walks in the quiver of the algebra known as strings and bands. This classification allows one to make use of yet another classification of the maximal green sequences of an algebra called complete forward hom-orthogonal sequences, in the language of Igusa \cite{igusa2019maximal}. 

A complete forward hom-orthogonal sequence for $\Lambda$ is a sequence of bricks $M_1,\ldots, M_k$ of $\Lambda$ where $\text{Hom}_\Lambda(M_i, M_j) = 0$ for all $i < j$ that cannot be refined in such a way that preserves this condition. Igusa proved that such sequences are in bijection with maximal green sequences of $\Lambda = KQ/I$ when $\Lambda$ is the Jacobian algebra of a quiver that is mutation-equivalent to an acyclic orientation of a Dynkin diagram \cite[Corollary 2.14]{igusa2019maximal}. Demonet and Keller then proved that this bijection stills holds when $\Lambda$ is a finite dimensional algebra \cite[Theorem A.3]{demonet2019survey}. Complete forward hom-orthogonal sequences were also used in \cite{nasr2019maximal} to understand some of the maximal-length maximal green sequences for Jacobian algebras of quivers that are mutation-equivalent to acyclic orientations of Dynkin diagrams.

The paper is organized as follows. In Section~\ref{sec_prelim}, we review the definitions of maximal green sequences, string and band modules, and string algebras. Since the bricks of an algebra are essential to understand the complete forward hom-orthogonal sequences of the algebra, we focus on the properties of the bricks of string algebras in Section~\ref{sec:bricks}. In Section~\ref{sec:mgs}, we apply our results on the bricks of a string algebra from Section~\ref{sec:bricks} to obtain our main results. 

%Combining the classification of maximal green sequences as complete forward hom-orthogonal sequences with the description of indecomposable modules over string algebras a string and band modules, we show that 

%\textcolor{red}{mention Brustle--Treffinger--Smith have studied this}
%
%\textcolor{red}{mention that Keller has an excellent survey article on maximal green sequences}
%
%\textcolor{red}{Give quiver mutation definition of maximal green sequence in the introduction.}

\section{Preliminaries}\label{sec_prelim}

\subsection{Maximal green sequences}\label{sec:MGS} We present the representation-theoretic formulation of maximal green sequences, based on the work of \cite{brustle2017stability, demonet2019survey, demonet2017lattice, igusa2019maximal}. Let $\Lambda= KQ/I$ be a finite dimensional algebra over an algebraically closed field $K$, and let $\Lambda$-mod denote the category of all finitely generated modules over $\Lambda$.  

%{\color{red} do we need to add $K$ is algebraically closed, this appears in the beginning of Section 2.2?}
%\textcolor{red}{I added algebraically closed. Good idea. In Kiyoshi's paper that we cite, he proves some things for finite dimensional algebras over arbitrary fields, but I think we need the algebraically closed assumption to state Theorem 2.1.}

A full subcategory $\mathcal{T} \subseteq \Lambda$-mod is called a \textit{torsion class} if it is closed under extension and quotients. By partially ordering the torsion classes of $\Lambda$-mod by containment, the collection of all torsion classes forms a complete lattice. We let $\textsf{tors}\Lambda$ denote the lattice of torsion classes of $\Lambda$-mod. Br\"ustle, Smith, and Treffinger define a \textit{maximal green sequence} for $\Lambda$ as a finite maximal chain in the lattice of torsion classes of $\Lambda$-mod \cite{brustle2017stability}.  

We say a module $M \in \Lambda$-mod is a \textit{brick} if $\text{End}_{\Lambda}(M)$ is a division algebra. As introduced in \cite[Theorem 3.3]{demonet2017lattice}, for each \textit{covering relation} $\mathcal{T}_1 \lessdot \mathcal{T}_2$ in $\textsf{tors}\Lambda$ (i.e., $\mathcal{T}_1 \subsetneq \mathcal{T}_2$ in $\textsf{tors}\Lambda$ and there does not exist another torsion class $\mathcal{T} \in \textsf{tors}\Lambda$ satisfying $\mathcal{T}_1 \subsetneq \mathcal{T} \subsetneq \mathcal{T}_2$) there is a unique brick $M \in \Lambda$-mod up to isomorphism that satisfies $M \in \mathcal{T}_1$ and $\text{Hom}_\Lambda(T_2, M) = 0$ for all $T_2 \in \mathcal{T}_2$. Consequently, any finite length maximal chain $\mathcal{T}_1 \subseteq \mathcal{T}_2 \subseteq \cdots \subseteq \mathcal{T}_k$ in $\textsf{tors}\Lambda$ gives rise to a sequence of isomorphism classes of bricks $M_1, M_2, \ldots, M_{k-1}$ where $M_i$ is the unique brick associated with the covering relation $\mathcal{T}_i \lessdot \mathcal{T}_{i+1}$. Demonet and Keller showed that this map from finite length maximal chains to sequences of bricks is a bijection in the sense of the following theorem.

%\textcolor{red}{Maybe need to cite something else about why covering relations exist in maximal chains.}  {\color{blue} covering relations exist for functorially finite torsion classes see page 9 the proof of Theorem 5.3 of Demonet-Keller, not sure what happens for $\textsf{tors}\,\Lambda$.} \textcolor{red}{Ok, I propose we only talk about finite maximal chains of torsion classes and finite sequences of bricks to avoid issues with infinite such sequences. I have made this change above and in the theorem below.} {\color{blue} sounds good!} 

\begin{thm}\cite[Theorem A.3]{demonet2019survey}\label{MGS_thm}
A sequence of bricks $M_1, M_2, \ldots, M_{k}$ arises from a maximal chain of torsion classes of $\Lambda$ if and only if $\text{Hom}_{\Lambda}(M_i, M_j) = 0$ for all $i < j$ and this sequence cannot be refined in a way that preserves this condition. In particular, the maximal green sequences for $\Lambda$ are parameterized by finite sequences of bricks $M_1, M_2, \ldots, M_k$ satisfying the latter conditions.
\end{thm}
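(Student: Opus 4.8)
The plan is to derive the statement entirely from the brick labelling of covering relations recalled above (\cite[Theorem 3.3]{demonet2017lattice}), which I will use in the following form: the brick $M$ attached to a covering relation $\mathcal{T}_1\lessdot\mathcal{T}_2$ lies in $\mathcal{T}_2$, satisfies $\operatorname{Hom}_\Lambda(\mathcal{T}_1,M)=0$, and generates $\mathcal{T}_2$ over $\mathcal{T}_1$, and it is moreover the unique brick with the first two properties. Three standard facts about a finite dimensional algebra are also needed: every torsion class $\mathcal{T}$ underlies a torsion pair, so each module splits functorially into a $\mathcal{T}$-torsion and a $\mathcal{T}$-torsionfree part and $\mathcal{T}\cap\mathcal{T}^{\perp}=\{0\}$; the smallest torsion class $\mathsf{T}(S)$ generated by a set $S$ has $\mathsf{T}(S)^{\perp}=S^{\perp}$ (where $\mathcal{X}^{\perp}=\{Y:\operatorname{Hom}_\Lambda(\mathcal{X},Y)=0\}$ and likewise ${}^{\perp}\mathcal{X}$); and a nonzero object of minimal length in $\mathcal{U}\cap\mathcal{F}$, with $\mathcal{U}$ a torsion class and $\mathcal{F}$ a torsionfree class, is automatically a brick (the image of a nonzero non-invertible endomorphism would be a strictly shorter nonzero object of $\mathcal{U}\cap\mathcal{F}$), so in particular every nonzero torsion class contains a simple module.

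For the forward implication I would begin with a finite maximal chain, necessarily of the form $\{0\}=\mathcal{T}_0\lessdot\dots\lessdot\mathcal{T}_k=\Lambda\text{-mod}$ with labels $M_1,\dots,M_k$. An induction along the chain gives $\mathcal{T}_j=\mathsf{T}(M_1,\dots,M_j)$, hence $\mathcal{T}_j^{\perp}=\{M_1,\dots,M_j\}^{\perp}$; and a downward induction based on the identity $\mathcal{T}_j=\mathcal{T}_{j+1}\cap{}^{\perp}M_{j+1}$ (valid because $\mathcal{T}_j$ sits inside the right-hand side, while that intermediate torsion class omits $M_{j+1}$ and so cannot be $\mathcal{T}_{j+1}$) yields the complementary description $\mathcal{T}_j={}^{\perp}\{M_{j+1},\dots,M_k\}$. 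Forward hom-orthogonality is then immediate, since for $i<j$ one has $M_i\in\mathcal{T}_i\subseteq\mathcal{T}_{j-1}$ and $\operatorname{Hom}_\Lambda(\mathcal{T}_{j-1},M_j)=0$; and non-refinability follows because a brick $N$ that could be inserted between $M_p$ and $M_{p+1}$ preserving forward hom-orthogonality would lie in $\{M_1,\dots,M_p\}^{\perp}\cap{}^{\perp}\{M_{p+1},\dots,M_k\}=\mathcal{T}_p^{\perp}\cap\mathcal{T}_p=\{0\}$.

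For the backward implication, given bricks $M_1,\dots,M_k$ with $\operatorname{Hom}_\Lambda(M_i,M_j)=0$ for $i<j$ and admitting no proper refinement, I would set $\mathcal{T}_j:={}^{\perp}\{M_{j+1},\dots,M_k\}$ and show $\{0\}=\mathcal{T}_0\lessdot\dots\lessdot\mathcal{T}_k=\Lambda\text{-mod}$. Here $\mathcal{T}_k=\Lambda\text{-mod}$ is clear, $M_j\in\mathcal{T}_j\setminus\mathcal{T}_{j-1}$ makes the inclusions strict, and $\mathcal{T}_0=\{0\}$ since a simple module in a nonzero $\mathcal{T}_0$ could be prepended to the sequence. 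The substantive point is that each $\mathcal{T}_{j-1}\subseteq\mathcal{T}_j$ is a covering relation: given an intermediate torsion class $\mathcal{T}_{j-1}\subsetneq\mathcal{U}\subsetneq\mathcal{T}_j$, I would first rule out $M_j\in\mathcal{U}$ by producing a brick in $\mathcal{T}_j\cap\mathcal{U}^{\perp}$ that refines the sequence directly after $M_j$, and then pick a brick $N$ of minimal length in $\mathcal{U}\cap\mathcal{T}_{j-1}^{\perp}$; if $\operatorname{Hom}_\Lambda(N,M_j)=0$ it refines the sequence directly before $M_j$, while otherwise a nonzero map $N\to M_j$ must be a monomorphism (its image lies in $\mathcal{U}\cap\mathcal{T}_{j-1}^{\perp}$ because $M_j\in\mathcal{T}_{j-1}^{\perp}$ by construction, and minimality of $N$ forces the image to be all of $N$), so $N$ embeds properly into the brick $M_j$, whence $\operatorname{Hom}_\Lambda(M_j,N)=0$ and $N$ refines the sequence directly after $M_j$. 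Either way non-refinability is contradicted, so the chain consists of covering relations; and since $M_j\in\mathcal{T}_j$, $\operatorname{Hom}_\Lambda(\mathcal{T}_{j-1},M_j)=0$, and $M_j$ is a brick, the uniqueness part of \cite[Theorem 3.3]{demonet2017lattice} identifies $M_j$ as the label of $\mathcal{T}_{j-1}\lessdot\mathcal{T}_j$.

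I expect the only real obstacle to be this last step — verifying that the explicitly constructed chain in the backward direction is a chain of covering relations — since everywhere else the conclusion is a formal consequence of the orthogonality relations and the labelling. One must argue by contradiction against a hypothetical intermediate torsion class, carefully handling the dichotomy on whether $\operatorname{Hom}_\Lambda(N,M_j)$ vanishes, and it is here that the small module-theoretic observation (an otherwise obstructed minimal brick embeds into $M_j$) does the work. A secondary point of care throughout is tracking which side of each $\operatorname{Hom}$-space is being used, since the covering relation label is not symmetric in $\mathcal{T}_1$ and $\mathcal{T}_2$.
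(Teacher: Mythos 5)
The paper does not prove this theorem --- it is quoted from Demonet--Keller \cite[Theorem A.3]{demonet2019survey} and used as a black box --- so there is no in-paper argument to compare against; what I can do is assess your reconstruction, and it is correct and amounts to the standard argument. You identify the two formal descriptions $\mathcal{T}_j=\mathsf{T}(M_1,\ldots,M_j)={}^{\perp}\{M_{j+1},\ldots,M_k\}$ of the chain attached to a forward hom-orthogonal sequence, deduce forward hom-orthogonality and non-refinability from $\mathcal{T}_p\cap\mathcal{T}_p^{\perp}=0$, and in the converse direction manufacture a refining brick from any hypothetical intermediate torsion class $\mathcal{U}$ by taking a minimal-length nonzero object of $\mathcal{T}_j\cap\mathcal{U}^{\perp}$ (when $M_j\in\mathcal{U}$) or of $\mathcal{U}\cap\mathcal{T}_{j-1}^{\perp}$ (when $M_j\notin\mathcal{U}$). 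The delicate sub-case where $\operatorname{Hom}_{\Lambda}(N,M_j)\neq 0$ is handled correctly: minimality forces any such nonzero map to be a proper monomorphism, and composing a hypothetical nonzero $M_j\to N$ back with it would give a non-invertible, hence zero, endomorphism of the brick $M_j$, which forces $\operatorname{Hom}_{\Lambda}(M_j,N)=0$ so that $N$ refines directly after $M_j$. One small point worth flagging: the sentence in Section~\ref{sec:MGS} recalling the brick labelling of a covering relation $\mathcal{T}_1\lessdot\mathcal{T}_2$ reads ``$M\in\mathcal{T}_1$ and $\operatorname{Hom}_{\Lambda}(T_2,M)=0$ for all $T_2\in\mathcal{T}_2$,'' which would force $M=0$; the intended statement is $M\in\mathcal{T}_2$ with $\operatorname{Hom}_{\Lambda}(T_1,M)=0$ for all $T_1\in\mathcal{T}_1$, which is exactly the version your proof (correctly) relies on.
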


The condition on sequences of bricks in the above theorem was first written down by Igusa in \cite[Definition 2.2]{igusa2019maximal}. He referred to such sequences of bricks as \textit{complete forward hom-orthogonal (FHO) sequences}, and he showed that when $\Lambda$ is a cluster-tilted algebra of finite representation type, its complete FHO sequences parameterize the maximal green sequences of $\Lambda$ \cite[Corollary 2.14]{igusa2019maximal}. 

For convenience, we recall two additional terms from Igusa's work. We say a sequence of bricks $M_1, M_2, \ldots$ is a \textit{weakly FHO sequence} if $\text{Hom}_{\Lambda}(M_i, M_j) = 0$ for any $i < j$. In light of Theorem~\ref{MGS_thm}, we will use the terms maximal green sequence and a complete forward hom-orthogonal sequence interchangeably.  It will be clear from the context whether in a given instant we are referring to a sequence of bricks or a sequence of torsion classes.

\subsection{String and band modules} Let $Q=(Q_0, Q_1)$ be a finite quiver where $Q_0$ denotes the set of vertices and $Q_1$ denotes the set of arrows in $Q$.   Given an arrow $\gamma \in Q_1$, its starting and ending vertices are denoted by $s(\gamma), t(\gamma)$ respectively, where $s(\gamma) \xrightarrow{\gamma} t(\gamma)$.   Let $\Lambda=KQ/I$ be a finite dimensional algebra over an algebraically closed field $K$. We formally define $Q_1^{-1}$ to be the set of \textit{inverse arrows} of $Q$. Elements of $Q_1^{-1}$ are denoted by $\gamma^{-1}$, for $\gamma \in Q_1$, and $s(\gamma^{-1}):=t(\gamma)$ and $t(\gamma^{-1}):=s(\gamma)$.
A \emph{string} in $\Lambda$ of length $d\geq 1$ is a word $w = \gamma_1^{\varepsilon_1}\cdots\gamma_d^{\varepsilon_d}$ in the alphabet $Q_1 \sqcup Q_1^{-1}$ with $\varepsilon_i \in \{\pm 1\}$, for all $i \in \{1,2,\cdots,d\}$, which satisfies the following conditions:

\begin{enumerate}
\item[(1)] $s(\gamma_{i+1}^{\varepsilon_{i+1}})=t(\gamma_i^{\varepsilon_i})$ and $ \gamma_{i+1}^{\varepsilon_{i+1}} \neq \gamma_i^{-\varepsilon_i}$, for all $i\in \{1,\cdots,d-1 \}$;
\item[(2)] $w$ and also $w^{-1} := \gamma_d^{-\varepsilon_d}\cdots\gamma_1^{-\varepsilon_1}$ do not contain a subpath in $I$.
\end{enumerate}

%{\color{blue} are you composing arrows right to left here?  i.e. if we have $\gamma_1: 1\to 2$ and $\gamma_2: 2\to 3$ then you would write $\gamma_2\gamma_1$?  I always use the other convention $\gamma_1\gamma_2$, which I think is easier to follow. Would this be ok?} \textcolor{red}{Yes, we can use your convention.}. {\color{blue} thank you!} \textcolor{red}{I have made that change.}

We refer to the symbols $\gamma_i^1$ and $\gamma_j^{-1}$ appearing in some string $w$ as \textit{arrows} and \textit{inverse arrows} of $w$. In the case that $\gamma_i^{\varepsilon_i}$ has $\varepsilon_i = 1$, we will simply write $\gamma_i$. We also associate a string of length zero to every vertex $i \in Q_0.$ We denote this string by $e_i$. We let $\Str(\Lambda)$ denote the set of strings in $\Lambda$ up to the equivalence relation where a string $w$ is identified with $w^{-1}$; we write this equivalence as $w \sim w^{-1}$. 

{We say that $v$ is a \textit{substring} of $w$ if $v = \gamma_i^{\varepsilon_i}\cdots \gamma_j^{\varepsilon_j}$ for some $1 \le i \le j \le d$ or if $v = e_i$ for some vertex $i \in Q_0$ on which $w$ is supported.}

We say $w$ \textit{starts} at $s(w)=s(\gamma_1^{\varepsilon_1})$ and \textit{ends} at $t(w)=t(\gamma_d^{\varepsilon_d})$. We say that a string $w$ of positive length is a \emph{cyclic} string if $s(w) = t(w)$. If $w$ is a cyclic string, it is called a \emph{band} if $w^m$ is a string for each $m \in \mathbb{Z}_{\geq 1}$ and $w$ is \textit{primitive} (i.e., it is not a power of a string of strictly smaller length).  It is important to note that, by convention, when we write $w^m$ for some string $w$ and $m \ge 2$, we require that $w$ be a string of length at least 1.  %{\color{red} sounds good}

\begin{ex}\label{example1} Consider the algebra $\Lambda = KQ/\langle \alpha^2, \gamma^2 \rangle$ where $Q$ is the following quiver
$$\xymatrix@C=20pt@R=20pt{
1\ar[rr]^{\beta} \ar@(dl,ul)^\alpha &&2 \ar@(dr, ur)_\gamma}.$$
Some examples of strings in this algebra are $e_1, e_2, \alpha^1, \beta^1, \gamma^1, \beta^{1}\gamma^{-1}\beta^{-1}$, and $\alpha^{1}\beta^1\gamma^{-1}\beta^{-1}$. Among these only $\alpha^{1}\beta^1\gamma^{-1}\beta^{-1}$ is a band.
\end{ex}

%\textcolor{red}{DEFINE AN ALGEBRA THAT WILL BE A RUNNING EXAMPLE; EXAMPLE OF A STRING, A BAND}

Let $w =  \gamma_1^{\varepsilon_1}\cdots \gamma_d^{\varepsilon_d}$ be an element of $\Str(\Lambda)$. We can express the walk on $Q$ determined by $w$ as the sequence \[\xymatrix{x_{1} \ar@{-}^{\gamma_1}[r] & x_2 \ar@{-}^{\gamma_{2}}[r] & \cdots & x_{d+1} \ar@{-}_{\gamma_d}[l]}\] where $x_1,\ldots, x_{d+1}$ are the vertices of $Q$ visited by $w$, a priori each one may be visited multiple times. The orientation of arrows is suppressed in this notation. The \textit{string module} defined by $w$ is the quiver representation $ M(w) := ((V_i)_{i \in Q_0}, (\varphi_\alpha)_{\alpha\in Q_1})$ with vector spaces given by
$$\begin{array}{cccccccccccc}
V_i & := & \left\{\begin{array}{ccl} \displaystyle\bigoplus_{j: x_j = i}Kx_j &: & \text{if } i = x_j \text{ for some } j \in \{1,\ldots, d+1\}\\ 0 & : & \text{otherwise} \end{array}\right.
\end{array}$$ for each $i \in Q_0$ and with linear transformations given by
$$\begin{array}{cccccccccccc}
\varphi_\alpha(x_k) & := & \left\{\begin{array}{lcl} x_{k-1} &: & \text{if } \alpha = \gamma_{k-1} \text{ and } \varepsilon_k = -1\\ x_{k+1} &: & \text{if } \alpha = \gamma_{k} \text{ and } \varepsilon_k = 1\\  0 & : & \text{otherwise} \end{array}\right.
\end{array}$$ for each $\alpha \in Q_0$. We see that $\dim_K(V_i) = |\{j \in \{1,\ldots, d+1\} \mid \ x_j = i\}|$ for any $i \in Q_0$, and for any string $w$, one has $M(w) \simeq M(w^{-1})$ as $\Lambda$-modules.

If $w =  \gamma_1^{\varepsilon_1}\cdots \gamma_d^{\varepsilon_d}$ is a band in $\Str(\Lambda)$, it gives rise to a band module, as well as to a string module. This \textit{band module}, denoted $M(w,\lambda, k) = ((V_i)_{i \in Q_0}, (\varphi_\alpha)_{\alpha\in Q_1})$ with $\lambda \in K^*$ and $k \in \mathbb{N}_{>0}$, is defined as follows:
\begin{itemize}
\item for each $i \in Q_0$, we have $V_i = K^k$;
\item the linear maps are given by $$\begin{array}{cccccccccccc}
\varphi_\alpha & := & \left\{\begin{array}{lcl} I_k &: & \text{if } \alpha = \gamma_{i}  \text{ for some } i \in \{1,\ldots, d-1\}\\ J_k(\lambda^{\varepsilon_d}) &: & \text{if } \alpha = \gamma_{d} \\  0 & : & \text{otherwise} \end{array}\right.\end{array}$$ where $I_k$ is the $k\times k$ identity matrix and $J_k(\lambda^{\varepsilon_d})$ is the $k \times k$ lower-triangular Jordan matrix with eigenvalue $\lambda^{\varepsilon_d}$.
\end{itemize}
From the definition, one verifies that $M(w,\lambda,k) \simeq M(w^{-1},\lambda^{-1}, k)$ and $M(w,\lambda,k) \simeq M(w^\prime, \lambda, k)$ where $w^\prime$ is any band obtained by applying a cyclic permutation $\sigma \in \mathfrak{S}_d$ to $w$ in the sense that $w^\prime = \gamma_{\sigma(1)}^{\varepsilon_{\sigma(1)}}\cdots \gamma_{\sigma(d)}^{\varepsilon_{\sigma(d)}}$. In this case we write $w\sim w'$.  The following definition identifies particularly important types of bands for our study of maximal green sequences.

\begin{defn}
We say a band $w$ is \textit{minimal} if for all bands $w' \sim w$ there does not exist another band $v$ such that $v^k$ is a substring of $w'$ for some $k\geq 2$. 
\end{defn}

% Note that if a band is minimal, then it is also primitive.   \textcolor{red}{For our definition, all bands are primitive. So I propose we cut the previous sentence.}. 
%( define what we mean by a substring, probably earlier when we talk about strings). 
%\textcolor{cyan}{I attempted to do this last thing.}

%\textcolor{red}{ADD EXAMPLES OF STRING AND BAND MODULES; DEFINE DIAGRAM OF A STRING}

\begin{ex}\label{example2}
Consider the algebra $\Lambda = KQ/\langle \beta_1\beta_2, \gamma_2\gamma_1\rangle$ where $Q$ is the following quiver %\textcolor{red}{Are these relations written according to our conventions?} {\color{blue} I rewrote the relation $\beta_2\beta_1$ as $\beta_1\beta_2$ i.e. the convention for the relations should be the same as for writing strings}
\[\xymatrix@C=20pt@R=20pt{
1\ar[rr]^{\beta_1} \ar[dr]_{\alpha_1} & &2 \ar[dl]^{\gamma_1} \ar[rr]^{\beta_2} & & 3 \\
&4 & & 5 \ar[ul]^{\gamma_2} \ar[ur]_{\alpha_2}}.\]
The expression $\gamma_2^1\beta_2^1\alpha_2^{-1}\gamma_2^1\beta_1^{-1}$ is a string in $\Lambda$; its string module is as follows. 
%{\color{red} Should the 2x2 matrix below be the identity? Yes, good catch.}
\[\xymatrix@C=20pt@R=20pt{
K\ar[rr]^{\tiny \left[\begin{array}{c}0\\ 1\end{array}\right]} \ar[dr]_{0} & &K^2 \ar[dl]_{0} \ar[rr]^{\tiny \left[\begin{array}{cc}1 & 0\end{array}\right]} & & K \\
&0 & & K^2 \ar[ul]^{\tiny \left[\begin{array}{cc}1 & 0\\ 0 & 1\end{array}\right]} \ar[ur]_{\tiny \left[\begin{array}{cc}0 & 1\end{array}\right]}}\]
Also, the strings $w_1:=\beta_1^{-1}\alpha_1^1\gamma_1^{-1}$, $w_2:=\beta^1_2\alpha_2^{-1}\gamma_2^1$, $w_2w_1$, and $w^2_2w_1$ are all bands of $\Lambda$, but the latter is not minimal. The band module $M(w_2, \lambda, 2)$ is as follows.  
\[\xymatrix@C=20pt@R=20pt{
0\ar[rr]^{0} \ar[dr]_{0} & &K^2 \ar[dl]_{0} \ar[rr]^{\tiny \left[\begin{array}{cc}1 & 0\\ 0 & 1\end{array}\right]} & & K^2 \\
&0 & & K^2 \ar[ul]^{\tiny \left[\begin{array}{cc}\lambda & 0\\ 1 & \lambda\end{array}\right]} \ar[ur]_{\tiny \left[\begin{array}{cc}1 & 0 \\ 0 & 1\end{array}\right]}}\]
%{\color{blue} I think it should be $w^2_2w_1$ is not minimal, because in the definition of minimal I use $k\geq 2$. (I don't remember right now why $k\geq 2$, but I think this was important later on).}
\end{ex}

We will frequently represent a string $w = \gamma_1^{\varepsilon_1} \cdots \gamma_d^{\varepsilon_d}$ or string module $M(w)$ as a diagram describing the action of $\Lambda$ on $M(w)$. We draw a southeast arrow for each symbol $\gamma_i^1$ in $w$, a southwest arrow for each symbol $\gamma_i^{-1}$ in $w$, and we arrange these arrows into a directed graph whose underlying graph is a path. For example, the string $\gamma^1_2\beta^1_2\alpha_2^{-1}\gamma^1_2\beta_1^{-1}$ from Example~\ref{example2} would be represented as follows.
\[\xymatrix@C=20pt@R=20pt{ 
5 \ar[dr]^{\gamma^1_2} \\
& 2 \ar[dr]^{\beta^1_2} & & 5 \ar[dl]^{\alpha_2^{-1}} \ar[dr]^{\gamma^1_2} & & 1 \ar[dl]^{\beta_1^{-1}} \\
& & 3 & & 2}\]
We refer to this picture as the \textit{diagram} of $w$ or of $M(w)$.

\subsection{String algebras} %\textcolor{red}{also mention gentle algebras}%The focus of our work is on finite dimensional algebras, all of whose indecomposable modules are isomorphic to either string or band modules. 
It is straightforward to verify that string modules and band modules are indecomposable. As shown in \cite{wald1985tame}, string algebras $\Lambda$ are examples of algebras with the property that every indecomposable $\Lambda$-module is isomorphic to a string module or a band module. We are able to obtain our results on the maximal green sequences for such algebras by making use of the combinatorics of these string and band modules. We recall the definition of a string algebra now.  

We say that a finite dimensional algebra $\Lambda = KQ/I$ is a \textit{string algebra} if it satisfies the following properties:
\begin{itemize}
\item[(S1)] for any vertex $i \in Q_0$, there are at most two incoming and at most two outgoing arrows, and
\item[(S2)] for any arrow $\alpha \in Q_1$, there is at most one arrow $\beta$ and one arrow $\gamma$ such that $\alpha\beta \not \in I$ and $\gamma\alpha \not \in I$.
\end{itemize} 

We will be particularly interested in \textit{domestic} string algebras (i.e., string algebras with only finitely many bands); see for instance, \cite[Section 11]{ringel1995some}.  We will also work with a subclass of string algebras known as gentle algebras. A finite dimensional algebra $\Lambda$ is a \textit{gentle algebra} if, in addition to satisfying (S1) and (S2), it also satisfies the following two properties:
\begin{itemize}
\item[(G1)] for each arrow $\alpha \in Q_1$, there is at most one arrow $\beta$ and at most one arrow $\gamma$ such that $0 \neq \alpha\beta \in I$ and $0\neq \gamma\alpha \in I$,
\item[(G2)] the ideal $I$ may be generated by a finite set of paths of length two.
\end{itemize} 

\begin{ex}
The algebra $K(\xymatrix{1 \ar@<-1ex>[r]_\beta \ar@<1ex>[r]^\alpha & 2 })$ is a gentle algebra of domestic type because its only band is $\beta^{-1}\alpha^1$.
\end{ex}

\begin{ex}
The algebra $\Lambda$ from Example~\ref{example2} is a gentle algebra, and it is not of domestic type. Indeed, the set of strings $\{w_1(w_2w_1)^k \mid k \ge 1\}$ is an infinite set of bands of $\Lambda$.
\end{ex}

%\textcolor{red}{ADD STUFF ABOUT SUBMODULES, QUOTIENTS, AND SOME EXAMPLES}

To understand complete forward hom-othogonal sequences of $\Lambda$, we will need to understand morphisms between string modules. The following is well-known; it gives a useful method for constructing morphisms between string modules.

%{\color{red} In the statement below should we say "number of strings $u$ with multiplicities"?  because if $u$ appears twice in $M(w)$ then it contributes 2 to the dimension.}

\begin{prop}
Let $w$ and $w^\prime$ be two strings of a string algebra $\Lambda$. Then the dimension of $\text{Hom}_\Lambda(M(w), M(w^\prime))$ is the number of strings $u$ with multiplicities where $u$ is a substring of both $w$ and $w^\prime$ such that $M(u)$ is a quotient of $M(w)$ and $M(u)$ is a submodule of $M(w^\prime)$.
\end{prop}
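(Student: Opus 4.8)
The plan is to reduce the computation of $\operatorname{Hom}_\Lambda(M(w), M(w'))$ to a basis built out of admissible pairs of substring identifications, in the style of the classical ``graph maps'' description for string algebras (Crawley-Boevey, Krause). First I would recall the diagrammatic picture of a string module: the diagram of $w$ is a ``mountain range'' of peaks and valleys, and a quotient of $M(w)$ corresponds to cutting off a connected portion of the diagram along valleys (i.e.\ a substring $u$ such that $M(u)$ is a quotient of $M(w)$ — equivalently $u$ sits at the ``top'' of a portion of the diagram, with the arrows immediately outside $u$ on either end pointing away), while a submodule of $M(w')$ corresponds to a connected portion cut off along peaks (a substring at the ``bottom'' of the diagram, arrows immediately outside pointing in). These are exactly the ``cosecting'' and ``secting'' factor/sub strings in the standard terminology.

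The key steps, in order: (1) Given a common substring $u$ that is a quotient of $M(w)$ via the quotient map $q_u\colon M(w)\twoheadrightarrow M(u)$ and a submodule of $M(w')$ via $\iota_u\colon M(u)\hookrightarrow M(w')$, define $f_u := \iota_u \circ q_u \in \operatorname{Hom}_\Lambda(M(w), M(w'))$; a string $u$ appearing with multiplicity (because $u$ or $u^{-1}$ can be identified inside $w$ in more than one position, and similarly inside $w'$) contributes one such homomorphism for each choice of positions. (2) Show these $f_u$, ranging over all such $u$ with multiplicity, are linearly independent: this follows by looking at which basis vectors $x_j$ of $M(w)$ (the vertices visited by $w$) are sent to which basis vectors of $M(w')$; distinct admissible identifications have distinct ``support'' behavior on a suitable top/socle vector, so no nontrivial linear combination vanishes. (3) Show they span: take an arbitrary $f\colon M(w)\to M(w')$, decompose $M(w)$ and $M(w')$ into their canonical bases indexed by the walks, and use that $\Lambda$ is a string algebra — conditions (S1), (S2) force each arrow at a vertex to have at most one non-annihilated preimage/image — to argue that the matrix of $f$ in these bases, after subtracting off an appropriate combination of the $f_u$, must be zero. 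Concretely, one induces on the number of nonzero entries of the matrix of $f$: pick a basis vector $x$ of $M(w)$ mapping to a nonzero element, track the string-module relations it must satisfy, and extract a maximal common substring $u$ realizing a genuine factor-then-sub factorization, so that $f - \lambda f_u$ has strictly smaller support.

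The main obstacle I expect is step (3), the spanning/surjectivity argument, and within it the precise bookkeeping that the admissibility conditions ``$M(u)$ is a quotient of $M(w)$'' and ``$M(u)$ is a submodule of $M(w')$'' are exactly the right ones — i.e.\ that the arrows bordering the occurrence of $u$ in $w$ point outward at both ends (or $u$ reaches an endpoint of $w$) while those bordering $u$ in $w'$ point inward. One must check that any homomorphism is forced to respect this: a basis vector of $M(w)$ that is not killed is pushed by the $\Lambda$-action down toward a ``top'' of its connected piece, its image is pulled up toward a ``socle'' element of $M(w')$, and matching these two local shapes pins down $u$ together with its two embeddings. The band case does not arise here since the statement concerns only $\operatorname{Hom}$ between two \emph{string} modules, so I would simply cite the standard references (e.g.\ \cite{wald1985tame}, Crawley-Boevey) for the detailed verification and present the diagrammatic argument in enough detail to make the constructions $q_u$, $\iota_u$, and $f_u$ explicit, since those are what we use later. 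Finally I would note that the ``with multiplicities'' phrasing records precisely that the same abstract string $u$ may yield several linearly independent maps, one per pair of compatible positions in $w$ and $w'$.
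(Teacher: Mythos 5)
The paper offers no proof of this proposition at all: it is stated as well-known (it is the classical ``graph maps'' basis theorem of Crawley-Boevey and Krause), and your sketch is a correct outline of exactly that standard argument, with the right admissibility conditions on the bordering arrows and the right ``count pairs of positions'' reading of the multiplicities. Since you too ultimately defer the hard spanning step to the standard references, your treatment is in substance the same as the paper's, just with the constructions $q_u$, $\iota_u$, $f_u$ made explicit.
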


Let $w =  \gamma_1^{\varepsilon_1}\cdots \gamma_d^{\varepsilon_d}$. Recall that $M(u)$ is a quotient of $M(w)$ if $u = \gamma_i^{\varepsilon_i}\cdots \gamma_j^{\varepsilon_j}$ is a substring of $w$ such that if $i\neq 1$ (resp., $j \neq d$), then $\varepsilon_{i -1} = -1$ (resp., $\varepsilon_{j+1} = 1$). Similarly,   $M(u)$ is a submodule of $M(w)$ if $u = \gamma_i^{\varepsilon_i}\cdots \gamma_j^{\varepsilon_j}$ is a substring of $w$ such that if $i\neq 1$ (resp., $j \neq d$), then $\varepsilon_{i -1} = 1$ (resp., $\varepsilon_{j+1} = -1$).

\section{Properties of bricks}\label{sec:bricks}

%Note that $u$ is not necessarily a band as it might start and end with an arrow.
%\textcolor{red}{Why is $u$ not necessarily a band?} 

%Given a string $\gamma$, let $M(\gamma)$ denote the corresponding string $\Lambda$-module.  Given a primitive band $w$ let $M(w^k, \lambda)$ denote the associated band module where $k$ is a positive integer and $\lambda\in K\setminus\{0\}$.  It will also be useful to consider an infinite string module $M(w^{\infty})$ for a band $w$.  

%\textcolor{red}{It might be easier/better to avoid working with string modules $M(w^\infty)$. For the arguments below, perhaps we could just just $N$ sufficiently large so that a given module $M(\epsilon)$ is a quotient or submodule of $M(w^N)$. This comes up in Lemma 2.4.}. {\color{blue} okay, that works!}

In this section we prove a number of lemmas needed to show the main results appearing in the next section.  We identify certain conditions on the modules that make them bricks.  In particular, we focus on string modules $M(\gamma)$ where $\gamma$ contains a substring $w$ for some band $w$.

The first lemma says that band modules cannot lie on a complete forward hom-orthogonal sequence. Therefore, we can omit band modules when studying maximal green sequences. Before proving the lemma we briefly review the connection between $\tau$-tilting modules, torsion classes, and bricks.  Recall that a $\Lambda$-module $N$ is {\it $\tau$-rigid} if $\text{Hom}_{\Lambda}(N, \tau N)=0$, where $\tau$ denotes the Auslander-Reiten translation in $\Lambda$-mod.   By \cite[Theorem 0.5]{AIR} there is a bijection between support $\tau$-tilting modules $N$ and functorially finite torsion classes  $\mathcal{T}(N)$ of $\Lambda$.  For a precise definition of support $\tau$-tilting modules see \cite[Definition 0.1]{AIR}, but for us it will only be important to note that such modules are $\tau$-rigid. Moreover, if two functorially finite torsion classes form a covering relation $\mathcal{T}(N_1)\lessdot \mathcal{T}({N}_2)$ in $\textsf{tors}\Lambda$  then $N_2$ differs from $N_1$ by an indecomposable summand $X\in \text{add}\,N_2 \setminus \text{add}\,N_1$  \cite[Theorem 2.30]{AIR}.  

Recall that Theorem~\ref{MGS_thm} gives a bijection between finite maximal chains in $\textsf{tors}\Lambda$ and certain sequences of bricks, and under this correspondence the module $M$ associated to the covering relation $\mathcal{T}(N_1)\lessdot \mathcal{T}({N}_2)$ is the unique smallest quotient of $X$ (with respect to dimension) that is a brick \cite[Proposition 4.9]{demonet2017lattice}.    In particular, $M$ is the unique smallest brick module in $\mathcal{T}({N}_2)\setminus \mathcal{T}({N}_1)$.  Also, note that a maximal green sequence in $\textsf{tors}\Lambda$ always comes from a chain of functorially finite torsion classes, see the proof of Theorem 5.3 in \cite{demonet2019survey} and references therein.

%{\color{blue} Note to myself: need to add stuff on the bijection between functorially finite torsion classes (only those can come from a MGS) and support $\tau$-tilting modules, where $\tau$-tilting modules are the ext-projectives, cite AIR.  Then get brick from support $\tau$-tilting module via approximation on the left}

\begin{lem}\label{lemma-bands}
If $\Lambda$ is a string algebra, then no band module can lie on a maximal green sequence for $\Lambda$. 
\end{lem}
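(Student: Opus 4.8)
The plan is to show that a band module $M(w,\lambda,k)$ can never be the brick attached to a covering relation in $\textsf{tors}\Lambda$, which by Theorem~\ref{MGS_thm} suffices. The cleanest route is to observe that the bricks arising from covering relations must in particular be bricks, i.e. have a division ring as endomorphism algebra, and then to show that \emph{no} band module is a brick. Indeed, for a band $w$ and any $\lambda \in K^*$, $k \ge 1$, one has natural inclusions/projections $M(w,\lambda,k') \hookrightarrow M(w,\lambda,k)$ and $M(w,\lambda,k) \twoheadrightarrow M(w,\lambda,k'')$ coming from the Jordan block structure, so already for $k \ge 2$ the module $M(w,\lambda,k)$ has a nonzero nilpotent endomorphism (the one induced by a nilpotent endomorphism of the $J_k(\lambda^{\varepsilon_d})$-module structure), hence is not a brick. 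For $k=1$, the issue is subtler: $M(w,\lambda,1)$ for a band $w$ still admits a nonzero non-invertible endomorphism — e.g. the endomorphism that "rotates" along the band or, more simply, one uses the fact that $\dim_K \operatorname{End}_\Lambda(M(w,\lambda,1)) \ge 2$ because the all-identity map and a suitable shift map along the cyclic string both lie in the endomorphism ring and are $K$-linearly independent. (Concretely, one can exhibit a non-isomorphism self-map using the morphism-counting Proposition above: a band $w$ is a substring of itself in two genuinely different ways — as the whole string and via a cyclic rotation landing in the middle — producing at least two linearly independent elements of $\operatorname{Hom}_\Lambda(M(w,\lambda,1),M(w,\lambda,1))$, one of which is not an isomorphism.) Either way, $M(w,\lambda,k)$ is not a brick, so it cannot be the brick labeling any covering relation, and in particular cannot appear in a complete FHO sequence / maximal green sequence.

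In more detail, the steps I would carry out are: (1) Recall from the discussion preceding the lemma that every module appearing on a maximal green sequence is, by Theorem~\ref{MGS_thm}, a brick — in fact the unique smallest brick quotient of an indecomposable $\tau$-rigid summand. (2) Fix a band $w = \gamma_1^{\varepsilon_1}\cdots\gamma_d^{\varepsilon_d}$ and the band module $M = M(w,\lambda,k)$ for arbitrary $\lambda \in K^*$, $k \ge 1$. (3) Produce an explicit nonzero endomorphism $f$ of $M$ that is not an isomorphism. For $k \ge 2$ take $f$ built from a nilpotent $k\times k$ matrix commuting with $J_k(\lambda^{\varepsilon_d})$ (e.g. $J_k(0)$ itself up to the eigenvalue shift), placed compatibly at every vertex; one checks $f$ commutes with all $\varphi_\alpha$ since the $\varphi_\alpha$ are either $I_k$ or $J_k(\lambda^{\varepsilon_d})$, and $f$ is visibly nilpotent, hence non-invertible and nonzero. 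For $k = 1$, invoke the morphism-counting proposition (or a direct linear-algebra computation) to see $\dim_K\operatorname{End}_\Lambda(M) \ge 2$, so $\operatorname{End}_\Lambda(M)$ is a commutative local finite-dimensional $K$-algebra of dimension $\ge 2$ and therefore not a division ring. (4) Conclude that $M(w,\lambda,k)$ is not a brick, hence by step (1) it cannot lie on a maximal green sequence.

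The main obstacle is the $k=1$ case: for $k \ge 2$ the nilpotent endomorphism is immediate, but when $k=1$ one must genuinely use that $w$ is a \emph{band} (a primitive cyclic string), not merely a string, to produce a second, non-invertible endomorphism — this is where the primitivity and cyclicity of $w$ enter, via the two distinct ways $w$ embeds into itself as a substring realizing $M(u)$ as both a quotient and a submodule. Accordingly I expect most of the work to be the bookkeeping in the Proposition's substring count (identifying the "rotation" substring $u$ of $w$ with itself and checking the quotient/submodule sign conditions at the cut points), together with verifying that the resulting morphism is indeed not an isomorphism. Alternatively, and perhaps more cleanly, one can sidestep this by noting that $M(w,\lambda,1) \twoheadrightarrow M(w')$ for a proper string substring $w'$ of $w^2$ that is strictly smaller, giving directly a strictly smaller quotient that still contains the relevant brick, contradicting minimality; but the endomorphism argument is the most self-contained and is the one I would write up.
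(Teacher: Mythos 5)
Your argument for $k \ge 2$ is fine and matches the paper (a nilpotent endomorphism coming from the Jordan block, with image $M(w,\lambda,1)$). But for $k=1$ your proposed strategy — showing that $M(w,\lambda,1)$ is not a brick — is simply false: for a \emph{primitive} band $w$, the module $M(w,\lambda,1)$ \emph{is} a brick. Indeed, each vertex space is $K^1 = K$ (or a direct sum of copies of $K$ at repeat-visited vertices), and any endomorphism is a tuple of scalars which must all be equal since the nonzero $\varphi_\alpha$ are nonzero scalars and the band is connected; one checks $\operatorname{End}_\Lambda(M(w,\lambda,1)) = K$. (Already in the Kronecker example in the paper, $M(\beta^{-1}\alpha,\lambda,1)$ is $K \rightrightarrows K$ with maps $1$ and $\lambda$, which is manifestly a brick.) The "rotation'' endomorphism you describe does not exist on the band module; you are implicitly applying the substring morphism-counting proposition, but that proposition is stated for \emph{string} modules $M(w)$, and the band module $M(w,\lambda,1)$ is a different module from the string module $M(w)$. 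So the claim $\dim_K\operatorname{End}_\Lambda(M(w,\lambda,1)) \ge 2$ fails and your step (3) collapses for $k=1$.

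The paper handles $k=1$ by an entirely different mechanism that you do need: since $\tau M(w,\lambda,k) = M(w,\lambda,k)$, band modules are never $\tau$-rigid, hence never summands of a support $\tau$-tilting module. If $M(w,\lambda,1)$ were the brick attached to a covering relation $\mathcal{T}(N_1) \lessdot \mathcal{T}(N_2)$, it would be the unique smallest brick quotient of the indecomposable $X \in \operatorname{add} N_2 \setminus \operatorname{add} N_1$; but $X$ must be $\tau$-rigid and hence a \emph{string} module. The structure maps of $X$ consist of $0$'s and $1$'s, so the surjection $f_\lambda \colon X \twoheadrightarrow M(w,\lambda,1)$ has matrix entries depending polynomially on $\lambda$, and replacing $\lambda$ by any $\lambda'$ produces another surjection $f_{\lambda'} \colon X \twoheadrightarrow M(w,\lambda',1)$ — contradicting the \emph{uniqueness} of the smallest brick quotient. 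This parameter-variation argument, not a failure of the brick property, is what rules out $k=1$.
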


\begin{proof}
Let $M(w, \lambda, k)$ be a band module in $\Lambda$-mod.  If $k\geq 2$ then $M(w, \lambda, k)$ is not a brick, as there exists $f\in \text{End} \,M(w, \lambda, k)$ with image $M(w, \lambda, 1)$.  Therefore, it suffices to consider the case $k=1$.   We also note that band modules are not $\tau$-rigid as it is well-known that $\tau M(w, \lambda, k)=M(w, \lambda, k)$ \cite{butler1987auslander}.   This implies that band modules cannot appear as summands of a support $\tau$-tilting module.

%The Butler--Ringel result is on page 165-166.

%Thus, the band $M(w, \lambda)$ is a quotient of string modules in $\text{add}\,M$.  Observe that by choosing appropriate coefficients we can also construct another map such that $M(w, \lambda')$ is also generated by $M$ for any $\lambda \in K\setminus\{0\}$.  This shows that if $M(w, \lambda)$ belongs to some functorially finite torsion class then so do all other bands supported on $w$.  

Now suppose $M(w, \lambda, 1)$ lies on a maximal green sequence for some $\lambda\in K^*$.  Then there exist two functorially finite torsion classes $\mathcal{T}(N_1)\lessdot \mathcal{T}({N}_2)$ such that $X$ is a nonzero indecomposable module in $\text{add}\,N_2 \setminus \text{add}\,N_1$ and $M(w, \lambda,1)$ is the unique smallest quotient of $X$.  Let $f_{\lambda}: X \to M(w,\lambda, 1)$ denote this surjection, and note that because $X$ is a summand of a support $\tau$-tilting module $N_2$ it is $\tau$-rigid.  Therefore, $X$ is a string module.  In particular, $f_{\lambda}$ is a morphism of quiver representations where the linear maps in the quiver representation $X$ consist of multiplication by 0's and 1's while the linear maps in $M(w,\lambda, 1)$ consist of multiplication by 0's, 1's, and $\lambda$'s.   Therefore, $f_{\lambda}$ is a collection of linear maps with entries given by the parameter $\lambda$.   Let $f_{\lambda'}: X\to M(w, \lambda', 1)$ be the map obtained from $f_{\lambda}$ by replacing the parameter $\lambda$ with $\lambda'$.  It follows that $f_{\lambda'}$ is also a surjective morphism of quiver representations.  However, this contradicts uniqueness and minimality of the brick $M(w, \lambda, 1)$.
\end{proof}

%\textcolor{red}{What is meant by ``changing the map'' in the previous proof?}. \textcolor{blue} {I changed the wording, I hope this makes more sense now?}

By previous lemma it suffices to consider string modules when discussing maximal green sequences for string algebras.

Let $w$ be a band of length $n$.  Observe that every substring of $w^{N}$ may be expressed uniquely as $u_1^kv_1$ and $v_2u_2^k$ for some bands $u_1, u_2$ equivalent to $w$,  $k\geq 0$, and substrings $v_1,v_2$ of $u_1, u_2$ respectively of length less than $n$.
Given a band $w$, we say that a string $\gamma$ (or equivalently the string module $M(\gamma)$) is \textit{supported on $w^k$} if $u^k$  is a substring of $\gamma$ for some $u\sim w$ and $k\geq 1$.   

%\textcolor{red}{Should we instead say ``supported on a band $w$ if there exists $u \sim w$  and $k \geq 1$ such that $u^k$ is a substring of $\gamma$''? Actually, $w^k$ is not a band, but $w$ is.}. {\color{blue} you are right, so maybe we need a better term for this, but I did want to have a $k$ somewhere because for certain results we need $k\geq 2$ but for others $k\geq 1$ is sufficient.} \textcolor{red}{Maybe we could just say something like, ``given a band $w$, we say that a string $\gamma$ (or equivalently the string module $M(\gamma)$) is \textit{supported on $w^k$} if $u^k$  is a substring of $\gamma$ for some $u\sim w$ and $k\geq 1$''. This seems correct to me, and it makes use of $k$.}

Let $\gamma = \gamma_1^{\varepsilon_1}\cdots\gamma_d^{\varepsilon_d}$ be a string and $\epsilon=\gamma_i^{\varepsilon_i}\cdots \gamma_j^{\varepsilon_j}$ be a substring of $\gamma$ for some $1 \le i \le j \le d$.  Then $\epsilon$ is called a {\it maximal $w$-substring} of $\gamma$ if $\epsilon$ is a substring of $w^N$ supported on $w$ and neither $\gamma_{i-1}^{\varepsilon_{i-1}}\epsilon$ nor $\epsilon\gamma_{j+1}^{\varepsilon_{j+1}}$ are substrings of $w^{N}$ for $i>1, j<d$ and $N$ large enough.  

{
\begin{ex}
In algebra $\Lambda$ from Example~\ref{example2}, the string $\gamma_2^1\beta_2^1\alpha_2^{-1}\gamma_2^1\beta_1^{-1}$
is supported on the band $w_2= \beta_2^1\alpha_2^{-1}\gamma_2^1$  but not $w_2^2$.  Moreover, $ \gamma_2^1\beta_2^1\alpha_2^{-1}\gamma_2^1$ is a maximal $w_2$-substring of $\gamma_2^1\beta_2^1\alpha_2^{-1}\gamma_2^1\beta_1^{-1}$.
\end{ex}
}

With this notation we have the following lemma.

%pose $M(\gamma)$ is supported on a band $w$, then $\rho$ is a {\it maximal $w$-substring} of $\gamma$ if $\gamma=\gamma_1\beta_1^{\epsilon_1}\rho\beta_2^{\epsilon_2}\gamma_2$, where $\gamma_1,\gamma_2$ are strings, $\beta_1,\beta_2$ are arrows, and neither $\beta_1\epsilon$ nor $\epsilon\beta_2$ are substrings of $w^{N}$ for any $N\geq 1$, provided $\gamma_1\beta_1^{\epsilon_1}, \beta_2^{\epsilon_2}\gamma_2$ are nontrivial.   

\begin{lem}\label{lemma1}
Let $\Lambda$ be a string algebra with $M\in \Lambda$-mod.  Suppose $M=M(\gamma)$ is a brick supported on $w$, where $w$ is a band.  Let $\epsilon$ be a maximal $w$-substring of $\gamma$, then $M(\epsilon)$ is a submodule or a quotient of $M$. 
\end{lem}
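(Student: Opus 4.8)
The plan is to analyze the string $\gamma$ near the maximal $w$-substring $\epsilon$ and show that $\epsilon$ must occur at one of the endpoints of $\gamma$ or in a position where the adjacent symbols force $M(\epsilon)$ to be a submodule or quotient. Write $\gamma = \gamma_1^{\varepsilon_1}\cdots\gamma_d^{\varepsilon_d}$ and $\epsilon = \gamma_i^{\varepsilon_i}\cdots\gamma_j^{\varepsilon_j}$. First I would dispose of the boundary cases: if $i = 1$ and $j = d$ then $\gamma = \epsilon$ and there is nothing to prove; if $i = 1$ but $j < d$, then $M(\epsilon)$ is automatically a quotient of $M$ when $\varepsilon_{j+1} = 1$ and a submodule when $\varepsilon_{j+1} = -1$ (using the quotient/submodule criterion recalled just before Section~\ref{sec:bricks}), and symmetrically for $j = d$, $i > 1$. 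So the real content is the case $1 < i \le j < d$, where $\epsilon$ sits strictly in the interior of $\gamma$.

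In that interior case I want to derive a contradiction with $M(\gamma)$ being a brick. Since $\epsilon$ is a \emph{maximal} $w$-substring, the symbol $\gamma_{i-1}^{\varepsilon_{i-1}}$ is the unique symbol that could precede $\epsilon$ inside some power $w^N$ but does not (and similarly on the right with $\gamma_{j+1}^{\varepsilon_{j+1}}$); by the string-algebra conditions (S1), (S2) together with condition (1) in the definition of a string, at each vertex there is very little freedom, so "$\gamma_{i-1}^{\varepsilon_{i-1}}\epsilon$ is not a substring of $w^N$" forces $\gamma_{i-1}^{\varepsilon_{i-1}}$ to be, up to inverse, the specific arrow that continues $w$ in the opposite direction — i.e. $\epsilon$ extends to a longer $w$-power-substring only by going "backwards," which is exactly the deleted-inverse situation, so instead the string $\gamma$ must turn. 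The key combinatorial claim I would isolate is: if $\epsilon$ is a maximal $w$-substring sitting in the interior, then either $\varepsilon_{i-1} = 1$ and $\varepsilon_{j+1} = -1$ (so $M(\epsilon)$ is a quotient of $M$), or $\varepsilon_{i-1} = -1$ and $\varepsilon_{j+1} = 1$ (so $M(\epsilon)$ is a submodule of $M$), or else one can produce a nonzero non-isomorphism endomorphism of $M(\gamma)$, contradicting brickness. The mixed cases $\varepsilon_{i-1} = \varepsilon_{j+1}$ are where $M(\epsilon)$ is neither sub nor quotient, and these are precisely the ones I must rule out: in such a case I would use the Hom-formula proposition to build a map $M(\gamma) \to M(\gamma)$ factoring through $M(u)$ for a suitable substring $u$ of the repeating band pattern — roughly, the band structure gives two occurrences of a common substring $u$ of $w$ inside $\gamma$ (one "near" $\epsilon$, one produced by the periodicity), with $M(u)$ a quotient at one occurrence and a submodule at another, yielding a nonzero endomorphism whose image is the proper submodule $M(u)$, hence not an isomorphism.

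The main obstacle I anticipate is the bookkeeping in that last step: making precise which substring $u$ of $w$ to use and verifying, from the explicit shape of $w^N$ and the maximality of $\epsilon$, that $u$ genuinely appears twice in $\gamma$ with the correct "quotient at the source, submodule at the target" configuration needed for the Hom-formula. This requires carefully tracking the orientations $\varepsilon_\bullet$ along the band: a band, being a cyclic string, alternates between "peaks" and "valleys," and I would exploit that the symbols immediately inside $\epsilon$ at its two ends (namely $\gamma_i^{\varepsilon_i}$ and $\gamma_j^{\varepsilon_j}$) together with the forbidden extensions $\gamma_{i-1}^{\varepsilon_{i-1}}$, $\gamma_{j+1}^{\varepsilon_{j+1}}$ pin down whether $\epsilon$ begins/ends at a peak or valley of the band. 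Once that local picture is fixed, the three cases fall out: valley–peak gives a quotient, peak–valley gives a submodule, and peak–peak or valley–valley is where the periodicity of $w$ forces a repeated internal substring and hence a non-trivial endomorphism. I would organize the write-up around this trichotomy, handling the two "good" cases by direct appeal to the sub/quotient criteria and the two "bad" cases by the brick contradiction.
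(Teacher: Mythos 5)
Your proposal follows essentially the same route as the paper's own proof: reduce to the case where $\epsilon$ sits strictly in the interior of $\gamma$, observe that if the two flanking letters $\gamma_{i-1}^{\varepsilon_{i-1}}$, $\gamma_{j+1}^{\varepsilon_{j+1}}$ have opposite signs then $M(\epsilon)$ is a submodule or a quotient directly, and rule out the remaining case $\varepsilon_{i-1} = \varepsilon_{j+1}$ by producing a nonzero non-invertible endomorphism of $M(\gamma)$. One bookkeeping slip to fix: with the conventions recalled just before Section~\ref{sec:bricks}, $M(\epsilon)$ is a \emph{quotient} when $\varepsilon_{i-1} = -1$ and $\varepsilon_{j+1} = 1$, and a \emph{submodule} when $\varepsilon_{i-1} = 1$ and $\varepsilon_{j+1} = -1$; you have these swapped. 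This does not alter your case split, since the obstruction cases are still $\varepsilon_{i-1} = \varepsilon_{j+1}$.

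The one step your sketch leaves open is the one where the proof actually lives, and it deserves to be made explicit rather than treated as bookkeeping. Write $\epsilon = u^k v$ with $u\sim w$, $k\geq 1$, and $v$ a proper prefix of $u$; the correct ``repeated substring'' is $v$ itself, which occurs at the very start of $\epsilon$ (inside the first copy of $u$) and again at the very end of $\epsilon$. In the mixed case, say when both flanking letters $\beta_1^{\varepsilon_1},\beta_2^{\varepsilon_2}$ are direct arrows, maximality of $\epsilon$ excludes the continuations $\alpha_{p+1}^{\varepsilon_{p+1}}=\beta_2$ and $\alpha_n^{\varepsilon_n}=\beta_1$, and it is then the string-algebra axioms, in particular (S2) (uniqueness of composable arrows) together with the degree bound (S1), that force $\alpha_{p+1}^{\varepsilon_{p+1}}$ and $\alpha_n^{\varepsilon_n}$ to be inverse arrows. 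This is what pins $M(v)$ down as simultaneously a submodule (via the copy of $v$ at the start of $\epsilon$) and a quotient (via the copy at the end) of $M(\gamma)$, giving the nonzero non-invertible endomorphism. A ``periodicity of the band'' argument alone, without (S1)/(S2), would not determine these orientations, so you should be explicit that the string-algebra constraints are doing the work here.
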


\begin{proof}
By definition of $\epsilon$ being a maximal $w$-substring of $\gamma$, we may write $\epsilon=u^kv$ where $u\sim w$, $k\geq 1$, and $v$ is a proper substring of $u$.  Suppose $u=\alpha_1^{\varepsilon_1}\dots \alpha_n^{\varepsilon_n}$ and $v=\alpha_1^{\varepsilon_1} \dots \alpha_p^{\varepsilon_p}$ for some $p\in [0,n-1]$, where $p=0$ implies $v=e_j$ for some vertex $j$ is a string of length zero.  %{\color{red}and $k = 1$. I don't think $k=1$ necessarily if $p=0$.  For example if $\epsilon=w^k$ then $u=w$, $p=0$, $v=e_j$, and $k$ can be anything. } \textcolor{blue}{Ok, you are definitely right. However, I think I would like to disallow writing expressions like $w^m$ where $m \ge 2$ and where $w = e_i$ for some vertex $i$. I added a sentence in blue about this above. What do you think?}  {\color{red} let's do that}
%Thus, $u$ is a string that starts and ends in some vertex $a$, and $v$ is a string that starts at $a$ and ends in some vertex $b$. 
Moreover, $\gamma=\gamma_1 \beta_1^{\varepsilon_1} \epsilon \beta_2^{\varepsilon_2} \gamma_2$ for some strings $\gamma_1, \gamma_2$ and arrows $\beta_1, \beta_2$.   Note that if $\epsilon$ appears at the beginning or the end of $\gamma$ then some of $\gamma_1\beta_1^{\varepsilon_1}$, $\beta_2^{\varepsilon_2}\gamma_2$ may be of length zero. 

If at least one of them has length zero, then the conclusion holds.  Thus, suppose that both $\gamma_1\beta_1^{\varepsilon_1}$, $\beta_2^{\varepsilon_2}\gamma_2$ are nontrivial. If $\beta_1^{\varepsilon_1}=\beta_1$ is an arrow then it suffices to show that $\beta_2^{\varepsilon_2}=\beta_2^{-1}$ is an inverse arrow, as this implies that $M(\epsilon)$ is a submodule of $M$.  The case when $\beta_1^{\varepsilon_1}=\beta_1^{-1}$ is an inverse arrow follows in a similar way, so we omit the detailed discussion. 

Suppose on the contrary that both $\beta_1^{\varepsilon_1}, \beta_2^{\varepsilon_2}$ are arrows.  
In particular, we have {that the diagram of $\gamma$ is of the following form}
\[\begin{tikzcd}
s(\gamma_1) \cdots t(\gamma_1) \arrow[dr, start anchor = south east, end anchor = north west, "\beta_1"] \\
& \underbrace{a \cdots a}_u \cdots \underbrace{a \cdots a}_{u} \cdots b \arrow[dr, start anchor = {[yshift = 3ex, xshift = 1.5ex]}, end anchor = north west, "\beta_2"] \\
& & s(\gamma_2) \cdots t(\gamma_2)
  \end{tikzcd}\]
where $t(\beta_1)=a$ and $s(\beta_2)=b$ {and the sequence $a \cdots b$ represents the string $v$}.  Since $\epsilon$ is a maximal substring of $\gamma$ that is also a substring of $w^N$, we conclude that $\alpha_{p+1}^{\varepsilon_{p+1}}\not=\beta_2$ and $\alpha_n^{\varepsilon_n}\not=\beta_1$.  Moreover,  $\alpha_p^{\varepsilon_p}\beta_2$ and $\beta_1\alpha_1^{\varepsilon_1}$ are substring of $\gamma$ which implies $\alpha_p^{\varepsilon_p} \not= \beta_2^{-1}$ and    $\alpha_1^{\varepsilon_1}\not=\beta_1^{-1}$.

%$$^{{\searrow}^{\beta_1}} \,\, \underbrace{a \cdots a}_{u} \cdot \cdots \underbrace{a \cdots b}_{v}    \,\,_{{\searrow}^{\beta_2}} $$ 

%\[\xymatrix@C=20pt@R=20pt{ 
%s(\gamma_1) & \cdots &  t(\gamma_1) \ar[dr]^{\beta_1} & & & &  \\
%& &  & & 3 & & 2}\]

If $\alpha_{p+1}^{\varepsilon_{p+1}} =\alpha_{p+1}$ is an arrow, then $\alpha_{p+1}$ and $\beta_2$ are both arrows that start at $b$.  Because $\Lambda$ is a string algebra there are no other arrows starting in $b$.  %This implies that $\alpha_{p}^{\varepsilon_p}$ which cannot equal $\beta_2^{-1}$ or $\alpha_{p+1}^{-1}$ cannot start in $b$. \textcolor{red}{Sorry, I was reading through this again, and I can't seem to figure out how we know $\alpha_{p+1}^{-1}$ cannot start in $b$. I think I am just generally confused by the previous sentence. I'm wondering if we can just delete the previous sentence; we have already explained that both $\alpha_{p+1}^{\varepsilon_{p+1}}$ is an arrow and so is $\beta_2$. Since they both start at $b$, it seems that $\alpha_p^{\varepsilon_p}$ must be an arrow that ends at $b$.}  {\color{blue} fine by me, as long as it makes sense to you.} 
Therefore, $\alpha_p^{\varepsilon_p}=\alpha_p$ is an arrow ending in $b$.   Then $\alpha_p\alpha_{p+1}$ and $\alpha_p\beta_2$ are both substrings of $\gamma$, which is impossible because $\Lambda$ is a string algebra.  Hence, $\alpha_{p+1}^{\varepsilon_{p+1}}=\alpha_{p+1}^{-1}$ and $\beta_1 v  \alpha_{p+1}^{-1}$ is a substring of $\gamma$.  This shows that $M(v)$ is a submodule of $M$.  

%Since $\Lambda$ is a string algebra there are at most two arrows starting at $b$ and because $\alpha_p\not=\beta_2^{-1}$, we conclude that $\alpha_{p+1}$ is an inverse arrow ending in $b$.  Therefore, $M(v)$ is a submodule of $M=M(\gamma_1\beta_1\cdot v \cdot \alpha_{p+1}\dots \alpha_n u^{k-1}v\beta_2\gamma_2)$. 

%Similarly, since $\alpha_{n}\not=\beta_1$ and there are at most two arrows ending at $a$ we conclude that $\alpha_n$ is an inverse arrow starting at $a$. 

Similarly, if $\alpha_n^{\varepsilon_n}=\alpha_n$  is an arrow, then $\alpha_n$ and $\beta_1$ are both arrows ending at $a$. Because $\Lambda$ is a string algebra there are no other arrows ending at $a$.   %\textcolor{red}{For the same reasons I mentioned in the previous paragraph, I think we can delete the following sentence.} {\color{blue} sounds good!} 
%This implies that $\alpha_1^{\varepsilon_1}$ which cannot equal $\beta_1^{-1}$ or $\alpha_n^{-1}$ cannot end at $a$.  
Therefore, $\alpha_1^{\varepsilon_1}=\alpha_1$ is an arrow starting at $a$.  Then $\beta_1\alpha_{1}$ and $\alpha_n\alpha_1$ are both substrings of $\gamma$, which is impossible because $\Lambda$ is a string algebra. 
Therefore, $\alpha_n^{\varepsilon_n}=\alpha_n^{-1}$ is an inverse arrow and $ \alpha_n^{-1} v  \beta_2$ is a substring of $\gamma$.  This shows that $M(v)$ is also quotient of $M$, which yields a nonzero element of $\text{End} \, M$ with image $M(v)$.  This contradicting the assumption that $M$ is a brick and proves the lemma.
%\textcolor{red}{I am confused by this sentence. I thought I understood it, but when I read the proof of Lemma 2.3, I realized I was wrong.}  {\color{blue} I changed the wording, it should read better now}    %Note that in the case when $v$ has length zero, we obtain that $M(v)$ is the simple module at vertex $a$ that is both a submodule and a quotient of $M$.  
  \end{proof}

Given a string $\gamma = \alpha_1^{\varepsilon_1}\dots \alpha_d^{\varepsilon_d}$ for $d\geq 1$, we say that $\gamma$ is {\it directed} if $\varepsilon_i=\varepsilon_j$ for all $1\leq i,j\leq d$.  Otherwise, we say that $\gamma$ is {\it undirected}. A string module $M(\gamma)$ is {\it uniserial} if $\gamma$ is directed.

\begin{lem}\label{lemma2}
If $u$ is an undirected string such that $u^2$ {is a string}, then $u=w^k$ for some band $w$ and $k\geq 1$. 
\end{lem}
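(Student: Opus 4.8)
The plan is to reduce the statement to the claim that $u^m$ is a string for every $m \ge 1$, and then to produce the desired band as the primitive root of $u$.

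First I would record what the hypotheses immediately give. Since $u^2$ is a string, the compatibility condition~(1) at the junction between the two copies of $u$ forces $s(u) = t(u)$, so $u$ is a cyclic string; moreover $u$ has length at least $2$, because a string of length $1$ is directed. Writing $u = \gamma_1^{\varepsilon_1}\cdots\gamma_d^{\varepsilon_d}$, recall that $u$ is a string by hypothesis, so conditions~(1) and~(2) hold within each copy of $u$ inside $u^m$, and every junction $\gamma_d^{\varepsilon_d}\gamma_1^{\varepsilon_1}$ occurring in $u^m$ is exactly the one already occurring in $u^2$.

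The main step is the claim: \emph{if $u$ is undirected and $u^2$ is a string, then $u^m$ is a string for all $m \ge 1$.} Condition~(1) for $u^m$ follows from the previous paragraph. For condition~(2), consider a maximal directed substring of $u^m$ or of $(u^m)^{-1}$, that is, a maximal run of consecutive symbols all lying in $Q_1$ or all lying in $Q_1^{-1}$. Because $u$ is undirected, no full copy of $u$ inside $u^m$ is directed, so such a run---being an interval of positions---can meet at most two consecutive copies of $u$; after translating by a multiple of $d$ it is therefore a directed substring of $u^2$. Since $u^2$ is a string, no such substring is a subpath lying in $I$, and hence neither $u^m$ nor $(u^m)^{-1}$ contains a subpath in $I$. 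I expect this to be the only real obstacle, and it is exactly where the hypothesis is used: were $u$ directed, $u^m$ would contain arbitrarily long directed paths, which must eventually lie in the admissible ideal $I$.

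To finish, let $w$ be the primitive root of $u$ regarded as a word in $Q_1 \sqcup Q_1^{-1}$, so that $u = w^k$ for some $k \ge 1$ with $w$ primitive. Then $w$ is cyclic, since it shares its endpoints with $u$; and for every $m$ the power $w^m$ is an initial substring of $u^{m} = w^{km}$, which is a string by the main step, so $w^m$ is a string. Thus $w$ is a primitive cyclic string all of whose powers are strings, i.e.\ a band, and $u = w^k$ has the required form. The remaining points---that substrings of strings are strings, and the standard uniqueness of the primitive root of a word---are routine.
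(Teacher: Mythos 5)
Your proof is correct, and it takes a genuinely different route from the paper's. The paper first rotates $u$ to a cyclic conjugate $u' = \alpha_{j+1}^{\varepsilon_{j+1}}\cdots\alpha_d^{\varepsilon_d}\alpha_1^{\varepsilon_1}\cdots\alpha_j^{\varepsilon_j}$, choosing the rotation point $j$ to sit at a sign change $\varepsilon_j\varepsilon_{j+1}=-1$; then $(u')^m$ is automatically a string for all $m$ because neither an inverse-cancellation nor a directed relation can straddle the sign-changing junction, and $u^m$ is recovered as a substring of $(u')^{m+1}$. You instead check the two string axioms on $u^m$ directly: condition~(1) is local to adjacent letters and already verified in $u^2$, and for condition~(2) you observe that because no full copy of $u$ is directed, any maximal directed run in $u^m$ fits inside two consecutive copies, hence translates to a directed substring of $u^2$. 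Both arguments hinge on the same structural fact — that generators of $I$ in a string algebra are directed paths, and undirectedness of $u$ confines directed runs — but the paper's rotation trick packages this into a single ``good junction,'' whereas yours argues about all junctions at once. Your version is slightly longer but avoids having to introduce the auxiliary string $u'$ and reason about why $u^m$ sits inside $(u')^{m+1}$; the paper's is more compact once the rotation is set up. Both correctly reduce to the observation that it suffices to show $u^m$ is a string for every $m$, and both close by extracting the primitive root.
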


\begin{proof}
The string $u$ starts and ends at the same vertex, because $u^2$ {is a string}.  Therefore, to show that $u$ is a power of a (primitive) band $w$ it suffices to check that $u^m$ {is a string}  for all $m\geq 1$.

Let $u=\alpha_1^{\varepsilon_1}\dots \alpha_d^{\varepsilon_d}$ for $d\geq 1$.  Since $u$ is undirected, $d\geq 2$ and there exists $j$ for $1\leq j \leq d-1$ such that $\varepsilon_j \varepsilon_{j+1}=-1$.   Then $u'=\alpha_{j+1}^{\varepsilon_{j+1}} \dots \alpha_d^{\varepsilon_d}\alpha_1^{\varepsilon_1}\dots\alpha_{j}^{\varepsilon_{j}}$ {is a string}   because it is a substring of $u^2$.  Moreover, since $\varepsilon_j$ and $\varepsilon_{j+1}$ are {not equal}, it follows that the composition $(u')^m$ {is a string} for all $m\geq 1$.  This implies that $u^m$ is also {a string} for all $m\geq 0$, as it is a substring of $(u')^{m+1}$. 
\end{proof}

Let $w$ be a band.  The next lemma says that a module $M(\epsilon)$ coming from a maximal $w$-subword of a brick module is again a brick.  It is interesting to note that the proof is much more technical for the case when the subword contains only one copy of $w$ and requires additional assumptions.

\begin{lem}\label{lemmaBB}
Let $\Lambda$ be a string algebra with $M\in \Lambda$-mod.  Suppose $M=M(\gamma)$ is a brick supported on $w^k$ for a band $w$ and $k\geq 1$.  Let $\epsilon$ be a maximal $w$-substring of $\gamma$.  If one of the following conditions is satisfied 
\begin{itemize}
\item[(B1)] $k\geq 2$
\item[(B2)] $w$ is minimal, $M(\epsilon)$ is a submodule (resp., quotient) of $M$ and a quotient (resp., submodule) of some brick string module $M(\gamma')$
\end{itemize}
then $M(\epsilon)$ is a brick. 
\end{lem}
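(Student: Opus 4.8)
The plan is to prove the contrapositive: assuming $M(\epsilon)$ is not a brick, I will produce a nonzero non-isomorphism endomorphism of $M = M(\gamma)$, contradicting that $M$ is a brick. Since $M(\epsilon)$ is not a brick, by the proposition on homomorphisms between string modules there is a proper substring $\delta$ of $\epsilon$ with $M(\delta)$ simultaneously a quotient and a submodule of $M(\epsilon)$. Because $\epsilon$ is a substring of $w^N$ supported on $w^k$, the string $\delta$ is itself a substring of $w^N$; and since $M(\delta)$ is both a sub and a quotient of $M(\epsilon)$, the arrow/inverse-arrow letters of $\gamma$ bounding $\epsilon$ on either side combine correctly with $\delta$ to show $M(\delta)$ is a quotient of $M$ when we approach from the "quotient end" of $\epsilon$ and a submodule of $M$ from the other. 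The key combinatorial point here is exactly the one already extracted in the proof of Lemma~\ref{lemma1}: a maximal $w$-substring $\epsilon$ sits inside $\gamma$ flanked by one arrow going "into" $\epsilon$ and one arrow going "out", so that $M(\epsilon)$ (hence any sub-or-quotient-compatible $M(\delta)$) inherits the sub/quotient structure it has inside $\epsilon$. This yields a nonzero endomorphism of $M$ with image $M(\delta)$ which is not an isomorphism since $\delta$ is a proper substring — the contradiction. This argument handles case (B1) using only that $k\ge 2$, roughly as follows: when $k\ge 2$, the string $\epsilon = u^kv$ with $u\sim w$ has enough "room" that the flanking letters $\beta_1,\beta_2$ of $\gamma$ around $\epsilon$ are forced (as in Lemma~\ref{lemma1}) to make $M(\epsilon)$ a sub or quotient of $M$, and moreover any proper substring $\delta$ realizing the failure of brickhood of $M(\epsilon)$ can be chosen to lie within a single $u^{k-1}$-block or to be $u$-periodic, so $\delta$ is again a sub/quotient of $M$.

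For case (B2), the extra hypotheses are needed precisely because with only one copy of $w$ the flanking-letter analysis of Lemma~\ref{lemma1} no longer pins down the structure at both ends of $\epsilon$. Here I would argue as follows. By Lemma~\ref{lemma1}, $M(\epsilon)$ is a submodule or a quotient of $M$; say (by symmetry, exchanging $\gamma$ with $\gamma^{-1}$) it is a submodule. By (B2) it is also a quotient of some brick string module $M(\gamma')$. Suppose $M(\epsilon)$ is not a brick; then as above there is a proper substring $\delta$ of $\epsilon$, a substring of $w^N$, with $M(\delta)$ both a submodule and a quotient of $M(\epsilon)$. The claim is that one of the following happens: either $M(\delta)$ is also a submodule of $M$ — giving directly a nonzero non-iso endomorphism of $M$ and contradicting that $M$ is a brick — or else the obstruction to $M(\delta)\hookrightarrow M$ comes from the arrow of $\gamma$ adjacent to the non-submodule end of $\epsilon$, and in that case $M(\delta)$, being a quotient of $M(\epsilon)$ which is in turn a quotient of $M(\gamma')$, is a quotient of $M(\gamma')$; combined with $M(\delta)$ a submodule of $M(\epsilon)$ (hence of $M(\gamma')$ via the appropriate end), this contradicts that $M(\gamma')$ is a brick. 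The minimality of $w$ enters to guarantee that the periodic substring $\delta$ is genuinely "$v$-type" or "$u$-type" and not built from a shorter band hiding inside $w$, so that the sub/quotient compatibility statements at the ends of $\epsilon$ are the only constraints and the dichotomy above is exhaustive.

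The main obstacle I expect is the bookkeeping in case (B2): precisely matching which end of $\epsilon$ carries the "submodule" structure versus the "quotient" structure, and checking that the failure of $M(\delta)$ to embed in $M$ can always be blamed on the correct flanking arrow so that the fallback to the brick $M(\gamma')$ actually applies. This requires a careful case split on whether the proper substring $\delta$ of $\epsilon=u^kv$ touches the left end $u\cdots$, the right end $\cdots v$, or sits strictly interior, together with the local string-algebra conditions (S1),(S2) at the vertices where $\beta_1,\beta_2$ attach, exactly in the style of the argument already given for Lemma~\ref{lemma1}. The $k\ge 2$ case of (B1) should by contrast be relatively painless, since the abundance of copies of $u$ both forces the flanking arrows and provides the needed $u$-periodic sub/quotient of $M$ directly.
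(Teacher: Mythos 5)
Your high-level framing — find a proper substring $\delta$ of $\epsilon$ with $M(\delta)$ a sub and a quotient of $M(\epsilon)$, and then transfer this to a contradiction with $M$ or $M(\gamma')$ being a brick — matches the shape of the paper's argument, but the transfer step is exactly the nontrivial content of the lemma and your sketch does not actually supply it. You assert that $M(\delta)$ "inherits the sub/quotient structure it has inside $\epsilon$" and thereby gives a nonzero endomorphism of $M$. But $M(\epsilon)$ is a \emph{submodule} of $M$, so a submodule of $M(\epsilon)$ is a submodule of $M$, while a \emph{quotient} of $M(\epsilon)$ is merely a subquotient of $M$, not a quotient. Getting $M(\delta)$ to also be a quotient of $M$ is precisely what the paper has to work for: it observes that $\mathrm{im}\,f$, viewed as a quotient of $M(\epsilon)$, must be supported at the \emph{beginning or end} of $\epsilon$ (because at any interior position it would already be a quotient of $M$, contradicting $M$ being a brick), and then carries out an intricate positional analysis ($t\ge1$ versus $t=0$, $k\ge2$ versus $k=1$, $r<p$ versus $r\ge p$) to derive the contradiction. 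Your claim that "any proper substring $\delta$ realizing the failure of brickhood can be chosen to lie within a single $u^{k-1}$-block or to be $u$-periodic" is not justified and does not obviously hold; the hard case is the one where it sits at an endpoint.

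The same dualization error appears in your handling of (B2). You conclude that since $M(\delta)$ is a submodule of $M(\epsilon)$ and $M(\epsilon)$ is a quotient of $M(\gamma')$, $M(\delta)$ is a submodule of $M(\gamma')$ "via the appropriate end." But a submodule of a quotient is a subquotient, not a submodule; that step requires its own argument. In the paper, this is the content of the discussion around Figure~\ref{t_0_k_1}: showing that $M(v')$ is a submodule of $M(\gamma')$ \emph{unless} it embeds at the very end of $\epsilon$, and then dispatching the remaining end cases by a long combinatorial analysis that, crucially, invokes the minimality of $w$ via Lemma~\ref{lemma2} (the square of the undirected string $vz$ or $vz_1$ would otherwise yield a smaller band) and a directed-string argument for the remaining subcase. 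Your sketch mentions minimality only in passing and never puts it to work, so the proposal as written does not constitute a proof and the missing pieces are the ones that take up the bulk of the paper's argument.
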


\begin{proof}
By Lemma~\ref{lemma1} the module $M(\epsilon)$ is either a submodule or a quotient of $M$.  Here we consider the case when $M(\epsilon)$ is a submodule of $M$, and the remaining case follows similarly. Suppose there exists $f\in \text{End}\, M(\epsilon)$ that is nonzero and not an isomorphism.  Moreover, we may assume that $\text{im}\,f$ is indecomposable.  Thus, $\text{im}\,f$ is both a submodule and a quotient of $M(\epsilon)$, and it is also a submodule of $M$.  Since $M$ is a brick, $\text{im}\,f$ cannot be a quotient of $M$.  This implies that $\text{im}\,f$ considered as a quotient of $M(\epsilon)$ is supported at a substring of $\epsilon$ appearing in the beginning or the end of $\epsilon$.  We suppose the former case holds.  In this situation, we write $\epsilon=u^kv$ where $u\sim w$, $k\geq 1$, and $v$ is a proper substring of $u$.  Then $\text{im}\,f = M(u^tv')$ for some $t\in [0,k]$ where $v'$ a proper substring of $u$ if $0 \le t \le k-1$ and $v'$ is proper substring of $v$ if $t = k$.  %\textcolor{red}{$v^\prime$ must be a proper substring of $v$ is $t = k$; I'm going to keep reading to see if this will change the argument.}  {\color{blue} yes, good catch}

%Since $M$ is supported on $w$, we may write $\epsilon=u^kv$ where $u\sim_w w$, $k\geq 1$, and $v$ is a proper subword of $u$.  By Lemma~\ref{lemma1} $M(\epsilon)$ is either a submodule or a quotient of $M$.  Here we consider the former case and the latter one follows similarly. 

%Let $M(\epsilon)$ be a submodule of $M$.  Suppose there exists $f\in \text{End}\, M(\epsilon)$ that is nonzero and not an isomorphism.  Moreover, we may assume that $\text{im}\,f$ is indecomposable.  Thus, $\text{im}\,f$ is both a submodule and a quotient of $M(\epsilon)$, and it is also a submodule of $M$.  Since $M$ is a brick, $\text{im}\,f$ cannot be a quotient of $M$.  This implies that $\text{im}\,f$ considered as a quotient of $M(\epsilon)$ is supported at a substring of $\epsilon$ appearing in the beginning or the end of $\epsilon$.  We may suppose the former case, and then $\text{im}\,f = M(u^tv')$ for some $t\in [0,k]$ and $v'$ a proper substring of $u$.  In the latter case, we write $\epsilon=v_2u_2^k$ for some $u_2\sim_w u$ and $v_2$ a proper substring of $u_2$.  

%\textcolor{red}{In the previous sentence, should we be writing ``$\epsilon = v_2u_2^t$ for some $t \in [0,k]$'', instead?}. {\color{blue} edited the above paragraph it should be ok now.}

Then $\gamma=\gamma_1\beta_1\epsilon \beta_2^{-1} \gamma_2$ for some strings $\gamma_1, \gamma_2$ and arrows $\beta_1, \beta_2$, where $\gamma_1\beta_1$ has nonzero length.   Let $u=\alpha_1^{\varepsilon_1}\dots \alpha_n^{\varepsilon_n}$, $v=\alpha_1^{\varepsilon_1} \dots \alpha_p^{\varepsilon_p}$, and $v'=\alpha_1^{\varepsilon_1}\dots\alpha_r^{\varepsilon_r}$ for some $p,r\in [0,n-1]$, where $p=0$ or $r=0$ implies $v$ or $v'$ has length zero.  Note, $\beta_1\not=\alpha_n$ because $\epsilon$ is a maximal $w$-substring of $M$, and $\varepsilon_{r+1} = 1$ because $\text{im}\,f$ is a quotient of $M(\epsilon)$.

Now we claim that $\varepsilon_n=-1$.  To show the claim, we suppose on the contrary that $\varepsilon_n=1$.  If we also have that $\varepsilon_1=1$, then $\beta_1\alpha_1$ is a substring of $\gamma$ and $\alpha_n\alpha_1$ is a substring of $w^2$.   Because $\beta_1\not=\alpha_n$, this contradicts the assumption that $\Lambda$ is a string algebra.  On the other hand, if $\varepsilon_n=1$ and $\varepsilon_1=-1$ then $\beta_1, \alpha_1, \alpha_n$ are three distinct arrows ending at the same vertex.   This also contradicts the assumption that $\Lambda$ is a string algebra.   Therefore, we conclude that $\varepsilon_n=-1$ as claimed. 
In particular, $\gamma$ contains a substring $\beta_1 \alpha_1^{\varepsilon_1}\dots \alpha_{n-1}^{\varepsilon_{n-1}}\alpha_n^{-1}$, so $M(\alpha_1^{\varepsilon_1}\dots \alpha_{n-1}^{\varepsilon_{n-1}})$ is a submodule of $M$.

%By the same reasoning as in the proof of Lemma~\ref{lemma1} we conclude that $s(\alpha_n)=t(\beta_1)$, and more precisely $M(\alpha_1\dots\alpha_{n-1})$ is a submodule of $M=M(\gamma_1\beta_1 \cdot \alpha_1\dots\alpha_{n-1} \cdot \alpha_n u^{k-1}v\beta_2\gamma_2)$.

%\textcolor{red}{(CUT or MOVE to later)Since $\epsilon$ is a substring of $w^{N}$, for $N$ large enough, it is $n$-periodic.}  {\color{blue} sounds good} 
If $t\geq 1$, then $\text{im}\,f$ has the structure of a quotient of $M(\epsilon)$ embedded in $M$ as shown in Figure~\ref{im_f_as_quotient}. Here $u$ starts and ends at vertex $a$ while $v'$ starts at $a$ and ends in some vertex $x$. 
Moreover, $\alpha_{r+1}, \alpha_n$ are arrows starting at $x$ and $a$ respectively, and $\beta_1$ is the arrow ending at $a$.  

%If $t\geq 1$, then $\text{im}\,f$ has the following structure as a quotient of $M(\epsilon)$ embedded in $M$. 

%\textcolor{red}{I think $n$-periodic is not exactly the right term since $\epsilon$ might not be a cyclic string. I also don't think we need to comment on this here.}

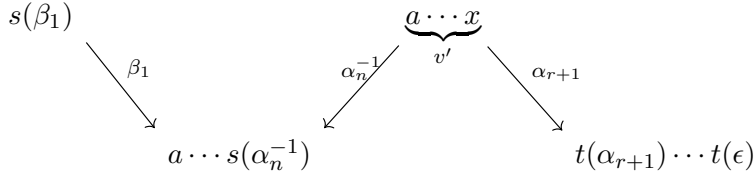
\begin{figure}[!htbp]
\[\begin{tikzcd}
s(\beta_1) \arrow[dr, start anchor = south east, end anchor = north west, "\beta_1"]  & & \underbrace{a \cdots x}_{v'}  \arrow[dl, start anchor = {[yshift = 1.5ex, xshift = .5ex]}, end anchor = north east, "\alpha_n^{-1}"{above}]  \arrow[dr, start anchor = {[yshift = 1.5ex, xshift = -.5ex]}, end anchor = north west, "\alpha_{r+1}"] \\
& a \cdots s(\alpha^{-1}_n) & & t(\alpha_{r+1}) \cdots t(\epsilon) %\arrow[dr, start anchor = {[yshift = 3ex, xshift = 1.5ex]}, end anchor = north west, "\beta_2"] & & \\
%& & s(\gamma_2) \cdots t(\gamma_2)
  \end{tikzcd}\]
  \caption{The diagram of $M(\beta_1\epsilon)$. The sequence of vertices starting at the leftmost $a$ and ending at $x$ represents the string of $\text{im}\,f$. The orientation of $\alpha_{r+1}$ shows that $\text{im}\,f$ is a quotient of $M(\epsilon)$.}
  \label{im_f_as_quotient}
  \end{figure}

%$$\textup{im}\,f\,\,\textup{as quotient of } M(\epsilon): \hspace{1cm}  ^{{\searrow}^{\beta_1}} \underbrace{a \cdots x_{\searrow} \cdots _{\swarrow} a}_{u}  \cdots\cdots  _{\swarrow} {\underbrace{a \cdots x}_{v'}} \,_{{\searrow}^{\alpha_{r+1}}}$$ 
%Here $u$ starts and ends at vertex $a$ while $v'$ starts at $a$ and ends in some vertex $x$. 
%Moreover, $\alpha_{r+1}, \alpha_n$ are arrows starting at $x$ and $a$ respectively, and $\beta_1$ is the arrow ending at $a$.  On the other hand, since $\text{im}\,f$ must also be a submodule of $M(\epsilon)$, its string must be a substring of $\epsilon$ as shown in Figure~\ref{im_f_as_submodule}.% it has the following structure.

On the other hand, since $\text{im}\,f$ must also be a submodule of $M(\epsilon)$, its string must be a substring of $\epsilon$ as shown in Figure~\ref{im_f_as_submodule}. Since $\text{im}\,f = M(u^tv')$ is a proper submodule of $M(\epsilon)$ and $u^tv'$ is a substring of $\epsilon$ appearing in the beginning of $\epsilon$, we have that $\delta_1 = \beta_1$ and $\delta_2^{-1}$ appears in $\epsilon$. However, this implies that $\delta_2^{-1} = \alpha_{r+1}$, which is not possible. This yields a contradiction in the case where $t \ge 1$.

\begin{figure}[!htbp]
\[\begin{tikzcd}
s(\delta_1) \arrow[dr, start anchor = south east, end anchor = north west, "\delta_1"]  & & t(\delta_2^{-1}) \cdots t(\epsilon)  \arrow[dl, start anchor = {[yshift = 1ex, xshift = -2.5ex]}, end anchor = north east, "\delta_2^{-1}"{above}]   \\
& a \cdots x %\arrow[dr, start anchor = {[yshift = 3ex, xshift = 1.5ex]}, end anchor = north west, "\beta_2"] & & \\
%& & s(\gamma_2) \cdots t(\gamma_2)
  \end{tikzcd}\]
  \caption{The diagram of $M(\delta_1\epsilon)$. The sequence of vertices starting at $a$ and ending at $x$ represents the string of $\text{im}\,f$. The orientation of $\delta_{2}^{-1}$ shows that $\text{im}\,f$ is a submodule of $M(\epsilon)$.}
  \label{im_f_as_submodule}
  \end{figure}
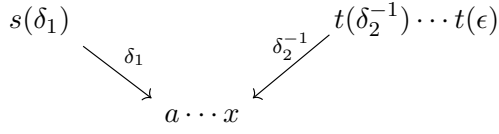

Now, we consider the case $t=0$, so $\text{im}\,f=M(v')$.  Recall that $M(v')$ is a submodule of $M$.  If $k\geq 2$ or $r< p$ then $\gamma$ contains a substring $\alpha_{n}^{-1}  v'  \alpha_{r+1}$.  Thus, $M(v')$ is also a quotient of $M$.   This contradicts the assumption that $M$ is a brick, and proves the lemma if condition (B1) is satisfied. 

%$M = M(\gamma_1\beta_1u\cdot v' \cdot \alpha_{r+1}\dots \alpha_n u^{k-2}v \beta_2\gamma_2)$ or $M=M(\gamma_1\beta_1u\cdot v' \cdot \alpha_{r+1}\dots \alpha_p \beta_2\gamma_2)$ respectively.  This contradicts the assumption that $M$ is a brick, and proves the lemma if condition (B1) is satisfied. 

% However, I would like to change the two diagrams above. I will do this soon.}  {\color{blue} yes, your diagrams are much nicer :)}

%%%%%%%%%%%%%%%%%%
Now suppose condition (B2) is satisfied. By above it remains to consider the case $t=0, k=1$, and $r\geq p$.  Then $\text{im}\,f = M(v')$ and $M(\epsilon)$ appear inside $M$ as shown in Figure~\ref{t_0_k_1}.

\begin{figure}[!htbp]
\[\begin{tikzcd}
s(\beta_1) \arrow[dr, start anchor = south east, end anchor = north west, "\beta_1"]  & & & &  t(\beta_2^{-1}) \arrow[dl, start anchor = {[yshift = 1ex, xshift = -.5ex]}, end anchor = north east, "\beta_2^{-1}"{above}]   \\
& \underbrace{\underbrace{a \cdots b}_{v} \cdots x}_{v'} \arrow[dr, start anchor = {[yshift = 5ex, xshift = -.5ex]}, end anchor = north west, "\alpha_{r+1}"] & & \underbrace{a \cdots b}_{v} \arrow[dl, start anchor = {[yshift = 3ex, xshift = 0ex]}, end anchor = north east, "\alpha_n^{-1}"{above}] \\ & & \cdots  %\arrow[dr, start anchor = {[yshift = 3ex, xshift = 1.5ex]}, end anchor = north west, "\beta_2"] & & \\
%& & s(\gamma_2) \cdots t(\gamma_2)
  \end{tikzcd}\]
  \caption{Here, we show the diagram of $\beta_1\epsilon\beta_2^{-1}$.The string $\epsilon$ starts at the leftmost vertex $a$ and ends at the rightmost vertex $b$.}
  \label{t_0_k_1}
  \end{figure}
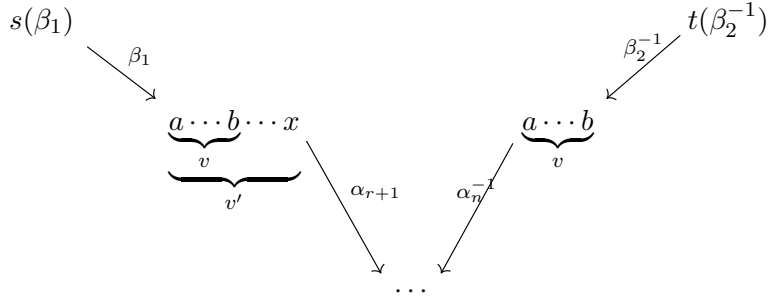

%$$ ^{{\searrow}^{\beta_1}}  \overbrace{\underbrace{a \cdots b \cdots x}_{v'} \,_{\searrow} \cdots _{\swarrow} \underbrace{a \cdots b}_{v}}^{\epsilon} \,^{\,\,^{\beta_2} {\swarrow}} $$

We claim that $\varepsilon_{p+1}=1$. {Otherwise, if $\varepsilon_{p+1} = -1$, then $\beta_2 = \alpha_{p+1}$ because $\Lambda$ is a string algebra. Then since $\epsilon$ is a maximal $w$-substring of $\gamma$ this gives a contradiction unless $\beta_2^{-1}\gamma_2$ has length zero.}   Thus, $\gamma$ also contains a substring $\alpha_n^{-1}v$ at the very end.  This shows that $M(v)$ is both a quotient and a submodule of $M$, contrary to $M$ being a brick. This shows the claim. 

%\textcolor{red}{Sorry, I am not clear on what the previous sentence is saying. Can you clarify it for me?}  {\color{blue} My mistake :) it does not make any sense.  I rewrote this. } \textcolor{red}{Oh ok. Thanks!} 

By condition (B2) there exists a brick string module $M(\gamma')$ such that $M(\epsilon)$ is a quotient of $M(\gamma')$. Then $M(v')$ is also a quotient of $M(\gamma')$, see Figure~\ref{t_0_k_1}.  If $\text{im}\,f=M(v')$ as a submodule of $M(\epsilon)$ does not embed at the very end of $M(\epsilon)$, then $M(v')$ is also a submodule of $M(\gamma')$.  Therefore, we obtain that $M(v')$ is both a submodule and a quotient of $M(\gamma')$, contrary to $M(\gamma')$ being a brick.  This shows that the the string $\epsilon$ ends in $v'$ or $(v')^{-1}$.  

%\textcolor{red}{About the previous paragraph, do we need to consider the distinction between $v'$ and $(v')^{-1}$? It seems like we have set things up so that you would conclude that exactly $v'$ is the string at the end of $\epsilon$. Maybe I am missing something.}  {\color{blue}   When I was writing this up I thought about it, but I think we need to consider both of these cases.  Both $v'$ and its inverse give rise to the same module, but we would have two different modules $M(\epsilon)$ depending on whether $\epsilon$ ends in $v'$ or its inverse.   We already fixed $\epsilon$ and the fact that it starts with $v'$ so we cannot at the same time force it to end in just $v'$.   If it is unclear, I can try to come up with a concrete example.  } \textcolor{red}{Ok, I have thought about this some more, and I think you are right.}

First, suppose $\epsilon$ ends in $(v')^{-1}$.  Since the string $v$ appears at the beginning of $v'$, we conclude that $\epsilon$ ends in $v^{-1}$.   However, we also have that $\epsilon=uv$ ends in the string $v$.  This shows that $v=v^{-1}$, which is only possible if $v=e_a$ for a vertex $a$.  Then we have $\epsilon=u$ which starts in $v'$ and ends in $(v')^{-1}$.  Therefore, $u=v'z_1=z_2(v')^{-1}$ for some strings $z_1, z_2$.  Since $u$ is a band, we obtain $u^2 = z_2 (v')^{-1}v' z_1$ is a string.  This implies that $v'=e_a$. %\textcolor{red}{How do we know that $v' = e_a$?}  {\color{blue}  because this $(v')^{-1}v'$ reduces to a string of length zero, which is not possible since $u^2$ is a string so it should not contain anything of the form $\alpha\alpha^{-1}$ where $\alpha$ is an arrow.  Then $v'=e_a=e_x$  because $v'$ starts with vertex $a$ and ends with vertex $x$.} \textcolor{red}{Ok, I think I see now. Thank you.} 
We also have that $M(e_a)$ as a submodule of $M(\epsilon)$ embeds at the end of $\epsilon=u$.  But $u$ ends with an inverse arrow $\alpha_{n}^{-1}$, which implies that $M(e_a)$ is a quotient of $M(\epsilon)$ and not a submodule.  This is a contradiction, which shows that $\epsilon$ does not end in $(v')^{-1}$. 

Now, suppose $\epsilon$ ends in $v'$.  We observe that $v'\not=v$ because $\epsilon$ ends in $\alpha_{n}^{-1}v$  so $M(v)$ is a quotient of $M(\epsilon)$.   However, we also have that $\epsilon$ ends in $v'$ such that $M(v')$ is a submodule of $M(\epsilon)$, so we cannot have $v'=v$.  Therefore, $\ell(v')>\ell (v)$. Moreover, $\ell(v')<\ell(u)$ since $t=0$.  Since $\epsilon$ starts and ends in a string $v$, we conclude that $v'$ also starts and ends in a string $v$.  We consider two cases based on the relative lengths of $v$ and $v'$.

If $\ell (v')\geq 2\ell(v)$  then we can write $v'=vzv$ for some string $z$.  Note that $vz$ is an undirected string because $\varepsilon_{p+1}=1, \varepsilon_n = -1$, which means that $z$ starts with an arrow $\alpha_{p+1}$ and ends with an inverse arrow $\alpha_n^{-1}$.   Moreover, $\ell (vz)\geq 1$ because $\ell(v')>\ell(v) \geq 0$.  We can write $u=v'z_0$ for some string $z_0$ with $\ell(z_0)\geq 1$.   Then $\epsilon=uv = vzvz_0v$.  Since $\epsilon$ ends in $v'$ we an also write $\epsilon=vz_1v'$ for some string $z_1$ with $\ell(z_0)=\ell (z_1)$.   Combining the two expressions for $\epsilon$ we conclude that $zvz_0=z_1vz$ and we can write %\textcolor{red}{how can we write $u = \epsilon zvz_0$ when $u$ is a proper substring of $\epsilon$? Is there a typo here?}  {\color{blue} yes, typo. it should be $u^2$ instead of $u$. I changed it.  Note that $u=vzvz_0$, so $u^2=\epsilon  zvz_0$} \textcolor{red}{Got it. Thank you!}
$$u^2=\epsilon zvz_0 = vz_1vzvzvz_0=vz_1(vz)^2vz_0.$$

If $\ell (z)\leq \ell(z_0)$  then $\ell((vz)^2)\leq \ell (u)$.  Thus $vz$ is an undirected string such that $(vz)^2$ is {a string}.  By Lemma~\ref{lemma2}, we have that $vz$ is a proper power of a band,  so $u$  (and hence $w$) is not a minimal band. This contradicts the assumption in (B2).   If $\ell(z) > \ell (z_0)=\ell (z_1)\geq 1$ then the equation $zvz_0=z_1vz$ implies that $z=z_1z_2$ for some string $z_2$ with $\ell(z_2)\geq 1$.    Note also that $\ell(vz_1)\geq 1$ and we can write 
\[u=vz_1vz=vz_1vz_1z_2=(vz_1)^2z_2.\]

If $vz_1$ is undirected, then we obtain a contradiction to $w$ being minimal as before.  Suppose on the contrary that $vz_1$ is directed. Then $(vz_1)^2$ is also directed, and because $\varepsilon_{p+1}=1$ we conclude that $(vz_1)^2$ is a composition of arrows instead of inverse arrows.  Since $u$ is undirected, then we may write $z_2=z_3z_4$ for some strings $z_3, z_4$ such that $(vz_1)^2 z_3$ is a maximal directed substring of $u$ appearing in the beginning of $u$.  Then $z_4$ starts with an inverse arrow $\rho_1^{-1}$, and let the last arrow in $z_1$ be $\rho_2$. Then $\rho_2 vz_1z_3 \rho_1^{-1}$ is a substring of $\epsilon$ and also $\gamma'$ such that $M(vz_1z_3)$ is a submodule of $M(\gamma')$. Moreover, the string $\rho_2 vz_1z_3 \rho_1^{-1}$ is not an initial substring of $\epsilon$.

We now claim that $z_3$ is an initial substring of $(vz_1)^m$ for some $m\geq 1$. Note that both $vz_1$ and $z_3$ are directed strings and $vz_1$ has nonzero length. Then we can write $vz_1 = \omega_1 \cdots \omega_r$ and $z_3 = \eta_1 \cdots \eta_s$ where each $\omega_i$ and each $\eta_j$ is an arrow, with the caveat that $z_3$ may be of length zero.
%Such expressions exist because $vz_1$ and $z_3$ are directed strings with nonzero length.  {\color{blue} $z_3$ may have zero length, but that is not a problem $vz_1z_3=vz_1$ still gives rise to the submodule/quotient of $M(\gamma')$.  I think we should just say $vz_1$ has nonzero length} 
Since $(vz_1)^2$ and $vz_1z_3$ are strings, we know that $\omega_r\omega_1$ and $\omega_r\eta_1$ are strings. Because $\Lambda$ is a string algebra, we have that $\omega_r\omega_1 = \omega_r\eta_1.$ We conclude that $\omega_1 = \eta_1$. Similarly, we know that $\omega_1\omega_2$ and $\eta_1\eta_2$ are strings and that $\eta_1\eta_2 = \omega_1\eta_2$. We obtain that $\omega_1\eta_2 = \omega_1\omega_2$ and so $\omega_2 = \eta_2$. Continuing this process, we see that $z_3$ is an initial substring of $(vz_1)^m$ for some $m \ge 1$.

Since $M(\epsilon)$ is a quotient of $M(\gamma')$ we note that $\beta_3^{-1}\epsilon$ is a substring of $\gamma'$ where $\beta_3$ is either an arrow or a string of length zero, and the latter situation occurs only if $\epsilon$ and $\gamma'$ have the same initial vertex. 
%\textcolor{red}{About the previous sentence, how do we know that $\epsilon$ and $\gamma'$ do not start at the same vertex?} {\color{blue} you are right, $\epsilon$ can appear in the beginning of $\gamma'$ but this will not change the fact that  $M(vz_1z_3)$ is also quotient of $M(\gamma')$.   Also I just noticed that we already used $\delta_1, \delta_2$ to denote some arrows in this proof.  I don't think they are the same, i.e. need to replace "$\delta$'s" in this paragraphs with some other name, maybe $\rho$'s?} \textcolor{red}{I think $\rho$ is a good replacement.} 
Then we also see that $\beta_3^{-1}vz_1z_3 \rho_3$ is an initial substring of $\gamma'$ for some arrow $\rho_3$, since $(vz_1)^2z_3$ is a directed string that is a composition of arrows and since $z_3$ is substring of $(vz_1)^m$ for some $m \ge 1$. We conclude that $M(vz_1z_3)$ is also quotient of $M(\gamma')$, which is a contradiction. This shows the lemma in the case $\ell (v')\geq 2\ell(v)$.  % {\color{blue} I think this is good now, unless you want to add more explanations}

Lastly, it remains to consider the case $\ell (v')< 2\ell(v)$.  Then $v'=vz_1=z_2v$ for some strings $z_1, z_2$ such that $1\leq \ell(z_1)=\ell(z_2)<\ell (v)$.   In this case, we can write $v=z_2z_3$ for some string $z_3$ with $\ell(z_3)\geq 1$.  Then 
%$$v'=vz_1=z_2z_3z_1=z_2v=z_2v=(z_2)^2z_3.$$
$$v'=z_2v=z_2z_2z_3=(z_2)^2z_3.$$
If $z_2$ is undirected we again obtain a contradiction to $u$ being minimal as before.  If $z_2$ is a directed string that is a composition of arrows, then we get a contradiction to $M(\gamma')$ being a brick as in the previous case.  Otherwise, if $z_2$ is a directed string that is a composition of inverse arrows, then we get a similar contradiction to $M=M(\gamma)$ being a brick.  This completes the proof in the second case, and shows the lemma. 
\end{proof}

The next few lemmas relate various properties of  brick string modules supported on $w$ and the corresponding band modules $M(w, \lambda, N)$ for $\lambda\in K^*$.

%Let $M(\epsilon)$ be a module over a string algebra $\Lambda$ supported on $w$ where $w$ is a band.  If $M(\epsilon)$ is a quotient (resp., submodule) of $M(w^{N})$ for all $N$ sufficiently large and a submodule (resp., quotient) of $M(\gamma)$ for some string $\gamma$, then $\epsilon$ is a maximal $w$-substring of $\gamma$.

\begin{lem}\label{lemma_sub-quo}
Let $M(\epsilon)$ be a string module over a string algebra $\Lambda$ supported on $w$ where $w$ is a band.  Suppose $M(\epsilon)$ is a quotient (resp., submodule) of $M(w,\lambda,N)$ for some $N$ and a submodule (resp., quotient) of $M(\gamma)$ for some string $\gamma$. Then $\epsilon$ is a maximal $w$-substring of $\gamma$.
\end{lem}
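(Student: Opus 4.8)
The plan is to analyze the structure of $\epsilon$ inside $\gamma$ directly from the two module-theoretic conditions, pushing the hypotheses toward the maximality condition in the definition of a maximal $w$-substring. First I would fix notation: write $\epsilon = u^k v$ with $u \sim w$, $k \ge 1$, and $v$ a proper substring of $u$, so that $\epsilon$ is genuinely supported on $w$ and is certainly \emph{a} substring of $w^N$ for $N$ large enough. The content of the lemma is therefore the \emph{maximality}: that neither $\gamma_{i-1}^{\varepsilon_{i-1}}\epsilon$ nor $\epsilon\gamma_{j+1}^{\varepsilon_{j+1}}$ is a substring of $w^N$, when those one-letter extensions exist inside $\gamma$.

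The key observation is that the hypotheses pin down the orientation of the arrows at the two ends of $\epsilon$ inside $\gamma$. Since $M(\epsilon)$ is a submodule of $M(w,\lambda,N)$, the embedding $\epsilon \hookrightarrow w^N$ realizing this must have the property that at each end of $\epsilon$ (when $\epsilon$ is not all of $w^N$, which it is not since $N$ is large) the adjacent letter of $w^N$ points \emph{into} $\epsilon$ — i.e. at the start of $\epsilon$ the preceding letter of $w^N$ is an arrow and at the end of $\epsilon$ the following letter is an inverse arrow, by the submodule criterion recalled after the Proposition in Section~\ref{sec_prelim}. On the other hand, since $M(\epsilon)$ is a quotient of $M(\gamma)$, the criterion for quotients forces: if $\epsilon$ does not start at $s(\gamma)$ then the letter $\gamma_{i-1}^{\varepsilon_{i-1}}$ of $\gamma$ immediately before $\epsilon$ is an \emph{inverse} arrow, and if $\epsilon$ does not end at $t(\gamma)$ then the letter $\gamma_{j+1}^{\varepsilon_{j+1}}$ after $\epsilon$ is an \emph{arrow}. (If $M(\epsilon)$ is instead a quotient of $M(w,\lambda,N)$ and submodule of $M(\gamma)$, the roles of "arrow" and "inverse arrow" simply swap throughout, and the argument is symmetric — I would say this once and then treat one case.)

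Now I combine the two. Suppose for contradiction that $\epsilon\gamma_{j+1}^{\varepsilon_{j+1}}$ is a substring of $w^N$; the other case is symmetric. The submodule condition on $M(\epsilon) \subseteq M(w,\lambda,N)$ tells us that the letter of $w^N$ following a copy of $\epsilon$ is an inverse arrow $\delta^{-1}$; since $\epsilon = u^k v$ and $w$-substrings of $w^N$ continue uniquely, this letter $\delta^{-1}$ is determined: it is exactly $\alpha_{p+1}^{\varepsilon_{p+1}}$ in the notation $u = \alpha_1^{\varepsilon_1}\cdots\alpha_n^{\varepsilon_n}$, $v = \alpha_1^{\varepsilon_1}\cdots\alpha_p^{\varepsilon_p}$, and it being an inverse arrow means $\varepsilon_{p+1} = -1$. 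But the quotient condition on $M(\epsilon)$ as a quotient of $M(\gamma)$ forces $\gamma_{j+1}^{\varepsilon_{j+1}}$ to be an arrow. Since $\epsilon\gamma_{j+1}^{\varepsilon_{j+1}}$ is assumed to be a substring of $w^N$, and the unique continuation of $\epsilon$ inside $w^N$ is by the inverse arrow $\alpha_{p+1}^{\varepsilon_{p+1}}$, we would need $\gamma_{j+1}^{\varepsilon_{j+1}} = \alpha_{p+1}^{\varepsilon_{p+1}}$, contradicting that the former is an arrow and the latter an inverse arrow. (There is a degenerate subtlety if $v = e_a$ has length zero, i.e. $\epsilon = u^k$; then the continuing letter of $w^N$ is $\alpha_1^{\varepsilon_1}$ and the same dichotomy applies with $p+1$ replaced by $1$.) Hence $\epsilon\gamma_{j+1}^{\varepsilon_{j+1}}$ is not a substring of $w^N$; by symmetry neither is $\gamma_{i-1}^{\varepsilon_{i-1}}\epsilon$, so $\epsilon$ is a maximal $w$-substring of $\gamma$.

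The main obstacle I anticipate is bookkeeping around the two directions (submodule-vs-quotient and the "resp." swap) together with the length-zero edge cases for $v$, and making sure the "unique continuation" statement for substrings of $w^N$ — which is exactly the uniqueness of the decomposition $u_1^k v_1$ recalled just before Lemma~\ref{lemma1} — is invoked cleanly so that the letter of $w^N$ extending $\epsilon$ really is forced to be $\alpha_{p+1}^{\varepsilon_{p+1}}$ and not something else. Once that uniqueness is in hand, the orientation clash between the submodule side (which demands an inverse arrow) and the quotient side (which demands an arrow) closes the argument immediately.
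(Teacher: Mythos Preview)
Your proposal is correct and follows essentially the same route as the paper's proof: both arguments reduce to an orientation clash at the endpoints of $\epsilon$, where the band-module hypothesis forces the adjacent letter of $w^N$ to have one sign and the string-module hypothesis forces the adjacent letter of $\gamma$ to have the opposite sign, so the extension in $\gamma$ cannot coincide with the extension in $w^N$. You treat the (submodule of band, quotient of $\gamma$) case while the paper treats the (quotient of band, submodule of $\gamma$) case, and you spell out the unique-continuation point (that the letter of $w^N$ following $\epsilon$ is determined because $\epsilon$ has length at least $\ell(w)$ and $w$ is primitive) more explicitly than the paper does; otherwise the arguments are the same.
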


\begin{proof}
We consider the case when $M(\epsilon)$ is a quotient of $M(w,\lambda,N)$ for some $N$.  Then $\beta_1^{-1}\epsilon\beta_2$ is a substring of $w^{N+1}$ where $\beta_1, \beta_2$ are arrows.  %\textcolor{red}{Couldn't $\beta_1$, $\beta_2$ also be strings of length zero for the same reason that $\alpha_1$ and $\alpha_2$ could be? I don't think this poses a problem for the proof, but we should say something about $\beta^{-1}$ equals $\alpha_1$  and $\beta_2$ equals $\alpha_2^{-1}$ if all of these are strings of length zero. Do you agree?}  {\color{blue} Good catch! I think there is a problem if for example $\alpha_1$ is trivial so $\epsilon$ appears at the very beginning of $M(\gamma)$ and $\beta_2$ is trivial so $\epsilon$ appears at the very end of $M(w^{N})$.  I think that we really need $\beta_1, \beta_2$ to have nonzero length.  Previously this was ok because it used to say $M(\epsilon)$ is a quotient of $M(w^{\infty})$ so in this case $\beta_1, \beta_2$ were arrows.  Then I replaced this by $M(w^N)$ for all $N$ sufficiently large, but as you noted this is not enough...  Maybe we can say that $w^N$ does not start/end with $\epsilon$ or something equivalent to this in the statement of the lemma? this might also come up in the statement of the next lemma}  \textcolor{red}{Thanks! I think your way of fixing the statement of the lemma makes sense. I made this change.}  {\color{blue}  Okay, now we "overfixed" the problem in some sence.  Before when I wrote $M(w^N)$ I meant a string module, in which case we would need the condition "suppose that $w^N$ does not start and does not end with $\epsilon$" in the statement of the lemma.  However, you have a good idea of using a band module instead of $M(w^{\infty})$, which would also take care of the fact that $\beta_1, \beta_2$ must have nonzero length, which is perfect.  I think now this lemma and the next should be fine in that respect. Also now that we are using bands I don't think we need the statement "for all $N$ sufficiently large" here, we could just say "for some $N$".}
On the other hand, since $M(\epsilon)$ is a submodule of $M(\gamma)$ it follows that $\alpha_1\epsilon\alpha_2^{-1}$ is a substring of $\gamma$, where $\alpha_1, \alpha_2$ are arrows or constant paths of length zero.  In particular, we see that $\beta_1^{-1} \not = \alpha_1$ and $\beta_2\not=\alpha_2^{-1}$. Therefore, $\epsilon$ cannot be extended in $M(\gamma)$ to a larger substring that would also be a substring of $w^{N+1}$.  This shows the claim.
\end{proof}

\begin{lem}\label{lemma4}
Let $w$ be a band, and let $\epsilon$ be a substring of $w^{N}$ supported on $w$ for some $N \ge 1$.  If $M(\epsilon)$ is a brick, then it is a submodule or a quotient of $M(w,\lambda, N+1)$ for any $\lambda \in K^*$. %{\color{red} CUT: for all $N$ sufficiently large. }
%Suppose $w$ is a band and $\epsilon=u^kv$ where $k\geq 1$, $u\sim w$, and $v$ is a proper substring of $u$.  If $M(\epsilon)$ is a brick then it is a submodule or a quotient of $M(w^{N})$ for all $N$ sufficiently large. 
\end{lem}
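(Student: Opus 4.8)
The plan is to analyze the structure of $\epsilon$ as a substring of $w^N$ and show that the "missing" arrows at each end of $\epsilon$ — the arrows that, together with $\epsilon$, would continue $w^N$ — force $\epsilon$ to sit inside $M(w,\lambda,N+1)$ either as a submodule or as a quotient. Write $\epsilon = u^k v$ with $u \sim w$, $k \ge 1$, and $v$ a proper substring of $u$ (possibly of length zero), say $u = \alpha_1^{\varepsilon_1}\cdots\alpha_n^{\varepsilon_n}$ and $v = \alpha_1^{\varepsilon_1}\cdots\alpha_p^{\varepsilon_p}$. The band module $M(w,\lambda,N+1)$ has, as its string-theoretic "backbone," the walk of $w^{N+1}$ (with a Jordan block inserted at the repeat), so submodules and quotients of $M(w,\lambda,N+1)$ coming from substrings of $w^{N+1}$ are governed by exactly the same sign conditions as for the string module $M(w^{N+1})$ — recall $M(u)$ is a quotient (resp.\ submodule) of $M(w')$ precisely when the arrows of $w'$ immediately flanking $u$ point the right way. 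Since $N+1 \ge k+1$ and $\ell(v) < n = \ell(u)$, the substring $\epsilon = u^k v$ genuinely embeds in the interior (or at an end) of $w^{N+1}$, and the two flanking symbols of $\epsilon$ inside $w^{N+1}$ are $\alpha_n^{\varepsilon_n}$ on the left (from the preceding copy of $u$) and $\alpha_{p+1}^{\varepsilon_{p+1}}$ on the right (continuing $v$ toward the next copy of $u$).

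The key step is then to show that we cannot have $\varepsilon_n = \varepsilon_{p+1}$ both in the "wrong" orientation simultaneously; more precisely, I claim $(\varepsilon_n, \varepsilon_{p+1})$ is one of the three pairs $(-1, 1)$, $(-1,-1)$, $(1,1)$, each of which makes $M(\epsilon)$ a submodule or quotient of $M(w,\lambda,N+1)$, and the remaining pair $(1, -1)$ is excluded by the brick hypothesis on $M(\epsilon)$. Indeed, suppose $\varepsilon_n = 1$ and $\varepsilon_{p+1} = -1$. Then inside $w^{N+1}$ the string $\epsilon = u^k v$ is flanked on the left by the arrow $\alpha_n$ ending at $s(u) = a$ and on the right by the inverse arrow $\alpha_{p+1}^{-1}$, which means $M(v)$ — viewed as the initial substring of $\epsilon$ of the same length as $v$ — would be simultaneously a submodule and a quotient of $M(\epsilon)$: the left flank $\alpha_n$ (pointing into $a$) shows $M(v)$ is a submodule quotiented off, wait — I need to recheck orientations, but the point is that one of $M(v)$ or $M(u^{k-1}v)$ or a similar canonical sub/quotient piece of $M(\epsilon)$ becomes both a sub and a quotient of $M(\epsilon)$, contradicting that $M(\epsilon)$ is a brick (its endomorphism with that image is nonzero and non-invertible). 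This is the same mechanism used in Lemma~\ref{lemma1} and Lemma~\ref{lemmaBB}, so I would either invoke those lemmas or reproduce the short case analysis: if $\varepsilon_n = 1 = \varepsilon_1$ then $\alpha_n\alpha_1$ is a path in a string algebra forced to equal the path continuing $w$, etc., and if $\varepsilon_n = 1, \varepsilon_1 = -1$ we get three arrows into $a$, violating (S1). Symmetric reasoning at the right-hand end handles $\varepsilon_{p+1}$.

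Once the pair $(\varepsilon_n,\varepsilon_{p+1}) = (1,-1)$ is ruled out, I finish by cases. If $\varepsilon_n = -1$, then the left flank of $\epsilon$ in $w^{N+1}$ is an inverse arrow, so $M(\epsilon)$ is a quotient of $M(w^{N+1})$ on that side; if additionally $\varepsilon_{p+1} = 1$ the right flank is also an arrow (outgoing), so $M(\epsilon)$ is a quotient of $M(w^{N+1})$, hence of $M(w,\lambda,N+1)$; if $\varepsilon_{p+1} = -1$ one checks $M(\epsilon)$ is a submodule on the right but since the left is a quotient we still land at: $M(\epsilon)$ is a submodule of $M(w,\lambda,N+1)$ (here the Jordan block at the gluing point of $w^{N+1}$ is harmless because $\epsilon$, having length $< \ell(w^{N+1})$, avoids straddling it, or can be positioned to avoid it). Symmetrically $\varepsilon_n = 1$ forces $\varepsilon_{p+1} = 1$ and makes $M(\epsilon)$ a submodule. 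In each surviving case $M(\epsilon)$ is a submodule or quotient of $M(w,\lambda,N+1)$, which is the assertion. The main obstacle I anticipate is bookkeeping the orientation conventions (quotient vs.\ submodule corresponds to which sign of the flanking $\varepsilon$) together with confirming that the $k \times k$ Jordan block defining $M(w,\lambda,N+1)$ does not obstruct realizing $M(\epsilon)$ as an honest sub/quotient — this is where the hypothesis $N+1$ (one more copy of $w$ than appears in $\epsilon$) is used, giving enough room to place $\epsilon$ entirely within a block where all the band maps are identities.
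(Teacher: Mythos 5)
Your overall plan — decompose $\epsilon = u^k v$, look at the flanking symbols $\alpha_n^{\varepsilon_n}$ and $\alpha_{p+1}^{\varepsilon_{p+1}}$ of $\epsilon$ inside $w^{N+1}$, and use the brick hypothesis to kill the pairs that prevent $M(\epsilon)$ from being a sub or quotient of the band module — is exactly the paper's strategy. But the sign bookkeeping is inverted, and this is not a cosmetic issue: it breaks the key step. With the paper's convention (a symbol $\gamma_{i-1}^{\varepsilon_{i-1}}$ to the \emph{left} of a substring gives a quotient when $\varepsilon_{i-1}=-1$ and a submodule when $\varepsilon_{i-1}=1$; a symbol to the \emph{right} gives a quotient when $\varepsilon_{j+1}=1$ and a submodule when $\varepsilon_{j+1}=-1$), the pair $(\varepsilon_n,\varepsilon_{p+1})=(-1,1)$ makes $M(\epsilon)$ a quotient and $(1,-1)$ makes it a submodule; the pairs that must be excluded are $(1,1)$ and $(-1,-1)$, i.e.\ precisely $\varepsilon_n=\varepsilon_{p+1}$. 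You write the condition ``we cannot have $\varepsilon_n = \varepsilon_{p+1}$'' correctly in words but then list $(-1,-1)$ and $(1,1)$ among the \emph{good} pairs and try to exclude $(1,-1)$, which is one of the valid outcomes. And the argument you sketch for excluding $(1,-1)$ does not yield a contradiction: with $\varepsilon_n=1,\varepsilon_{p+1}=-1$, the copy of $v$ at the start of $\epsilon$ (followed by $\alpha_{p+1}^{-1}$) is a submodule, and the copy of $v$ at the end (preceded by $\alpha_n$) is also a submodule — $M(v)$ is not both a sub and a quotient, so the brick hypothesis is not violated. The same confusion infects the closing case analysis, where you assert that the mixed situation ``left is a quotient, right is a submodule'' still makes $M(\epsilon)$ a submodule of the band module; it does not — a string with one quotient-type flank and one submodule-type flank is neither a sub nor a quotient of $M(w,\lambda,N+1)$.

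The actual contradiction should be run in the bad cases $(1,1)$ and $(-1,-1)$. For instance if $\varepsilon_n=\varepsilon_{p+1}=1$: the initial copy of $v$ in $\epsilon=u^kv$ is followed by the arrow $\alpha_{p+1}$, so $M(v)$ is a quotient of $M(\epsilon)$; the terminal copy of $v$ is preceded by the arrow $\alpha_n$, so $M(v)$ is a submodule of $M(\epsilon)$; hence $M(v)$ is both, contradicting $M(\epsilon)$ being a brick. Once these two pairs are excluded, the remaining two give directly that $M(\epsilon)$ is a quotient (if $(-1,1)$) or a submodule (if $(1,-1)$) of $M(w,\lambda,N+1)$; one extra copy of $w$ in the band module is used only so that the ``circular'' string $w^{N+1}$ is long enough to flank $\epsilon$ on both sides by a full copy of $u$. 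You flagged the orientation bookkeeping as your main concern, and that concern was warranted: flipping the roles of the two pairs as written makes your draft unsalvageable without the correction above.
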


\begin{proof}
Since $\epsilon$ is a substring of $w^{N}$ supported on $w$ then we can write $\epsilon=u^kv$ where $k\geq 1$, $u\sim w$, and $v$ is a proper substring of $u$. 
Let $u=v\alpha^{\varepsilon_{\alpha}} v' \beta^{\varepsilon_{\beta}} $ where $\alpha,\beta$ are arrows while $v'$ is a string.  Suppose $\varepsilon_{\alpha}=1$, then $\alpha$ starts at $t(v)$ and ends in $s(v')$.  In particular, $M(v)$ is a quotient of $M(\epsilon)$ since $u$ appears in the beginning of $\epsilon$. Because $v$ also appears at the end of $\epsilon$, if $M(\epsilon)$ is not a quotient of $M(w,\lambda,N+1)$ then $\varepsilon_\beta=1$ and the arrow $\beta$ starts in $t(v')$ and ends in $s(v)$.  In this case, $M(v)$ is a submodule of $M(\epsilon)$ as  $\beta v$ appears at the very end of $\epsilon$.  This contradicts the assumption that $M(\epsilon)$ is a brick.  Thus, $M(\epsilon)$ is a quotient of $M(w,\lambda,N+1)$ as desired.  
The case when $\varepsilon_{\alpha}=-1$ follows similarly.  
\end{proof}

%\begin{lem}\label{lemma2}
%If $u$ is an undirected string such that $u^2$ is nonzero, then $u$ is equivalent to a power of a band.
%\end{lem}

%\begin{proof}
%Since $u^2$ is nonzero, it follows that $u$ starts and ends at the same vertex.  Moreover, since $u$ is undirected we can write $u=u_1u_2$ as a composition of strings where $t(u_1)=s(u_2)$ is a source or a sink.  Then $u'=u_2u_1$ is nonzero, because $u^2\not=0$, and $s(u')=t(u')$.  In addition, if $u'$ starts in an arrow then by construction it must end at an inverse arrow, and vice versa.  This shows that $u'$ is a band and $u\sim_{u'} u'$.   Note that here $u'$ is not necessarily a primitive band.
%\end{proof}

\begin{lem}\label{lemC2}
Let $\Lambda$ be a string algebra with a minimal band $w$ and a string $\epsilon=u^kv$ where $k\geq 1$, $u\sim w$, and $v$ is a proper substring of $u$.  If $u=u_0^2u'$ for some string $u_0$ with $\ell(u_0)\geq 1$, then $M(\epsilon)$ is not a brick.  
\end{lem}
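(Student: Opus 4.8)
The plan is to exhibit, inside $M(\epsilon)$, a proper subquotient $M(r)$ that occurs both as a submodule and as a quotient of $M(\epsilon)$; the composite $M(\epsilon)\twoheadrightarrow M(r)\hookrightarrow M(\epsilon)$ is then a nonzero non-invertible endomorphism, so $M(\epsilon)$ is not a brick (equivalently, this shows $\dim_K\text{End}_\Lambda(M(\epsilon))\ge 2$). The first step is to observe that $u_0$ must be directed: since $u_0^2$ is a substring of the band $u$ it is a string, so were $u_0$ undirected, Lemma~\ref{lemma2} would yield $u_0 = w_1^{\ell}$ for a band $w_1$ with $\ell\ge 1$, and then $u_0^2 = w_1^{2\ell}$ would be a substring of $u\sim w$ with $2\ell\ge 2$ and $\ell(w_1)\le\ell(u_0)<\ell(u)$, contradicting the minimality of $w$. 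So $u_0$ is directed; I would treat the case in which $u_0=\alpha_1\cdots\alpha_n$ is a path of arrows (with $n=\ell(u_0)\ge 1$), the inverse-arrow case being dual with the roles of ``submodule'' and ``quotient'' interchanged. Note that $u_0$ is an oriented cycle (not necessarily simple) because $u_0^2$ is a string, and $\ell(u')\ge 1$ because $u$, being a band, is undirected.

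Next I would study the longest initial substring $\delta$ of $u$ consisting only of arrows. Since $u$ is undirected, $\delta\ne u$, so $\delta$ is immediately followed in $u$ by an inverse arrow $\rho^{-1}$; since $u_0^2$ is an initial substring of $u$, we have $\ell(\delta)\ge 2n$. Moreover the words $\alpha_c\alpha_{c+1}$ (indices in $\mathbb{Z}/n$) are all substrings of $u_0^2$ and hence avoid $I$, so axiom (S2) forces: in any string, an arrow $\alpha_c$ can be followed by the arrow $\alpha_{c+1}$ and by no other arrow. An induction on position then shows that $\delta$ is an initial substring of $u_0^{\,m}$ for $m$ large, i.e.\ $\delta=u_0^{\,j}p$ where $j\ge 2$ and $p$ is a (possibly empty) proper prefix of $u_0$. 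Put $r:=u_0$ if $p$ is empty and $r:=p$ otherwise. Then $r$ is a nonempty prefix of $u_0$ with $\ell(r)\le n<\ell(\delta)$; the final $\ell(r)$ symbols of $\delta$ spell $r$; the symbol of $\delta$ immediately before this last copy of $r$ is the arrow $\alpha_n$ (in both the case $p=\emptyset$ and the case $p\ne\emptyset$); and the symbol of $\delta$ immediately after the initial copy of $r$ is again an arrow.

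Finally I would locate these copies of $r$ inside $\epsilon=u^{k}v$. Because $\ell(\delta)<\ell(u)\le\ell(\epsilon)$, the word $\delta\rho^{-1}$ is an initial segment of $\epsilon$. The copy of $r$ at the start of $\epsilon$ is followed by an arrow, so $M(r)$ is a quotient of $M(\epsilon)$; the copy of $r$ occupying positions $\ell(\delta)-\ell(r)+1,\dots,\ell(\delta)$ of $\epsilon$ is preceded by the arrow $\alpha_n$ and followed by the inverse arrow $\rho^{-1}$, so $M(r)$ is a submodule of $M(\epsilon)$. These two copies are distinct since $\ell(\delta)>\ell(r)$, and $M(r)$ is a proper subquotient since $\ell(r)\le n<\ell(u)\le\ell(\epsilon)$; hence $M(\epsilon)$ is not a brick. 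The step I expect to be the crux is pinning down $\delta$: one must use the string-algebra axiom (S2) to prevent the maximal initial arrow-run of $u$ from leaving the cyclic word $u_0u_0u_0\cdots$, and then the placement of the submodule copy of $r$ must be verified separately in the cases $p=\emptyset$ and $p\ne\emptyset$, both of which collapse to the same conclusion because the symbol just before the final copy of $r$ in $\delta$ is $\alpha_n$ either way.
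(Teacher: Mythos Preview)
Your proof is correct and follows essentially the same route as the paper: reduce to $u_0$ directed via Lemma~\ref{lemma2} and minimality of $w$, take the maximal initial directed segment $\delta$ of $u$, use (S2) to see that $\delta$ is a prefix of a power of $u_0$, and then exhibit a string occurring both as an initial (quotient-type) and as a $\delta$-terminal (submodule-type) substring of $\epsilon$. The only cosmetic difference is the choice of that string: the paper takes $r=u_0v'$ where $\delta=u_0^2v'$, whereas you take the shorter $r=p$ (or $r=u_0$ when $p$ is empty) where $\delta=u_0^{\,j}p$; both choices work for the same reason.
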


\begin{proof}
%If $u_0$ is undirected then by Lemma~\ref{lemma2} we conclude that $u_0$ is equals a power of some band, contrary to $w$ being minimal.  Thus, suppose $u_0$ is a directed string, and let $u_0^2v'$ be a maximal directed substring in $u$ appearing at the start of $u$.  Note that $u$ is not directed since it is a band and $\Lambda$ is finite dimensional. This implies that $u_0^2v'\not=u$.  In particular, $M(u_0v')$ is a proper (uniserial) quotient or a submodule of $M(\epsilon)$.  Since $M(\epsilon)$ is a module over a string algebra, it follows that $u_0^2v'=u_0v'v''$, for some string $v''$.  This implies that $M(u_0v')$ is also a submodule or a quotient of $M(\epsilon)$ respectively, embedding at the start of $\epsilon$.  This shows that $M(\epsilon)$ is not a brick. 
If $u_0$ is undirected then by Lemma~\ref{lemma2} we conclude that $u_0$ is equals a power of some band, contrary to $w$ being minimal.  Thus, suppose $u_0$ is a directed string, and let $u_0^2v'$ be a maximal directed substring in $u$ appearing at the start of $u$.  Note that $u$ is not directed since it is a band and $\Lambda$ is finite dimensional. This implies that $u_0^2v'$ is a proper initial substring of $u$.  

Using an argument similar to what appears in the proof of Lemma 3.4 and the fact that $u_0^2v'$ is a directed string, one shows that $v'$ is an initial substring of $u_0^{m}$ for some $m \ge 1$. Therefore, if $u_0 = \omega_1 \cdots \omega_s$, we can write $u_0^2v' = u_0^2u_0^m\omega_1 \cdots \omega_r$ for some $r \le s$ and some $m \ge 0$. We now see that $u_0v' = u_0^{m+1}\omega_1\cdots \omega_r$ is an initial substring of $u_0^2v'$. This initial substring of $u_0^2v'$ identifies $M(u_0v')$ as a quotient of $M(\epsilon)$ since $u_0^2v'$ is a directed string. 

We can also view $u_0v' = u_0^{m+1}\omega_1\cdots \omega_r$ as a terminal substring $u_0^2v'$. By the maximality of $u_0^2v'$, this terminal substring of identifies $M(u_0v')$ as a quotient of $M(\epsilon$). This shows that $M(\epsilon)$ is not a brick. 
\end{proof}

Next is the last result of this section.  It says that we can enlarge certain brick modules supported on $w$ by adding another copy of a band, and the resulting module remains a brick.  Similarly to the proof of Lemma~\ref{lemmaBB}, the arguments become more technical in the case $k=1$. 

\begin{lem}\label{lemmaCC}
Let $\Lambda$ be a string algebra.  Suppose $w$ is a minimal band in $\Lambda$ and $\epsilon=u^kv$ where $k\geq 1$, $u\sim w$, and $v$ is a proper substring of $u$.  If one of the following conditions is satisfied 
\begin{itemize}
\item[(C1)] $k\geq 2$ and $M(\epsilon)$ is a brick
\item[(C2)] $k\geq 1$, $M(\epsilon)$ is a brick and a quotient or a submodule of a brick $M(z)$ where $z$ is a substring of $w^{N}$, for some $N$, supported on $u^{k+1}$
\end{itemize}
then $M(u\epsilon)$ is a brick. 
\end{lem}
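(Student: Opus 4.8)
The goal is to show that $M(u\epsilon)$ is a brick, where $\epsilon = u^k v$ and we are under hypothesis (C1) or (C2). Write $\gamma := u\epsilon = u^{k+1}v$, so $M(\gamma)$ is supported on $u^{k+1}$, hence on $w^{k+1}$. I would argue by contradiction: suppose $f \in \operatorname{End} M(\gamma)$ is nonzero and not an isomorphism, and after passing to the image we may assume $N := \operatorname{im} f$ is indecomposable, so $N = M(\delta)$ for a substring $\delta$ of $\gamma$ that is simultaneously a submodule and a quotient of $M(\gamma)$. The first step is to locate $\delta$ inside $\gamma$: because $\gamma = u^{k+1}v$ is itself a substring of $w^{N'}$, any substring of $\gamma$ is a substring of a power of $w$, so in particular $\delta$ is supported on $w$ or has length zero (and if it has length zero the usual quotient/submodule incompatibility at the ends gives a contradiction quickly, using that $u$ is a band so its first and last symbols have a definite orientation).

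**Reducing to Lemma~\ref{lemmaBB}.** The key idea is that $M(\gamma)$ being a brick should follow by applying Lemma~\ref{lemmaBB} "in reverse'': $M(\gamma)$ is supported on $w^{k+1}$, and $\epsilon = u^k v$ (equivalently, a suitable cyclic-shift representative) is a maximal $w$-substring of $\gamma$ — I would verify maximality using that $u$ is a band together with the string-algebra axiom (S2), exactly as in the computations in the proof of Lemma~\ref{lemma1}. Then, under (C1), since $k \geq 2$ we have $k+1 \geq 3 \geq 2$, and I would like to invoke Lemma~\ref{lemmaBB}(B1) — but that lemma runs the other direction (from a brick supported on $w^{k+1}$ it deduces the maximal $w$-substring is a brick). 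So instead I would mimic its proof: take the hypothetical endomorphism $f$ of $M(\gamma)$ with indecomposable image $N = M(u^t v')$ for appropriate $t, v'$, and show directly that $N$ is simultaneously a submodule and a quotient of the \emph{smaller} brick $M(\epsilon)$ (which is a submodule or quotient of $M(\gamma)$ by Lemma~\ref{lemma1} applied to $M(\gamma)$ — except we do not yet know $M(\gamma)$ is a brick, so I must instead track $N$ against $M(\epsilon)$ and $M(z)$ directly). Concretely: if $N$ sits far enough inside $\gamma$ that its string lies entirely within the copy of $\epsilon$, then $N$ is both a sub and a quotient of $M(\epsilon)$, contradicting that $M(\epsilon)$ is a brick; the remaining cases are where $N$'s string straddles the boundary between the extra copy of $u$ at the front and $\epsilon$, or reaches the outer ends of $\gamma$ — these are handled by the string-algebra axioms forcing orientations of the connecting arrows, just as in Lemma~\ref{lemmaBB}.

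**The hard case: (C2) with $k = 1$.** As the authors warn, $k = 1$ is where it gets technical: now $\gamma = u^2 v$ and there is little "room,'' so an endomorphism $f$ whose image $N = M(v')$ (the $t = 0$ case) could potentially be both a sub and a quotient of $M(\gamma)$ without immediately contradicting the brickness of $M(\epsilon) = M(uv)$. This is precisely where the auxiliary brick $M(z)$ from hypothesis (C2) enters: $z$ is supported on $u^{k+1} = u^2$ and $M(\gamma) = M(u^2 v)$ is (or will be shown to be) a quotient/submodule of $M(z)$ in a way that lets me transport $N$ into $M(z)$, get $N$ as both a sub and a quotient there, and contradict that $M(z)$ is a brick. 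I would also invoke Lemma~\ref{lemC2} and Lemma~\ref{lemma2}: if any of the small strings produced (like the $z_2$, $vz_1$ pieces in the proof of Lemma~\ref{lemmaBB}) turn out to be undirected, Lemma~\ref{lemma2} makes them a proper power of a band, contradicting minimality of $w$; and the hypothesis "$u$ minimal'' via Lemma~\ref{lemC2} rules out $u$ factoring as $u_0^2 u'$. So the expected main obstacle is the bookkeeping in the $k=1$, $t=0$ subcase of (C2): carefully matching up where $M(v')$ can embed as a submodule versus where it appears as a quotient, and in each configuration either (i) pushing the contradiction into $M(z)$, or (ii) extracting an undirected substring whose square is a string to contradict minimality of $w$ via Lemma~\ref{lemma2}. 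The directed/undirected dichotomy and the "initial substring of $(vz_1)^m$'' combinatorics from the proof of Lemma~\ref{lemmaBB} will need to be redeployed almost verbatim, so I would state them as the reusable sublemma they implicitly are and cite that.
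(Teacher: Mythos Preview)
Your overall contradiction strategy (take a nonzero non-isomorphism $f$ with indecomposable image $N$, and locate $N$ as simultaneous sub/quotient) is the same as the paper's. However, you miss the paper's first key simplification: with $u=u'\alpha^{-1}$ there is a short exact sequence $0\to M(u')\xrightarrow{\iota} M(u\epsilon)\xrightarrow{\pi} M(\epsilon)\to 0$, and crucially $M(u')$ is \emph{also} a submodule of $M(\epsilon)$ via a second inclusion $i'$. A two-line diagram chase then shows that if $\text{im}\,f$ were a quotient of $M(\epsilon)$, then $f=0$. This single homological step eliminates all configurations in which the quotient position of $N$ sits inside the copy of $\epsilon$, replacing the position-tracking case analysis you outline and forcing $\text{im}\,f=M(u''u^{k-1}vv')$ with $u'',v'$ of positive length. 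For (C1) you then get the contradiction by observing that $M(u''u^{k-2}vv')$ is both a submodule and a quotient of $M(\epsilon)$.

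For (C2) with $k=1$ there is a genuine gap in your plan. You want to ``transport $N$ into $M(z)$'' and derive a contradiction to $M(z)$ being a brick, but $M(u\epsilon)$ need not be a submodule or quotient of $M(z)$: both are supported on $u^2$ and there is no containment. The paper's move is a bootstrap you do not mention: since $z$ is supported on $u^2$, condition (C1) applies to $M(z)$, so the extended string $z'$ (replacing a factor $u_0^2$ in $z$ by $u_0^3$) gives a brick $M(z')$; and \emph{now} $M(u\epsilon)$ is a quotient of $M(z')$, so $\text{im}\,f$ is a quotient but not a submodule of $M(z')$, forcing its submodule position in $M(u\epsilon)$ to lie at an endpoint not shared with $z'$. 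Only after this does the string combinatorics begin, and here your sketch also drifts: the image is $M(u''vv')$ (with $u'',v'$ nonempty), not a module $M(v')$ of the $t=0$ shape from Lemma~\ref{lemmaBB}; the subsequent argument analyses the two equalities $u=u''vv'z_2=vv'z_1u''$ and repeatedly uses Lemma~\ref{lemma2} (contradicting minimality of $w$) and Lemma~\ref{lemC2} (contradicting that $M(\epsilon)$ is a brick) as you anticipated, but the case structure is driven by those two expressions for $u$ rather than by a redeployment of the $(vz_1)^m$ combinatorics from Lemma~\ref{lemmaBB}.
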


\begin{proof}
Suppose $M(\epsilon)$ is a brick.  Let $u=u'\alpha^{\varepsilon_{\alpha}}$ where $u'$ is a string and $\alpha$ is an arrow.  Observe that $M(\epsilon)$ is either a quotient or a submodule of $M(u\epsilon)$ depending on the sign of $\varepsilon_{\alpha}$.  We consider the former case when $\varepsilon_{\alpha}=-1$, so $M(\epsilon)$ is a quotient of $M(u\epsilon)$.  The proof of the latter case follows similarly.  If $\varepsilon_{\alpha}=-1$,  we have the following short exact sequence 
$$0\to M(u')\xrightarrow{i} M(u\epsilon) \xrightarrow{\pi} M(\epsilon)\rightarrow 0.$$
Since $\epsilon$ starts with $u=u'\alpha^{-1}$ then $M(u')$ is also a submodule of $M(\epsilon)$.  Let $i': M(u')\to M(\epsilon)$ denote this inclusion. 

We want to show that $M(u\epsilon)$ is a brick, so assume there exists some $f\in \text{End}\, M(u\epsilon)$ nonzero and not an isomorphism.  Moreover, we may suppose that $\text{im}\, f$ is indecomposable.   Let $j: \text{im}\,f \to M(u\epsilon)$ denote the corresponding inclusion.  

If $\text{im}\,f$ is also a quotient of $M(\epsilon)$, then there exists a surjective map $\rho: M(\epsilon)\to \text{im}\,f$.   Consider the following composition 
$$M(\epsilon)\xrightarrow{\rho} \text{im}\,f\xrightarrow{j} M(u\epsilon) \xrightarrow{\pi} M(\epsilon),$$
which yields an element of $\text{End} \, M(\epsilon)$.   This map cannot be an isomorphism, hence it must be zero.   Because $\pi j \rho = 0$ and $\rho$ is surjective, we conclude that $\pi j =0$.  This implies that $j$ factors through $\text{ker}\,\pi = M(u')$.   In particular, there exists a map $g: \text{im}\,f \to M(u')$ such that $ig=j$.   We obtain another composition of morphisms as follows: 
$$M(\epsilon)\xrightarrow{\rho} \text{im}\,f \xrightarrow{g} M(u')\xrightarrow{i'} M(\epsilon).$$
Again this cannot be an isomorphism, so it must be zero.  Since $\rho$ is surjective and $i'$ is injective, we conclude $g=0$ and hence $j=0$.  This implies that $\text{im}\,f=0$ so $f$ is the zero map. This contradicts that $\text{im}\,f$ is nonzero. So $\text{im}\,f$ is not a quotient of $M(\epsilon)$.

%\textcolor{red}{I think we could replace the previous sentence}

Since $\text{im}\,f$ is not a quotient of $M(\epsilon)$ but it is a quotient of $M(u\epsilon)$, we conclude that $\text{im}\,f=M(u''u^{k-1}vv')$ for some $v',u''$ substrings of $u$ of length greater than zero.  Note that $u$ ends in $u''$ and $u^2$ starts with $vv'$.   Suppose condition (C1) is satisfied, so $k\geq 2$.  Then we see that $M(u''u^{k-2}vv')$ is a quotient of $M(\epsilon)$.  Similarly, because $\text{im}\,f$ is a submodule of $M(u\epsilon)$ then $M(u''u^{k-2}vv')$ is also a submodule of $M(\epsilon)$.  This contradicts the assumption that $M(\epsilon)$ is a brick and proves the first part of the lemma. 

%\textcolor{red}{I think the previous paragraph is somewhat difficult, but I can't seem to come up with any concrete suggestions right now.} {\color{blue} I agree, am I am not sure if more diagrams would help? }

Now suppose condition (C2) is satisfied.  By above it suffices to consider the case $k=1$, and we have $\text{im}\,f =  M(u''vv')$ where $v',u''$ have length greater then zero, $u$ ends in $u''$ and $u^2$ starts with $vv'$.   Suppose $u$ starts and ends at vertex $a$, the string $v$ ends in vertex $b$, while $v'$ ends in $y$ and $u''$ starts in some vertex $x$.  We give the corresponding diagrams in Figures \ref{fig:4} and \ref{fig:5}. 

\begin{figure}[!htbp]
\[\begin{tikzcd}
a\cdots b \arrow[dr, start anchor = south east, end anchor = north west, "\beta"]  & & a \cdots b  \arrow[dl, start anchor = {[yshift = 0ex, xshift = -1.5ex]}, end anchor = north east, "\alpha_{\ }^{-1}"{above}]  \arrow[dr, start anchor = {[yshift = 0ex, xshift = 1.5ex]}, end anchor = north west, "\beta"] & & \underbrace{a \cdots b}_{v}  \arrow[dl, start anchor = {[yshift = 1.5ex, xshift = .5ex]}, end anchor = north east, "\alpha_{\ }^{-1}"{above}]  \\
& t(\beta) \cdots s(\alpha^{-1}) & & t(\beta) \cdots t(\alpha^{-1}) %\arrow[dr, start anchor = {[yshift = 3ex, xshift = 1.5ex]}, end anchor = north west, "\beta_2"] & & \\
%& & s(\gamma_2) \cdots t(\gamma_2)
  \end{tikzcd}\]
  \caption{The diagram of $M(u\epsilon) = M(u^2v)$.}
  \label{fig:4}
  \end{figure}
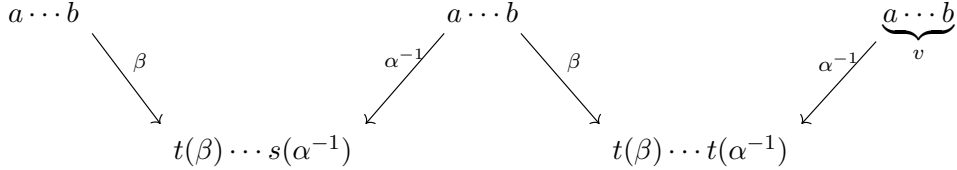
  
  %a\cdots b \arrow[dr, start anchor = south east, end anchor = north west, "\beta"] 
  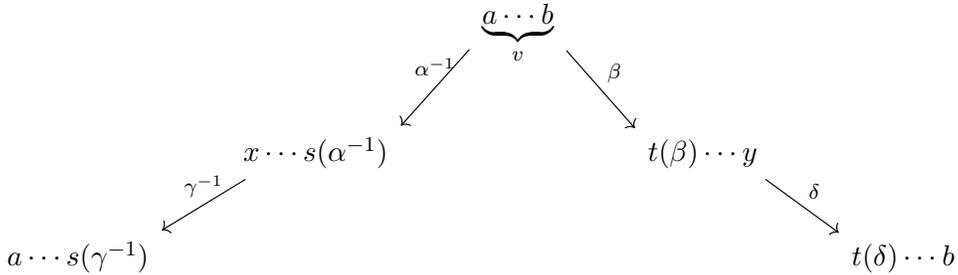
\begin{figure}[!htbp]
\[\begin{tikzcd}
 & & \underbrace{a \cdots b}_v  \arrow[dl, start anchor = {[yshift = 1ex, xshift = 0ex]}, end anchor = north east, "\alpha_{\ }^{-1}"{above}]  \arrow[dr, start anchor = {[yshift = 1ex, xshift = 0ex]}, end anchor = north west, "\beta"] & &  \\
& x \cdots s(\alpha^{-1})  \arrow[dl, start anchor = {[yshift = 0ex, xshift = -1.5ex]}, end anchor = north east, "\gamma_{\ }^{-1}"{above}] & & t(\beta) \cdots y  \arrow[dr, start anchor = {[yshift = 0ex, xshift = 1.5ex]}, end anchor = north west, "\delta"]  \\
a\cdots s(\gamma^{-1}) & & & & t(\delta) \cdots b
 %\arrow[dr, start anchor = {[yshift = 3ex, xshift = 1.5ex]}, end anchor = north west, "\beta_2"] & & \\
%& & s(\gamma_2) \cdots t(\gamma_2)
  \end{tikzcd}\]
  \caption{The diagram of $\text{im}\,f$ as a quotient of $M(u\epsilon)$.}
  \label{fig:5}
  \end{figure}

%$$M(u\epsilon)=M(u^2v): \hspace{1cm}  \overunderbraces{&&\br{3}{u}}{&a \cdots b_{\searrow} \cdots _{\swarrow} & a &  \cdots b_{\searrow} \cdots _{\swarrow} &a &\cdots b}%
%{&\br{2}{u} && \br{2}{v}}$$

%$$\textup{im}\,f \textup{ as quotient of } M(u\epsilon): \hspace{1cm}  
%\overunderbraces{&&\br{3}{v}}{a \cdots _{\swarrow} & x\cdots _{\swarrow} &a & \cdots & b& \,_{\searrow} \cdots y&   \, _{\searrow} \dots b}
%{&\br{2}{u''} &&\br{2}{v'}}$$

Since $\varepsilon_{\alpha}=-1$ and $M(\epsilon)$ is a brick, Lemma~\ref{lemma4} implies that $M(\epsilon)$ is a quotient of $M(w,\lambda, 2)$ for any $\lambda$ and, thus, also a quotient of $M(w,\lambda,N)$ for any $N \ge 2$.   By condition (C2) we then conclude that $M(\epsilon)$ must also be a quotient (and not a submodule) of some brick module $M(z)$ supported on $u^2$. %\textcolor{red}{I don't understand the previous sentence.} {\color{blue} condition (C2) says that there exists some brick module $M(z)$, with $z$ a substring of $w^N$ for some $N$ and $z$ supported on $u^2$, such that $M(\epsilon)$ is either a quotient or a submodule of $M(z)$.  Then since $M(\epsilon)$ is a quotient of $M(w, \lambda, 2)$ then it follows that $M(\epsilon)$ must also be a quotient of $M(z)$ and not a submodule.} \textcolor{red}{I guess I am confused about how we know that $M(w,\lambda,2)$ is a quotient of $M(z)$. It seems like we need to know that.} 
Since $M(z)$ is supported on $u^2$, let $z'$ be obtained from $z$ by replacing some factor of $u^2$ appearing in $z$ with $u^3$. The fact that $z$ is supported on $u^2$ allows us to write $z = u_0^2v_0$ and $z' = u_0^3v_0$ where $u_0 \sim u$. %Since $M(z)$ is a brick (C1) is satisfied, and we conclude that $M(z')$ is also a brick.  Note that $M(u\epsilon)$ is then a quotient of $M(z')$.  Thus $\text{im}\,f$ is also a quotient of $M(z')$, but not a submodule.  This implies that as a submodule $\text{im}\,f$ embeds in $M(u\epsilon)$ at an endpoint of $u\epsilon$ that is not an endpoint of $M(z')$.  We may suppose that $\text{im}\,f$ as a submodule embeds at the left endpoint of $u\epsilon$.   Otherwise, we may rewrite $\epsilon$ as $v_0u_0$ where $u_0\sim u$, and then work with $u\epsilon=v_0u_0^2$.  

Next, $M(z)$ is a brick and so (C1) is satisfied. Applying the theorem in this case to $M(z)$, we conclude that $M(z')$ is also a brick.  Note that $M(u\epsilon)$ is then a quotient of $M(z')$.  Thus $\text{im}\,f$ is also a quotient of $M(z')$, but not a submodule.  This implies that as a submodule $\text{im}\,f$ embeds in $M(u\epsilon)$ starting at an endpoint of $u\epsilon$ that is not an endpoint of $M(z')$.  We may suppose that $\text{im}\,f$ as a submodule embeds starting at the left endpoint of $u\epsilon$.   Otherwise, we may rewrite $\epsilon$ as $v_0u_0$ where $u_0\sim u$, and then work with $u\epsilon=v_0u_0^2$.

Recall that $\text{im}\,f =  M(u''vv')$ where $v',u''$ have length greater then zero, and by above $\text{im}\,f$ as a submodule embeds starting at the left endpoint of $u\epsilon$.   In particular, $u\epsilon$ starts with $u''vv'$ or its inverse $(u''vv')^{-1}$.  

If $u''vv'$ has length strictly greater than $n$, where $\ell(u)=n$, then we can similarly rewrite $u''vv'=uv_0$ or $(u''vv')^{-1}=uv_0$ where $\ell(v_0)\geq 1$.   Then $\epsilon$ starts with $v_0$, and, moreover, $M(v_0)$ is a submodule of $M(\epsilon)$ since  $\text{im}\,f$ is a submodule of $M(u\epsilon)$. Also, since $\text{im}\,f=M(uv_0)$ is a quotient of $M(u\epsilon)$ and $\varepsilon_{\alpha}=-1$ we conclude that $v_0$ is also a quotient of $M(\epsilon)$.   This yields a contradiction.

Therefore, the length of $u''vv'$ is at most $n$, where $u$ ends in $u''$ and starts with $vv'$. In this case, we can write $u=vv'z_1u''$ for some string $z_1$ with $\ell(z_1) \ge 0$.   %{\color{blue} before you had it as $\ell(z_1) \ge 1$, but it may happen that the length of $z_1$ is zero when the length of $u''vv'$ equals $n$.}  
Because $\text{im}\,f$, as a submodule, embeds at the left endpoint of $M(u\epsilon)$ and also $M(\epsilon)$, we obtain $u=u''vv'z_2$ or $u=(u''vv')^{-1}z_2$ for some string $z_2$ with $\ell(z_2)\geq 0$.  

%Note that $z_1$ ends in $u''$ hence, $z_1=z_3u''$ for some other string $z_3$.   See the diagrams above.    

%First suppose  $u=(u''vv')^{-1}z_2$.  Recall, that we also have that $u$ starts with the string $v$, so we can write $u=vz_3v'z_2$ for some string $z_3$ with $\ell(z_3)=\ell(u'')\geq 1$.

%First suppose $u=u''vv'z_2$.  Recall, that we also have that $u$ starts with the string $v$, so we can write $u=vz_3v'z_2$ for some string $z_3$ with $\ell(z_3)=\ell(u'')\geq 1$.  Then we conclude that $u''v=vz_3$

%However, we also have that $u$ starts with $v$, so $u=vv'z_1$ for some string $z_1$. Note that $z_1$ ends in $u''$ hence, $z_1=z_3u''$ for some other string $z_3$.   See the diagrams above.    

First suppose $u=(u''vv')^{-1}z_2=vv'z_1u''$.  This implies $(vv')^{-1}=vv'$, and so $v=v'=e_a$ has length zero.  This contradicts the earlier statement that $\ell(v')\geq 1$.

Now it remains to consider the case $u=u''vv'z_2 = vv'z_1u''$.  In what follows, we will rewrite $u$ so that we obtain a contradiction either to $w$ being minimal, when $u$ or a band equivalent to $u$ contains a square of an undirected string, see Lemma~\ref{lemma2}, or we obtain a contradiction to $M(\epsilon)$ being a brick, when $u$ starts with a square of a string of nonzero length, see Lemma~\ref{lemC2}.    
%Here we also note that since $M(\epsilon)$ is a quotient of $M(w,\lambda,N)$ for $N\geq 2$ then so is $M(v)$. Additionally, by the minimality of $w$, whenever a string of nonzero length contains two copies of $v$ it is not directed.  

%It remains to consider the case $u=u''vv'z_2 = vv'z_1$.  If the length $u''$ is greater then that of $v$ then $u''=vv_0$.  Thus,  $u=vv_0vv'z_2$.  Moreover, if $v_0=vv_1$ then $u$ starts in $v^2$ as desired.  Otherwise, if $v=v_0v_1$ then $u=vv_0^2v_1z_2$.  Because $v_0$ is not directed we obtain a contradiction.  

We have $u=u''vv'z_2 = vv'z_1u''$, and first suppose that  $\ell(v)\geq \ell(u'')$.  Then we can write $v=u''u_0$ for some string $u_0$, and observe that $u$ starts in $(u'')^2$, contradicting Lemma~\ref{lemC2}.  Note that $\ell(u'')\geq 1$ because of an earlier statement.  Otherwise if $\ell(v)< \ell(u'')$ we can write $u''=vv_0$ for some string $v_0$ with $\ell(v_0) \ge 1$ and obtain $u=vv_0vv'z_2=vv'z_1u''$. This means that either $v_0$ starts with $v'$ or $v'$ starts with $v_0$. If $v'$ starts with $v_0$ then $u$ starts with $(vv_0)^2=(u'')^2$, and we obtain a contradiction to Lemma~\ref{lemC2}. Otherwise, if $v_0$ starts with $v'$, then $v_0=v'v_2$ for some string $v_2$ with $\ell(v_2) \ge 0$. % \textcolor{red}{How do we obtain the equation in the previous sentence? Is it obvious that $v_0$ is long enough for $v'$ to be an initial substring? It is clear to me that some initial substring of $v'$ is an initial substring of $v_0$.}  {\color{blue} I added some text in cyan to explain this i.e. since $u=vv_0vv'z_2=vv'z_1u''$ then $v_0vv'z_2=v'z_1u''$.  If $\ell(v_0)\geq \ell(v')$ then $v_0$ starts with $v'$ and if $\ell(v')>\ell(v_0)$ then $v'$ starts with $v_0$.} 
Hence, 
$z_1u''=z_1vv'v_2$ and
$$u=vv'v_2vv'z_2=vv'z_1u''=vv'z_1vv'v_2.$$  
If $\ell(v_2)\leq \ell(z_1)$, then $z_1=v_2z_3$ for some string $z_3$. Then 
$$u\sim v_2 vv'z_1vv'\sim (vv'v_2)^2z_3$$
This gives a contradiction to $w$ being minimal provided that $vv'v_2$ is not directed, see Lemma~\ref{lemma2}.  

%\textcolor{red}{I found that $u \sim (vv'v_2)^2z_3$, instead of $u \sim (v_2vv')^2z_3$.}  {\color{blue} you are absolutely right! I updated this in colour}

Next, we prove that $vv'v_2$  is not directed by showing that this string contains an arrow and an inverse arrow.   First, observe that $\ell(vv'v_2)\geq 1$ since $\ell(v')\geq 1$.  Since $u$ ends in $vv'v_2$ and also $u$ ends in an inverse arrow $\alpha^{-1}$ then we conclude that $vv'v_2$ contains an inverse arrow $\alpha^{-1}$.    Because $M(\epsilon)$ is a quotient of $M(w,\lambda,N)$ for $N\geq 2$ then so is $M(v)$.   Then $v'$ starts in an arrow, because $u$ starts with $vv'$ and $M(v)$ is a quotient of $M(\epsilon)=M(uv)$.  This means that $vv'v_2$ contains an arrow.  This shows the claim  that $vv'v_2$ is not directed. 

Finally, if $\ell(v_2)>\ell(z_1)$ we can write $v_2=z_1v_3$. Then  $u$ starts with $(vv'z_1)^2$, contradicting Lemma~\ref{lemC2}. 
This completes the proof in the case (C2).
\end{proof}

\section{Maximal green sequences}\label{sec:mgs}

In this section we present the main results of the paper. First, we prove that no module supported on a square of a band may appear on a maximal green sequence.  As a consequence, we conclude that domestic string algebras admit at most finitely many maximal green sequences.  Then we identify certain string algebras that admit at least one maximal green sequence.  Here, we find a particular ordering on the simple $\Lambda$-modules that ensures it can be completed to a maximal green sequence.

We begin with the following lemma prohibiting certain string modules from appearing on a maximal green sequence. It is a special case of the more general result appearing in the next theorem.

\begin{lem}\label{lemmaw2}
Let $\Lambda$ be a string algebra. Suppose $w$ is a minimal band and $\epsilon=u^kv$ where $k\geq 2$, $u\sim w$, and $v$ is a proper substring of $u$.  Then the module $M(\epsilon)$ cannot lie on a maximal green sequence for $\Lambda$. %\textcolor{red}{Instead of making this edit above we could also just cite Lemma 3.1 to say that we can ignore case where $v$ is length zero. What do you think?}  {\color{blue}  So actually Lemma 3.1 only applies to bands i.e. modules of the form $M(u, \lambda, k)$ which is a band module only if $\lambda \not=0$.  If $\lambda=0$ then we get a string module instead, and Lemma 3.1 does not apply. Maybe I am missing something, but where are we using the fact that the length of $v$ is nonzero?} \textcolor{red}{I guess you are correct. We don't appear to be using it. I retract my comment.}
\end{lem}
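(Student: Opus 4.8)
The plan is to use the characterization of maximal green sequences via complete forward hom-orthogonal sequences (Theorem~\ref{MGS_thm}) together with the structural results on bricks proved in Section~\ref{sec:bricks}. The key idea is that if $M(\epsilon)$ with $\epsilon = u^k v$, $k \ge 2$, appeared on a maximal green sequence, then in particular $M(\epsilon)$ must be a brick; I will show this forces the existence of a smaller brick $M(\epsilon')$ supported on $u$ whose presence near $M(\epsilon)$ on the sequence violates the hom-orthogonality or non-refinability conditions. First I would invoke Lemma~\ref{lemma1} to locate $\epsilon$ as a maximal $w$-substring of $\gamma$ where $M = M(\gamma)$ is the relevant brick, and deduce that $M(\epsilon)$ is itself a brick (since a module on a maximal green sequence is a brick, being the unique brick associated to a covering relation as recalled before Lemma~\ref{lemma-bands}).

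Next I would apply Lemma~\ref{lemmaBB}, condition (B1) (using $k \ge 2$), to pass from the brick $M(\epsilon) = M(u^k v)$ to the conclusion that the maximal $w$-substrings extracted from it remain bricks; more precisely, the main engine is that we can peel off copies of $u$. Concretely, I expect to argue as follows: since $M(\epsilon)$ is a brick supported on $w^k$ with $k \ge 2$, repeated application of Lemma~\ref{lemmaBB} shows that $M(u^2 v)$ (or $M(u^j v)$ for intermediate $j$) are bricks, and then I would use Lemma~\ref{lemmaCC}, condition (C1), in reverse spirit — or rather directly — to understand how these bricks sit inside one another as submodules and quotients. The crucial point is that $M(u^{k-1}v)$ is simultaneously a submodule (or quotient) of $M(u^k v) = M(\epsilon)$, so there is a nonzero non-invertible map between them in one direction; the contradiction with $M(\epsilon)$ being on a maximal green sequence should come from the fact that such a configuration of nested bricks cannot be refined away, or produces a hom from a later brick to an earlier one.

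The cleanest route is probably: suppose $M(\epsilon)$ lies on a maximal green sequence, so it is the brick of a covering relation $\mathcal{T}(N_1) \lessdot \mathcal{T}(N_2)$ and is the unique smallest brick in $\mathcal{T}(N_2) \setminus \mathcal{T}(N_1)$. Using Lemma~\ref{lemma4}, $M(\epsilon)$ (being a brick supported on a substring of $w^{k}$, $k \ge 2$) is a submodule or quotient of $M(w, \lambda, k+1)$ for every $\lambda \in K^*$. Combined with Lemma~\ref{lemma_sub-quo} and Lemma~\ref{lemmaBB}(B1), I get a strictly smaller brick $M(\epsilon')$ with $\epsilon' = u^{k-1}v$ (still supported on $w$, since $k - 1 \ge 1$) that is a submodule or quotient of $M(\epsilon)$. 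Since $M(\epsilon)$ is the unique smallest brick in $\mathcal{T}(N_2)\setminus \mathcal{T}(N_1)$ and $M(\epsilon')$ has strictly smaller dimension, $M(\epsilon')$ cannot lie in $\mathcal{T}(N_2) \setminus \mathcal{T}(N_1)$; but a submodule argument (torsion classes are closed under quotients, and one checks the relevant closure for the submodule case by passing to a suitable cover) shows $M(\epsilon')$ must lie on the sequence or force a refinement, the contradiction.

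The main obstacle I anticipate is handling the submodule-versus-quotient dichotomy uniformly: Lemma~\ref{lemma1} only tells us $M(\epsilon)$ is a submodule \emph{or} a quotient of $M(\gamma)$, and similarly $M(\epsilon')$ sits inside $M(\epsilon)$ on one side or the other, so I will need to track these cases and make sure that in each the conclusion "$M(\epsilon')$ must also appear on the maximal green sequence, contradicting minimality of $M(\epsilon)$" actually goes through — in the quotient case this is immediate from closure of torsion classes under quotients, but the submodule case requires a dual argument (working with the torsion-free class, or with the interval in $\textsf{tors}\Lambda$ and the associated brick being forced smaller). Getting this dual argument precisely right, and confirming that the hypotheses of Lemma~\ref{lemmaBB}(B1) and Lemma~\ref{lemma4} are genuinely met at each step (in particular that $k - 1 \ge 1$ keeps us "supported on $w$"), is where the real care is needed; the rest is bookkeeping with the lemmas already established.
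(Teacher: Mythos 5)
Your proposal does not match the paper's proof, and it has a real gap in its central step. The paper's argument goes in the \emph{opposite direction}: rather than "peeling off" a copy of $u$ to obtain a smaller brick $M(u^{k-1}v)$, it uses Lemma~\ref{lemmaCC}(C1) to \emph{enlarge} $M(\epsilon)$ to the brick $M(u\epsilon)$, and the contradiction comes from an iterative argument along the maximal green sequence, not from the minimality of the covering-relation brick. Concretely: the paper picks $M(\epsilon)$ rightmost in $\mathcal{M}$ among modules of the given form, builds the short exact sequence $0 \to \ker\pi \to M(u\epsilon) \to M(\epsilon) \to 0$, shows $\text{Hom}(M_{-i}, M(u\epsilon)) = 0$ for $i \ge 0$ by a factorization argument through $\ker\pi$, then deduces there must be $M_{i_1}$ strictly to the right of $M(\epsilon)$ with a nonzero map $f_1 : M(u\epsilon) \to M_{i_1}$; via Lemma~\ref{lemma_sub-quo} and Lemma~\ref{lemmaBB}(B2), $\text{im}\,f_1$ is a brick, and one is forced to have $\text{im}\,f_1 = M_{i_1}$ and $k = 2$. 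Now Lemma~\ref{lemmaCC}(C2) applies to produce another brick $M(u_2^2 v_2)$ that again cannot fit anywhere, forcing yet another $M_{i_2}$ further to the right, and so on indefinitely, contradicting the finiteness of $\mathcal{M}$.

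Your proposed route has two specific problems. First, nothing in the paper (and nothing you can get from Lemma~\ref{lemmaBB}(B1)) establishes that $M(u^{k-1}v)$ is a brick: Lemma~\ref{lemmaBB} extracts a \emph{maximal $w$-substring} $\epsilon$ from a larger brick $M(\gamma)$ supported on $w^k$; but in your situation $\gamma = \epsilon = u^k v$ is already the whole string, so the only "maximal $w$-substring" is $\epsilon$ itself, and $u^{k-1}v$ is not a maximal $w$-substring of $u^k v$. Second, even if $M(u^{k-1}v)$ were a brick, the claim that a smaller brick being a quotient or submodule of $M(\epsilon)$ "forces a refinement" or contradicts the unique-smallest-brick property of a covering relation is not established and does not follow: the quotient $M(u^{k-1}v)$ lies in $\mathcal{T}(N_2)$ by closure under quotients, but nothing rules out $M(u^{k-1}v) \in \mathcal{T}(N_1)$, in which case there is no tension with uniqueness in $\mathcal{T}(N_2) \setminus \mathcal{T}(N_1)$. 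The obstacle you flag (the submodule vs.\ quotient dichotomy) is indeed present but is the manageable part; the missing idea is that one must pass to the \emph{larger} brick $M(u\epsilon)$ and then argue by repeatedly moving rightward through $\mathcal{M}$, using Lemmas~\ref{lemma_sub-quo}, \ref{lemmaBB}(B2), and \ref{lemmaCC}(C2) at each step.
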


\begin{proof}
%\textcolor{red}{How about we just use $M(\epsilon)$ and $M_0$? Maybe also using $M$ gets confusing?} {\color{blue} Yes!  I removed $M$. } \textcolor{red}{Thanks!}

Suppose $\mathcal{M}$ is a maximal green sequence for $\Lambda$ in which a module $M(\epsilon)$ that satisfies the conditions in the statement of the lemma appears. Such a module $M(\epsilon)$ is necessarily a brick, and Lemma~\ref{lemma4} implies that $M(\epsilon)$ is a quotient or a submodule of $M(w, \lambda, k+1)$ for any $\lambda$. Suppose $M(\epsilon)$ is a quotient, and the other case follows dually. By possibly replacing $M(\epsilon)$ with another module satisfying the conditions in the statement of the lemma, we can assume that $M(\epsilon)$ is rightmost such module in $\mathcal{M}$. The complete forward hom-orthogonal sequence $\mathcal{M}$ may thus be written as follows
$$\mathcal{M}:\,\,\,  M_{-q}, M_{-q+1}, \dots, M_{-1}, M_0=M(\epsilon), M_1, \dots, M_{p-1}, M_p$$
where $p,q\geq 0$.  Condition (C1) of Lemma~\ref{lemmaCC} is satisfied by $M(\epsilon)$, so $M(u\epsilon)$ is also a brick.  Moreover, since $M(\epsilon)$ is a quotient of $M(w, \lambda, k+1)$, the module $M(\epsilon)$ is also a quotient of $M(u\epsilon)$ and we have the following short exact sequence in $\Lambda$-mod: 
$$0\to \text{ker}\,\pi \xrightarrow{\iota} M(u\epsilon) \xrightarrow{\pi} M(\epsilon)\to 0.$$
Note that $\text{ker}\,\pi$ is also a submodule of $M(\epsilon)$, and let $j:\text{ker}\,\pi \to M(\epsilon)$ denote this inclusion.

Now we claim that $\text{Hom}(M_{-i}, M(u\epsilon))=0$ for all $i\geq 0$.  Suppose on the contrary that there exist some nonzero $f: M_{-i}\to M(u\epsilon)$ for some $i\geq 0$.  Then $\pi f : M_{-i} \to M(\epsilon)$ must be zero as $M_{-i}$ for $i>0$ lies to the left of $M(\epsilon)$ on a maximal green sequence.  For $i=0$ the composition $\pi f=0$ because $(\epsilon)=M_0$ is a brick. 
%\textcolor{red}{I think the fact that $M$ is a brick only shows that $\pi f$ is zero or an isomorphism. I'm not sure how to show that it can't be an isomorphism. I want to be able to show that $f$ is injective when $i = 0$, but I don't see how. If we could do this, then we could use that $M(u\epsilon)$ is a brick to conclude that $\pi f = 0$.}  
%{\color{blue}  This caused me some problems too when I was writing this up, but then I realized this was "obvious".  So suppose $\pi f$ is instead an isomorphism.  Then $M_{-i}\cong M(\epsilon)$ and in particular the map $f$ is an inclusion.  We know precisely what $\pi$ does, it takes $M(\epsilon)$ at the very end of $M(u\epsilon)$ and surjects it onto $M(\epsilon)$.  Now if $\pi f$ is an isomorphism then $f$ must take $M(\epsilon)=M_{-i}$ and map it injectively at the very end of $M(u\epsilon)$.  But then $M(\epsilon)$ appearing at the very end of $M(u\epsilon)$ would have to be both a submodule and a quotient of $M(u\epsilon)$, which means $u=0$ and we get a contradiction.}
%\textcolor{red}{Thanks for the explanation!}
Therefore, $\pi f=0$ which means that $f$ factors through  $\text{ker}\,\pi$.  Thus, there exists $g:M_{-i}\to \text{ker}\,\pi$ such that $\iota g=f$.  Then the composition $jg: M_{-i}\to M(\epsilon)$ must again be zero by the same reasoning as before.  Because $j$ is injective, $g=0$ which implies $f=0$, as desired. This shows the claim that $\text{Hom}(M_{-i}, M(u\epsilon))=0$ for all $i\geq 0$.

%\textcolor{red}{maybe we could use $\iota: \text{ker}\, \pi \to M(u\epsilon)$ instead of $i$ since we also use $i$ for $M_{-i}$.}  {\color{blue} good suggestion! I changed $i$ for $\iota$.}

If $M(u\epsilon)$ where also part of $\mathcal{M}$ then it would have to lie to the right of $M(\epsilon)$ because $M(u\epsilon)$ surjects onto $M(\epsilon)$.  This contradicts the assumption of $M(\epsilon)$ being rightmost in $\mathcal{M}$.  Therefore, $M(u\epsilon)$ cannot lie on this maximal green sequence.  Because $M(u\epsilon)$ is a brick, if we add $M(u\epsilon)$ to $\mathcal{M}$ directly to the right of $M(\epsilon)$ then the resulting sequence cannot remain forward hom-orthogonal.

Since $\text{Hom}(M_{-i}, M(u\epsilon))=0$ for all $i\geq 0$, there must exist $M_{i_1}$ for $i_1>0$ such that we have $\text{Hom}(M(u\epsilon), M_{i_1})\not=0$. Choose $i_1$ to be maximal and some nonzero $f_1: M(u\epsilon) \to M_{i_1}$ such that $\text{im}\,f_1$ is indecomposable. 
Observe that $\text{Hom}(M(\epsilon), M_{i_1})=0$. Therefore, $\text{im}\,f_1$ is not a quotient of $M(\epsilon)$. This implies that $\text{im}\,f_1 = M(v''u^{k-1}vv')$ for some strings $v'', v'$ of nonzero length.
That is, $\text{im}\,f_1$ must be a quotient of $M(u\epsilon)$ but not a quotient of $M(\epsilon)$.  Since $\text{im}\,f_1$ is a quotient of $M(w, \lambda, {k+2})$ and a submodule of $M_{i_1}$, Lemma~\ref{lemma_sub-quo} implies that $v''u^{k-1}vv'$ is a maximal $w$-substring of the string associated to $M_{i_1}$.  Since $M_{i_1}$ is a brick and $\text{im}\,f_1$ is a quotient of a brick $M(u\epsilon)$ condition (B2) of Lemma~\ref{lemmaBB} is satisfied and we conclude that $\text{im}\,f_1$ is also a brick.

Observe that there are no nonzero morphisms $M_{i}\to \text{im}\,f_1$  for any module $M_i$ in $\mathcal{M}$ with $i\leq i_1$.  Otherwise composing such map with the inclusion $\text{im}\,f_1 \to M_{i_1}$ results in a contradiction to $\mathcal{M}$ being a maximal green sequence.  There are is also no nonzero map from $\text{im}\,f_1$ to any module in $\mathcal{M}$ to the right of $M_{i_1}$, because composing such map with the surjection $M(u\epsilon)\to \text{im}\,f_1$ yields a map from $M(u\epsilon)$ to a module to the right of $M_{i_1}$ contradicting the maximality of $i_1$.  This shows that we can place $\text{im}\,f_1$ directly to the right of $M_{i_1}$ in $\mathcal{M}$ and preserve the properties of $\mathcal{M}$ being a forward hom-orthogonal sequence.   This yields a contradiction to $\mathcal{M}$ being a complete forward hom-orthogonal sequence, unless $\text{im}\,f_1 = M_{i_1}$.  %%If $k>2$, this contradicts the fact that $M(\epsilon)$ is rightmost in $\mathcal{M}$ of this particular form.  Thus, we conclude $k=2$ and $\text{im}\,f_1 = M(v''uvv')=M_{i_1}$.   Also, $\ell(v''uvv')<\ell(u^2)$ because $M(\epsilon)$ is the rightmost module in $\mathcal{M}$ supported on a square of a minimal band. 

For $k>2$, let $u = z v''$ for some string $z$.  Let $u_2=v''z$ and then $u \sim u_2$, and $v''u^{k-1}vv' = (v''z)^rv_2=u_2^rv_2$ with $r \ge k-1 \ge 2$ %\textcolor{red}{(I think $r$ can be $k$ or $k+1$ depending on the length of $v'$.)} {\color{blue} I think that $r$ can also be $k-1$ when the length of $v''vv'$ is less then the length of $u$. } \textcolor{red}{Ah, good point.} 
and $v_2$ a proper substring of $u_2$. Therefore, $\text{im}\,f_1$ is a string module given by a string that satisfies the conditions in the statement of the lemma. This contradicts the fact that $M(\epsilon)$ is rightmost in $\mathcal{M}$ of this particular form.  Thus, we conclude $k=2$ and $\text{im}\,f_1 = M(u_2v_2)=M_{i_1}$ where $\ell(u_2v_2)<\ell(u^2)$ because $M(\epsilon)$ is the rightmost module in $\mathcal{M}$ supported on a square of a minimal band. 

%\textcolor{red}{I attempted to change the above paragraph to give some additional explanation as to why $\text{im}\, f_1$ is given by a string that has the properties in the statement of the theorem. Oops, I just realized that I used $v_2$ above, and you have used it below.}  {\color{blue}  Looks good, I replaced $v_2$ above with $z$.} \textcolor{red}{Thank you!}

%Then we may write $v''uvv'=u_2v_2$ for some $u_2\sim u$ and $v_2$ a proper substring of $u_2$.  
Since $M_{i_1}$ is a quotient of a brick $M(u\epsilon)$ supported on $u_2^2$, condition (C2) of Lemma~\ref{lemmaCC} is satisfied, and we conclude that $M(u_2^{2}v_2)$ is also a brick.  Moreover, $M(u_2^{2}v_2)$ maps surjectively onto $M_{i_1}= M(u_2v_2)$ where the kernel of this surjection is a submodule of $M_{i_1}$. 
%\textcolor{red}{I am having trouble understanding why $M(u_2^2v_2)$ maps surjectively onto $M_{i_1}$.}  {\color{blue} Because $\text{im}\, f_1=M(u_2v_2)$ is a quotient of $M(w, \lambda, k+2)$,  it follows that $\alpha^{-1}u_2v_2\beta$ for some arrows $\alpha, \beta$ is a substring of the band $w^{k+2}$.  Then $u_2$ ends in the inverse arrow $\alpha$.  This shows $M(u_2v_2)$ is a quotient of $M(u_2^2v_2)$.} \textcolor{red}{Thanks! This makes sense.}
By the same reasoning as before there are no nonzero maps $M_{i}\to M(u_2^2v_2)$ for any module $M_i$ in $\mathcal{M}$ with $i\leq i_1$.  Moreover, $M(u_2^2v_2)$ cannot lie on $\mathcal{M}$ because it would lie to the right of $M_{i_1}$ and hence to the right of $M(\epsilon)$.  This means that if we add $M(u_2^2v_2)$ to $\mathcal{M}$ directly to the right of $M_{i_1}$ then the resulting sequence cannot remain forward hom-orthogonal.  Thus, there exists a nonzero map $f_2: M(u_2^2v_2)\to M_{i_2}$ for some $i_2>i_1$ and we pick $i_2$ to be maximal.  Similarly to above, we conclude $\text{im}\,f_2=M_{i_2}=M(u_3v_3)$ where $u_3\sim u_2$ and $v_3$ is a substring of $u_3$.  Then again we construct $M(u_3^2v_3)$ which is a brick and has a surjective map onto $M(u_3v_3)$.  We can continue this procedure moving to the right all the time.  Eventually, this process must stop and we obtain a contradiction, because the sequence $\mathcal{M}$ is finite.  This shows that $M(\epsilon)$ cannot lie on a maximal green sequence. 
\end{proof}

The following theorem says that no module supported on a square of a band can appear on a maximal green sequence.

\begin{thm}\label{thm-mgs}
Let $M$ be an indecomposable module over a string algebra $\Lambda$.  If $M$ is a string module that is supported on $w^2$, for some band $w$, then $M$ cannot lie on a maximal green sequence. 
\end{thm}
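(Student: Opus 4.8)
The plan is to reduce to Lemma~\ref{lemmaw2}. By Theorem~\ref{MGS_thm} every module lying on a maximal green sequence is a brick, so I may assume $M = M(\gamma)$ is a brick; otherwise there is nothing to prove.

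First I would reduce to the case that $w$ is a \emph{minimal} band. If $w$ is not minimal, there are a band $w' \sim w$ and a band $v$ with $v^m$ a substring of $w'$ for some $m \ge 2$. Since $u^2$ is a substring of $\gamma$ for some $u \sim w$, and $w'$ is a cyclic rotation of $u$ or of $u^{-1}$ and hence a substring of $u^{\pm 2}$, the string $v^m$, and in particular $v^2$, is again a substring of $\gamma$; thus $M$ is supported on $v^2$. As $\ell(v) \le \ell(w')/m < \ell(w)$, repeating this replacement strictly decreases the length of the band, so after finitely many steps we may assume $w$ is minimal.

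Fix an occurrence of $u^2$ in $\gamma$ with $u \sim w$ and let $\epsilon$ be the maximal $w$-substring of $\gamma$ containing it; write $\epsilon = u^k v$ with $u \sim w$ and $v$ a proper substring of $u$, so that $\ell(\epsilon) \ge 2\ell(w)$ forces $k \ge 2$. By Lemma~\ref{lemmaBB}(B1) the module $M(\epsilon)$ is a brick, by Lemma~\ref{lemma1} it is a submodule or a quotient of $M$, and by Lemma~\ref{lemmaw2} it lies on no maximal green sequence. Suppose for contradiction that $M$ lies on a maximal green sequence $\mathcal{M}$, corresponding to a finite maximal chain $0 = \mathcal{T}_0 \lessdot \cdots \lessdot \mathcal{T}_n = \Lambda$-mod in which $M$ is the brick label of $\mathcal{T}_p \lessdot \mathcal{T}_{p+1}$. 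It suffices to treat the case that $M(\epsilon)$ is a quotient of $M$: the submodule case follows by passing to $\Lambda^{\mathrm{op}}$, which is again a string algebra, under which the $K$-dual of $M(\gamma)$ is a string module supported on the square of a band, maximal green sequences correspond to maximal green sequences, and an inclusion $M(\epsilon) \hookrightarrow M$ dualizes to a surjection. If $\epsilon = \gamma$ we are done by Lemma~\ref{lemmaw2}; otherwise $\ell(\epsilon) < \ell(\gamma)$, so $\dim_K M(\epsilon) < \dim_K M$. Since $M(\epsilon)$ is a quotient of $M \in \mathcal{T}_{p+1}$ we have $M(\epsilon) \in \mathcal{T}_{p+1}$, and since $M$ is the unique smallest brick in $\mathcal{T}_{p+1} \setminus \mathcal{T}_p$, this strict dimension inequality forces $M(\epsilon) \in \mathcal{T}_p$.

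It then remains to contradict the fact that the brick $M(\epsilon)$ lies in the torsion class $\mathcal{T}_p$, which is reachable from $0$ by a finite chain. The approach I would take is to show that $M(\epsilon)$ occurs as a brick label of a covering relation inside the interval $[0, \mathcal{T}_p]$ of $\mathsf{tors}\,\Lambda$, that this interval (having one finite maximal chain) has a finite maximal chain through that covering relation, and that concatenating it with the upper part $\mathcal{T}_p \lessdot \cdots \lessdot \mathcal{T}_n$ of $\mathcal{M}$ produces a maximal green sequence on which $M(\epsilon)$ lies, contradicting Lemma~\ref{lemmaw2}. I expect this last step to be the main obstacle: it requires knowing that every brick in a torsion class that is reachable by a finite chain is the label of a covering relation inside some finite maximal chain below it. An alternative and likely more self-contained route, closer in spirit to the proof of Lemma~\ref{lemmaw2}, is to imitate its ``enlarge by one band period and march rightward'' argument directly with $M(\gamma)$ in place of $M(\epsilon)$; this would require an enlargement statement of the same flavour as Lemmas~\ref{lemmaBB} and~\ref{lemmaCC}, namely that inserting an extra copy of $u$ into the $w$-supported region of the brick $M(\gamma)$ again produces a brick, and that is where I would expect most of the technical work to concentrate.
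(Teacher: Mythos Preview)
Your reductions are sound: passing to a minimal band, extracting the maximal $w$-substring $\epsilon=u^kv$ with $k\ge 2$, and invoking (B1) to see that $M(\epsilon)$ is a brick all match the paper. The $\Lambda^{\mathrm{op}}$ reduction from the submodule case to the quotient case is also legitimate.

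The genuine gap is exactly where you flag it. Knowing that the brick $M(\epsilon)$ lies in $\mathcal{T}_p$ does not by itself produce a \emph{finite} maximal chain in $[0,\mathcal{T}_p]$ on which $M(\epsilon)$ occurs as a label. The interval $[0,\mathcal{T}_p]$ has one finite maximal chain (the lower part of $\mathcal{M}$), but nothing you have invoked guarantees a finite maximal chain through an arbitrary prescribed covering relation, and in general lattices of torsion classes this can fail. So the torsion-class route, as stated, does not close.

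Your alternative is closer to what the paper actually does, but you are pointing the enlargement at the wrong module. The paper does \emph{not} enlarge $M(\gamma)$; it runs the ``march rightward'' argument starting from $M(\epsilon)$ itself. In the submodule case (which the paper treats), the inclusion $M(\epsilon)\hookrightarrow M$ immediately gives $\mathrm{Hom}(M_{-i},M(\epsilon))=0$ for all $i\ge 0$, so one tries to insert $M(\epsilon)$ just to the right of $M$. Lemma~\ref{lemmaw2} says $M(\epsilon)$ cannot sit on $\mathcal{M}$, so completeness forces a nonzero $f_1\colon M(\epsilon)\to M_{i_1}$ with $i_1>0$; for this not to factor through $M$, the image must overrun an endpoint of $\epsilon$ inside $\gamma$ and hence is again supported on $u^{k-1}$. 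Then (B2) makes $\mathrm{im}\,f_1$ a brick and (C2) lets one enlarge $\mathrm{im}\,f_1$ by one band period and repeat, producing an infinite rightward march and a contradiction. In your quotient case the same argument runs leftward: the surjection $M\twoheadrightarrow M(\epsilon)$ gives $\mathrm{Hom}(M(\epsilon),M_i)=0$ for $i>0$, and one marches to the left instead. No brick-enlargement of $M(\gamma)$ is needed.
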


\begin{proof}
We proceed in a similar manner as in the proof of Lemma~\ref{lemmaw2}.
Suppose $\mathcal{M}$ is a maximal green sequence for $\Lambda$ in which a brick module $M=M(\gamma)$ supported on $w^2$ appears.  Let $\epsilon$ be a maximal $w$-substring of $\gamma$.  By Lemma~\ref{lemma1}, the module $M(\epsilon)$ is a submodule or a quotient of $M(\gamma)$. We suppose the former, and the other case follows similarly.   By possibly replacing $M(\gamma)$ with another module, we can assume that $M(\gamma)$ is a rightmost module in $\mathcal{M}$ supported on a square of some band and such that a maximal substring of $\gamma$ associated to this band gives rise to a submodule of $M(\gamma)$.

The sequence $\mathcal{M}$ may be written as follows
$$\mathcal{M}:\,\,\,  M_{-q}, M_{-q+1}, \dots, M_{-1}, M_0=M, M_1, \dots, M_{p-1}, M_p$$
where $p,q\geq 0$. Note that by definition of a minimal band, if $M$ is supported on a square of a band then it is also supported on $w^2$, where $w$ is minimal.  Therefore, we may suppose that $w$ is minimal. 

Condition (B1) of Lemma~\ref{lemmaBB} holds and we conclude that $M(\epsilon)$ is a brick.   Let $\epsilon=u^kv$ where $u\sim w$, $k\geq 2$, and $v$ is a proper substring of $u$.  Moreover, by Lemma~\ref{lemmaw2} we may assume $M\not=M(\epsilon)$, and then by Lemma~\ref{lemma4}  we conclude that $M(\epsilon)$ is a quotient of $M(w, \lambda, k+1)$.

Observe that $\text{Hom}(M_{-i}, M(\epsilon))=0$ for all $i\geq 0$.  Indeed, the case $i=0$ follows because $M$ is a brick, and the case $i>0$ follows because $\text{Hom}(M_{-i}, M_0)=0$ and $M(\epsilon)$ is a submodule of $M=M_0$.   Recall that $M(\epsilon)$ cannot lie on a maximal green sequence by Lemma~\ref{lemmaw2}. Then if we add $M(\epsilon)$ to $\mathcal{M}$ directly to the right of $M$, the resulting sequence cannot be forward hom-orthogonal. Thus, there exists a nonzero map $f_1: M(\epsilon)\to M_{i_1}$ for some $i_1>0$.  Moreover, we may pick $i_1$ to be maximal, and we may assume that $\text{im}\,f_1$ is indecomposable.  Now, $\text{im}\,f_1=M(\gamma_1)$ for some string $\gamma_1$ is a quotient of $M(\epsilon)$. For it not to induce a nonzero map $M\to M_{i_1}$, the string $\gamma_1$ must appear at an endpoint of $\epsilon$ that is not an endpoint of $\gamma$ and $\gamma_1$ must be supported on $u^{k-1}$.  Then $\gamma_1$ equals $u^{k-1}vv'$ or $u'u^{k-1}v$ for some strings $u',v'$.  In either case $\text{im}\,f_1$ is supported on $w^{k-1}$ where $w$ is a band. In addition, $\text{im}\,f_1$ is a quotient of $M(w, \lambda, k)$, because $M(\epsilon)$ is a quotient of $M(w,\lambda, k+1)$.  By Lemma~\ref{lemma_sub-quo} the string $\gamma_1$ is a maximal $w$-substring of $\gamma_2$ where $M_{i_1}=M(\gamma_2)$.   Then by condition (B2) of Lemma~\ref{lemmaBB} we conclude that $\text{im}\,f_1$ is a brick.  By (C2) of Lemma~\ref{lemmaCC} we also have that $M(\gamma_1')$, where $\gamma'_1$ is obtained from $\gamma_1$ by replacing $u^{k-1}$ with $u^k$, is also a brick.  Moreover, $M(\gamma_1')$ surjects onto $\text{im}\,f_1$ with kernel being a submodule of $\text{im}\,f_1$.  This implies that there are no nonzero maps $M_i\to M(\gamma_1')$ for $i\leq i_1$ in the sequence $\mathcal{M}$, because $\text{Hom}(M_i, M_{i_1})=0$ for $i<i_1$ and $M_{i_1}$ is a brick.  Moreover, by Lemma~\ref{lemmaw2} the module $M(\gamma_1')$ cannot lie on a maximal green sequence.  

Hence, there exists some map $f_2: M(\gamma_1') \to M_{i_2}$ for some $i_2>i_1$ and we choose $i_2$ to be maximal.  Again we can continue the same argument replacing $f_1$ with $f_2$, and proceeding in the same way until we obtain a contradiction.  The process must stop as $\mathcal{M}$ contains only finitely many modules.  This proves the theorem. 
\end{proof}

Theorem~\ref{thm-mgs} can be used in constructing maximal green sequences as follows.  Generally, one starts with some initial chain of torsion classes, or equivalently an initial segment of a complete FHO sequence of bricks, and obtains the next element in the chain via mutation.  If this new element corresponds to a string module supported on a square of a band, then it follows that such initial segment cannot be completed to a maximal green sequence.  In this case, one needs to go back and choose a different mutation in the hope of obtaining a maximal green sequence.  This result has similar flavor to the one obtained in \cite[Theorem 1]{brustle2017semi} for maximal green sequences of path algebras coming from acyclic quivers.  It says that if there are multiple arrows from vertex $i$ to vertex $j$  then mutating at vertex $j$ before vertex $i$ can never be completed to a maximal green sequence. 

We remark that the bound $k=2$ in Theorem~\ref{thm-mgs} is sharp.  Below we provide an example of a maximal green sequence containing a module $M$ supported $w$ where $w$ is a band.   

\begin{ex}
Let $\Lambda$ be given by the following quiver with relations $\alpha_1\beta_1=\alpha_2\beta_2=0$. 
$$\xymatrix@C=20pt@R=20pt{
5\ar[r]^{\alpha_1} &1\ar[rr]^{\beta_1} \ar[dr]_{\beta_2} &&2\\
4\ar[ur]_{\alpha_2}&&3\ar[ur]_{\delta}}$$

The sequence $\mathcal{M}$ given below is a maximal green sequence for $\Lambda$, and note that the module $\begin{smallmatrix}5\;\;\;\\1\;\;4\\3\;1\\2\end{smallmatrix}=M(\alpha_1\beta_2\delta\beta_1^{-1}\alpha_2^{-1})$ is supported on a band $\beta_2\delta\beta_1^{-1}$.  Here we represent modules via their composition factors. 

$$\mathcal{M}: \,\,\, 
{\begin{smallmatrix}4\end{smallmatrix}},\hspace{.2cm}
{\begin{smallmatrix}5\end{smallmatrix}},\hspace{.2cm}
{\begin{smallmatrix}5\;4\\1\end{smallmatrix}},\hspace{.2cm}
{\begin{smallmatrix}4\\1\end{smallmatrix}},\hspace{.2cm}
{\begin{smallmatrix}5\\1\end{smallmatrix}},\hspace{.2cm}
{\begin{smallmatrix}5\\1\\3\end{smallmatrix}},\hspace{.2cm}
{\begin{smallmatrix}5\;\;\;\\1\;\;4\\3\;1\\2\end{smallmatrix}},\hspace{.2cm}
{\begin{smallmatrix}5\\1\\3\\2\end{smallmatrix}},\hspace{.2cm}
{\begin{smallmatrix}4\\1\\2\end{smallmatrix}},\hspace{.2cm}
{\begin{smallmatrix}1\end{smallmatrix}},\hspace{.2cm}
{\begin{smallmatrix}1\\2\end{smallmatrix}},\hspace{.2cm}
{\begin{smallmatrix}2\end{smallmatrix}},\hspace{.2cm}
 {\begin{smallmatrix}1\\3\end{smallmatrix}},\hspace{.2cm}
 {\begin{smallmatrix}3\end{smallmatrix}}
$$

\end{ex}

Observe that Theorem~\ref{thm-mgs} prohibits infinite families of modules from lying on a maximal green sequence.  However, for general string algebras there are still infinitely many other brick modules that can be supported on $w$ and not on $w^2$ where $w$ is a band.  This allows for the non-exitense of a maximal green sequence or even the existence of infinitely many maximal green sequences for a given string algebra.  In the following special case of a domestic string algebra, we show that the latter scenario is not possible.

First we recall a few definitions.  Let $\Lambda=KQ/I$ be a finite dimensional string algebra.  Its simple module at vertex $a\in Q_0$ is denoted by $S(a)$.  Thus, $S(a)=M(e_a)$ for a string $e_a$ of length zero.  A module is said to be {\it semisimple} if it is a (finite) direct sum of simple modules.  Let $M\in \Lambda$-mod, then the {\it top} (resp., {\it socle}) of $M$ is the largest semisimple quotient (resp., submodule) of $M$.

\begin{cor}\label{dom_str_cor}
If $\Lambda$ is a domestic string algebra, then it admits at most finitely many maximal green sequences. 
\end{cor}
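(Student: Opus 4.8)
The plan is to bound the number of complete forward hom-orthogonal sequences by bounding the number of bricks that can ever appear on one. By Lemma~\ref{lemma-bands}, only string modules can appear, and by Theorem~\ref{thm-mgs}, no string module supported on $w^2$ for a band $w$ can appear. So it suffices to show that a domestic string algebra has only finitely many bricks $M(\gamma)$ such that $\gamma$ is not supported on the square of any band; since $\textsf{tors}\Lambda$ would then have only finitely many covering relations, and a finite poset with finitely many covering relations has only finitely many maximal chains, the corollary follows.

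First I would record that a domestic string algebra has only finitely many bands up to the equivalence $\sim$; choose a finite set $w_1,\dots,w_m$ of representatives, and since minimality is a condition on a band, we also get finitely many minimal bands. Next I would argue that any string $\gamma$ of sufficiently large length must contain a square $u^2$ of some band $u$. The key observation is the combinatorial pigeonhole fact for string algebras: by (S1) and (S2), from any vertex there are boundedly many ways a reduced walk can continue, and more precisely the set of strings of length $\ell$ that do \emph{not} contain a subword of the form $u^2$ with $u$ a band is finite — indeed, I would show that if $\gamma$ is long enough it must revisit a configuration (a vertex together with the incoming/outgoing directions) forcing a repeated cyclic subword, and Lemma~\ref{lemma2} converts a repeated \emph{undirected} subword into a power of a band while the directed case is controlled because $\Lambda$ is finite dimensional (directed strings have bounded length). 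So there is a uniform bound $L = L(\Lambda)$ such that every string of length $> L$ is supported on $w^2$ for some band $w$.

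Combining: any brick that can appear on a maximal green sequence is a string module $M(\gamma)$ with $\gamma$ not supported on any $w^2$, hence $\ell(\gamma) \le L$; and there are only finitely many strings of length at most $L$ (again by (S1), the quiver being finite). Therefore only finitely many bricks can occur on any maximal green sequence of $\Lambda$. Each maximal green sequence is a sequence of such bricks with no repetitions (consecutive torsion classes are distinct, and the labeling bricks of a chain are determined and distinct), so the number of maximal green sequences is at most the number of finite sequences without repetition drawn from this finite set of bricks, which is finite.

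The main obstacle is the middle step: proving the uniform length bound $L$, i.e.\ that a domestic string algebra cannot have arbitrarily long strings avoiding the square of a band. The subtlety is that a long string could in principle wander through many distinct bands or spend its length in directed (acyclic) stretches; one must show that finiteness of the band set plus finite dimensionality (bounding directed substrings) plus the string-algebra axioms genuinely force a repeated band-subword once the length exceeds a computable threshold. Once this pigeonhole-type statement is in hand — likely via an analysis of "maximal $w$-substrings" as already developed in Section~\ref{sec:bricks} together with Lemma~\ref{lemma2} — the rest of the argument is bookkeeping.
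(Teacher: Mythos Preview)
Your overall strategy matches the paper's: use Lemma~\ref{lemma-bands} to discard band modules, then argue that any sufficiently long string is supported on $w^2$ for some band $w$, so by Theorem~\ref{thm-mgs} only finitely many bricks can ever occur on a maximal green sequence. The paper runs this by contradiction (infinitely many maximal green sequences $\Rightarrow$ infinitely many such bricks $\Rightarrow$ arbitrarily long strings), but the content is the same.

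The gap is precisely the step you flag as the ``main obstacle,'' and your sketch of how to fill it is not quite right. Repeating a configuration $(\alpha,a,\beta)$ once in a long string gives you \emph{one} band as a substring, not a square. To force a square you need the same configuration to occur three times, yielding two bands $w',w''$ sitting between consecutive occurrences, and then you need $w'\sim w''$. Your pigeonhole on configurations alone does not give this; two distinct bands could in principle share the same socle configuration. The paper closes this gap by invoking \cite[Corollary 1]{ringel1995some}: in a domestic string algebra there is at most one band, up to equivalence, containing a given simple in its socle. Hence the two bands between the three occurrences of $^{\alpha}\!\searrow a\swarrow^{\beta}$ coincide, and the string is supported on $w^2$. (Equivalently, without citing Ringel: if $w'\not\sim w''$ both pass through the same configuration, interleaving them produces infinitely many bands, contradicting domesticity.) Your suggestion that the bound comes ``via an analysis of maximal $w$-substrings together with Lemma~\ref{lemma2}'' is off target; Lemma~\ref{lemma2} only tells you an undirected string whose square is a string is a power of a band, which is not the missing ingredient here.

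Once you insert this uniqueness-of-band argument, your proof is complete and essentially identical to the paper's.
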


\begin{proof}
Suppose on the contrary that $\Lambda$ is domestic and admits infinity many maximal green sequences.  By Lemma~\ref{lemma-bands}, no band module can lie on a maximal green sequence.  Then there exist infinitely many brick string modules $M$ that lie on maximal green sequences.  In particular, the dimension of these modules grows arbitrarily large. Because $\Lambda$ is finite dimensional, there are no uniserial modules over $\Lambda$ of arbitrarily large length.  Since there are infinitely many such brick modules, there must exist $M$ that lies on a maximal green sequence and that contains a sufficiently large number of copies of some simple module $S(a)$ in its socle such that the same configuration $^{^{\alpha}\,{\searrow} }a \,^{{\swarrow}^{\beta}}$ appears at least three times in the diagram for $M$.   However, the substring of $M$ in between two such adjacent configurations yields a band.  Since $\Lambda$ is domestic, \cite[Corollary 1]{ringel1995some} says that there is at most one band $w$ in $\Lambda$ up to equivalence with a given simple module in its socle.    This implies that $M$ is supported on $w^2$ for some band $w$. By Theorem~\ref{thm-mgs}, this is not possible. 
\end{proof}

The above corollary says that if a domestic string algebra admits a maximal green sequence then it admits finitely many of them.  However, the next example shows that not all domestic string algebras admit a maximal green sequence.  We also make the following observation regarding simple modules. 

\begin{remark}\label{rem:simples}
A maximal green sequence $\mathcal{M}$ for $\Lambda$ contains all simple $\Lambda$-modules.  This follows from the discussion in Section~\ref{sec:MGS} and the beginning of Section~\ref{sec:bricks}, as each module $M_i$ in $\mathcal{M}$ comes from a covering relation $\mathcal{T}_i \lessdot \mathcal{T}_{i+1}$ in $\textsf{tors}\Lambda$  where $M_i$ is the unique smallest brick in $\mathcal{T}_{i+1} \setminus \mathcal{T}_{i}$.
\end{remark} 

\begin{ex}\label{ex1}
Consider a string algebra $\Lambda$ given by the following quiver with relations $\text{rad}^3 \,\Lambda=0$.  

$$\xymatrix{ 1 \ar@/^/[rr]^{\alpha_1} \ar@/^2pc/[rr]^{\alpha_2}&& 2 \ar@/^/[ll]^{\beta_1} \ar@/^2pc/[ll]^{\beta_2}}$$

Observe that $\Lambda$ is domestic, because there are only two bands $\alpha_1\alpha_2^{-1}$ and $\beta_1\beta_2^{-1}$.  However, it does not admit a maximal green sequence.   
By Remark~\ref{rem:simples} a maximal green sequence for $\Lambda$ contains its simple modules $S(1)$ and $S(2)$.  If $S(1)$ appears before $S(2)$ then there are infinitely many brick modules 
that we need to place in between $S(1)$ and $S(2)$ such that the resulting sequence is forward hom-orthogonal.
$$S(1), \dots ,\begin{smallmatrix}1\,1\,1\\2\,2\,2\,2\,2\end{smallmatrix}, \begin{smallmatrix}1\,1\\2\,2\,2\end{smallmatrix}, \begin{smallmatrix}1\\2\,2\end{smallmatrix}, S(2)$$
This contradicts the definition of $\mathcal{M}$ being finite.  Similarly, if $S(2)$ appears before $S(1)$ then there are infinitely many modules that need be placed between them.   
$$S(2), \dots, \begin{smallmatrix}2\,2\,2\\1\,1\,1\,1\,1\end{smallmatrix}, \begin{smallmatrix}2\,2\\1\,1\,1\end{smallmatrix}, \begin{smallmatrix}2\\1\,1\end{smallmatrix} , S(1)$$
This means that no set of simple modules can be completed to a maximal green sequence.  Next, we will use this idea to prove the existence of maximal green sequences for certain types of string algebras.

%{\color{blue} we can also delete this second explanation but it might be nice to have a different interpretation? I am fine either way}  \textcolor{red}{I like the following alternative explanation. So I propose that we keep it.}

Alternatively, we can use the notion of torsion classes to see that there are no maximal green sequences in this particular example.  Recall from Section~\ref{sec:MGS} that a maximal green sequence comes from a finite maximal chain in the lattice of torsion classes of  $\Lambda$-mod.  Such a chain starts at the largest element $\Lambda$-mod and terminates at the zero torsion class.  Below we show the portion of  $\textsf{tors}\Lambda$ connected to $\Lambda$-mod. It shows that there are two infinite chains starting in $\Lambda$-mod, and in particular, there are no finite maximal chains starting in $\Lambda$-mod that terminate at the zero torsion class.

%\textcolor{red}{I believe the two infinite chains do end at the zero torsion class. I think the issue is that they are just not finite chains.} {\color{blue} you are right!}

$$\xymatrix{&{  \Lambda \text{-mod}=\mathcal{T} \, (\begin{smallmatrix}1\\2\,2\end{smallmatrix}} \oplus {\begin{smallmatrix} 2\\1\,1\end{smallmatrix})}\ar[dl]\ar[dr] \\
\mathcal{T}\, ({\begin{smallmatrix}2\,2\\1\,1\,1\end{smallmatrix}}\oplus {\begin{smallmatrix}2\\1\,1\end{smallmatrix}}) \ar[d]&& \mathcal{T}\, ({\begin{smallmatrix}1\\2\,2\end{smallmatrix}}\oplus  {\begin{smallmatrix}1\,1\\2\,2\,2\end{smallmatrix}})\ar[d]\\
\mathcal{T}\, ({\begin{smallmatrix}2\,2\\1\,1\,1\end{smallmatrix}}\oplus {\begin{smallmatrix}2\,2\,2\\1\,1\,1\,1\end{smallmatrix}}) \ar[d]&& \mathcal{T}\, ({\begin{smallmatrix}1\,1\,1\\2\,2\,2\,2\end{smallmatrix}}\oplus {\begin{smallmatrix}1\,1\\2\,2\,2\end{smallmatrix}})\ar[d]\\
\vdots&&\vdots}$$
\end{ex}

The next set of results are aimed at identifying certain properties of string algebras that ensure the existence of a maximal green sequence.

\begin{thm}\label{thm_simples}
Let $\Lambda$ be a string algebra such that no simple module appears in the top of some band module and in the socle of another band module, then $\Lambda$ admits a maximal green sequence. 
\end{thm}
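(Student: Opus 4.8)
The plan is to exhibit an explicit ordering of the simple modules and argue that it extends to a complete forward hom-orthogonal sequence, i.e.\ a maximal green sequence. The hypothesis is precisely designed so that the ``bad'' behavior from Example~\ref{ex1}---where one is forced to insert infinitely many bricks between two consecutive simples---cannot occur. Recall from Remark~\ref{rem:simples} that every maximal green sequence must contain all the simple modules $S(1),\ldots,S(n)$, so the natural strategy is to fix their relative order first and then fill in the remaining bricks.

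The key combinatorial input is the following: given two bricks $M(\epsilon)$, $M(\epsilon')$ both supported on a band $w$, a nonzero map between them forces one of them to have $S(a)$ in its socle and the other to have $S(a)$ in its top for some vertex $a$ appearing in $w$ (because a nonzero map $M(\epsilon)\to M(\epsilon')$ factors through a common substring $u$ with $M(u)$ a quotient of $M(\epsilon)$ and a submodule of $M(\epsilon')$, and when the modules are large multiples of a band, the relevant overlap string $u$ is itself supported on the band and its endpoints are governed by the socle/top structure coming from the band). By Theorem~\ref{thm-mgs} no brick appearing on a maximal green sequence is supported on $w^2$, so every relevant brick is supported on $w$ but not $w^2$; such bricks, together with the hypothesis, give us enough control. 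First I would set up, for each band $w$, the partial order on vertices induced by ``top-of-a-band-module'' versus ``socle-of-a-band-module,'' and use the hypothesis to see these two sets of vertices are disjoint across all bands. Then I would define an order $<$ on the simple modules compatible with: $S(a)$ before $S(b)$ whenever $a$ appears in the socle of a band module whose top contains $b$ (equivalently, whenever there could be a nonzero hom from a brick with top $a$-flavor to one with socle $b$-flavor); the disjointness hypothesis guarantees this is acyclic, hence extends to a total order. One also needs to respect the ordering forced by hom's between uniserial (directed-string) bricks and simples, but those constraints come from the partial order on $Q_0$ given by paths, which is already acyclic since $\Lambda$ is finite-dimensional; combine the two into a single acyclic relation and take a linear extension.

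Having fixed the total order $S_{\pi(1)}<\cdots<S_{\pi(n)}$ on simples, the second part is to show it extends to a \emph{complete} FHO sequence. Here I would argue by the standard ``lattice of torsion classes'' interpretation: the chosen ordering of simples determines a maximal chain provided that at each stage, after inserting all bricks $M$ with $\operatorname{Hom}(M_i,M)=0$ for earlier $M_i$ and $\operatorname{Hom}(M,M_j)=0$ for later $M_j$, only finitely many such bricks remain to be inserted between consecutive simples. The only way infinitely many bricks could pile up between two consecutive entries is if there were an infinite family of pairwise hom-orthogonal bricks all ``fitting'' in the same slot; by Theorem~\ref{thm-mgs} and Lemma~\ref{lemma-bands} such a family would have to consist of string modules supported on a band $w$ but not on $w^2$, and a family of arbitrarily large such string modules that are pairwise forward-hom-orthogonal is impossible precisely because large multiples of $w$ force hom's in one fixed direction (top-flavor to socle-flavor), and the chosen order already separates those flavors---so the potentially offending bricks either violate forward-hom-orthogonality with each other or with a simple, and cannot all be squeezed into one slot. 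Thus between any two consecutive simples only finitely many bricks are needed, the chain is finite, and it is a maximal green sequence.

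The main obstacle I anticipate is the second part---making rigorous that ``only finitely many bricks fit between consecutive simples.'' The delicate point is that a brick $M(\gamma)$ supported on a band $w$ but not $w^2$ can still be long (it can have a long directed tail on either side of its single copy of $w$), so one cannot bound lengths outright; instead one must show that within the interval determined by the simples, the relevant bricks form a hom-orthogonal antichain that is necessarily finite. I expect to need a careful case analysis, using the Proposition on $\operatorname{Hom}$ between string modules to track exactly how a common substring $u$ appears as a quotient on one side and a submodule on the other, together with the structural Lemmas~\ref{lemma1},~\ref{lemmaBB},~\ref{lemma_sub-quo} to control which substrings of a band can be the ``socle/top'' of the relevant bricks. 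The disjointness hypothesis should be invoked exactly at the moment one wants to conclude that a hypothetical infinite family would have to contain a hom in the ``forbidden'' direction, contradicting forward-hom-orthogonality.
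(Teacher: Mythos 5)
Your overall strategy coincides with the paper's: use Remark~\ref{rem:simples} to reduce to choosing an order on the simple modules, place every simple occurring in the socle of a band module before every simple occurring in the top of one (the hypothesis is exactly what makes these two sets of simples disjoint, so such an order exists), and then show that only finitely many bricks can be inserted into the resulting weakly FHO sequence. The paper's order is just ``all socle-of-band simples first, then everything else''; your linear extension of the induced acyclic relation amounts to the same thing, and the extra constraints you impose coming from uniserial bricks are not needed.

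The genuine gap is in the finiteness step, which you yourself flag as the obstacle, and the route you sketch for it would not close. The mechanism is not Theorem~\ref{thm-mgs}, nor hom's between pairs of large band-supported bricks; and your worry about ``long directed tails'' is moot, since $\Lambda$ is finite dimensional and directed strings therefore have bounded length. The actual argument is a pigeonhole: there are only finitely many valley configurations $^{^{\alpha}\,{\searrow} }a \,^{{\swarrow}^{\beta}}$, so any brick $M(\gamma)$ of sufficiently large dimension must contain the same configuration twice, and the substring of $\gamma$ between the two occurrences is (a power of) a band $w$ with $S(a)$ in its socle and some $S(b)$ in its top, where $b\neq a$ because $M(\gamma)$ is a brick. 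This hands you an injection $S(a)\hookrightarrow M(\gamma)$ and a surjection $M(\gamma)\twoheadrightarrow S(b)$, with $S(a)$ a socle-of-band simple and $S(b)$, by the hypothesis, not one; so $S(a)$ precedes $S(b)$ in the chosen order and $M(\gamma)$ cannot be placed before, between, or after them without creating a nonzero left-to-right hom. Hence every insertable brick has bounded dimension, there are finitely many, and the sequence completes to a maximal green sequence. Without this step, your plan fails at exactly the point you identify: a large brick need not be supported on $w^2$ for any band $w$ (it only needs to contain \emph{some} band once), so Theorem~\ref{thm-mgs} and the machinery of Lemmas~\ref{lemma1}, \ref{lemmaBB}, and \ref{lemma_sub-quo} give you no bound, and the forbidden hom you need is between the large brick and a simple already placed, not between two large bricks.
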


\begin{proof}
If $\Lambda$ is of finite representation type, then there are finitely many nonisomorphic modules so $\Lambda$ admits a maximal green sequence.  If $\Lambda$ is of infinite representation type, then there are band modules in $\text{mod}\,\Lambda$.  By Remark~\ref{rem:simples}, any maximal green sequence contains all simple $\Lambda$-modules.  Let $S(a_1), \dots, S(a_k)$ be a collection of indecomposable simple $\Lambda$-modules such that each $S(a_i)$ appears in a socle of some band module.  Let $S(b_1), \dots, S(b_t)$ be the remaining simple $\Lambda$-modules.  Now consider the following sequence of modules $\mathcal{X}$.
%$$\mathcal{X}: \,\, S(b_1), \dots, S(b_t), S(a_1), \dots, S(a_k)$$
$$\mathcal{X}: \,\, S(a_1), \dots, S(a_k), S(b_1), \dots, S(b_t)$$
Observe that $\mathcal{X}$ is a sequence of bricks with no nonzero maps between distinct elements.  We want to show that $\mathcal{X}$ can be completed to a maximal green sequence. 

Suppose on the contrary that there exists an infinite weakly forward hom-orthogonal sequence of bricks $\mathcal{X}'$ that contains $\mathcal{X}$ as a subsequence.  Because $\mathcal{X}'$ is infinite while $\Lambda$ is finite dimensional, it follows that $\mathcal{X}'$ contains modules of arbitrarily large dimension.  In particular, there exists a module $M(\gamma)$ in $\mathcal{X}'$ such that the following configuration $^{^{\alpha}\,{\searrow} }a \,^{{\swarrow}^{\beta}}$ appearing at least twice in $\gamma$ for some vertex $a$.  Then the substring of $\gamma$ appearing between the two copies of $^{^{\alpha}\,{\searrow} } a \,^{{\swarrow}^{\beta}}$ corresponds to a a band module with $S(a)$ in the socle.   Thus, $a=a_i$ for some $i\in \{1, \dots, k\}$.  Let $S(b)$ be a summand of the top of this band.  Because $M(\gamma)$ is a brick $b\not= a_i$, and by assumption on $\Lambda$ we have $b=b_j$ for some $j\in \{1, \dots, t\}$.   In $\mathcal{X}'$ we have one of the following situations, where $M(\gamma)$ appears between $S(a_i)$ and $S(b_j)$, or after these modules, or before them. 

$$\dots, S(a_i), \dots, M(\gamma), \dots, S(b_j), \dots \hspace{1cm}\text{or} \hspace{1cm} \dots, S(a_i), \dots, S(b_j),\dots, M(\gamma), \dots $$

$$\text{or} \hspace{1cm}  \dots, S(\gamma), \dots, M(a_i), \dots, S(b_j), \dots $$

However, none of these are possible.  There are nonzero maps from left to right, as $S(b_j)$ is in the top of  $M(\gamma)$ so there is a surjective map $M(\gamma) \twoheadrightarrow S(b_j)$ and $S(a_i)$ is in the socle of $M(\gamma)$ so there is an injective map $S(a_i) \hookrightarrow M(\gamma)$.  This yields a contradiction. 
\end{proof}

As an immediate consequence we obtain the following result. 

\begin{cor}\label{deg_3_cor}
If $\Lambda$ is a string algebra such that at each vertex there are at most three arrows then $\Lambda$ admits a maximal green sequence. 
\end{cor}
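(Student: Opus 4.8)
The plan is to obtain Corollary~\ref{deg_3_cor} as an immediate application of Theorem~\ref{thm_simples}: it suffices to verify the hypothesis of that theorem, namely that no simple $\Lambda$-module $S(a)$ occurs simultaneously in the top of some band module and in the socle of some band module. So I would argue by contradiction: suppose $S(a)$ lies in the top of a band module $M(w_1,\lambda_1,k_1)$ and in the socle of a band module $M(w_2,\lambda_2,k_2)$ (the two band modules need not be distinct).

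The key step is to translate these two facts into statements about the arrows at the vertex $a$ in $Q$. Recall the standard description of band modules: the top of $M(w,\lambda,k)$ is the direct sum of the simples $S(x_i)$ over the positions $x_i$ that are \emph{peaks} of the cyclic walk of $w$, i.e.\ local maxima in the diagram of $w$; at such a position the two walk-edges incident to $x_i$ are two arrows of $Q$ with source $x_i$, and they are distinct because consecutive letters of a string are never mutually inverse. Dually, the socle of $M(w,\lambda,k)$ is the sum of the $S(x_i)$ over the \emph{valleys} of the cyclic walk of $w$, at each of which the two incident walk-edges are two distinct arrows of $Q$ with target $x_i$. Consequently, $S(a)$ lying in the top of a band module forces $a$ to have at least two arrows with source $a$, and $S(a)$ lying in the socle of a band module forces $a$ to have at least two arrows with target $a$.

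Combining these with (S1), which caps the number of arrows with source $a$ and the number with target $a$ at two each, we conclude that if $S(a)$ were in both a top and a socle as above, then $a$ would have exactly two arrows leaving it and exactly two arrows entering it. This contradicts the hypothesis that each vertex of $Q$ carries at most three arrows, read as: the number of arrows with source $a$ plus the number with target $a$ is at most three, a loop being counted once as an incoming and once as an outgoing arrow. Hence no simple module occurs both in the top and the socle of band modules, the hypothesis of Theorem~\ref{thm_simples} is met, and $\Lambda$ admits a maximal green sequence.

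The only step that requires any work is the first part of the middle paragraph: making precise which simples appear in the top and socle of $M(w,\lambda,k)$ and checking that each such simple produces the claimed peak or valley configuration in the diagram of $w$. This can be carried out either from the explicit matrices defining band modules or from the Hom-description of string and band modules recalled earlier in Section~\ref{sec_prelim}; once it is in place, the corollary reduces to the one-line arrow count above.
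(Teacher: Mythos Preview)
Your proposal is correct and follows exactly the same route as the paper: the paper's proof is the single sentence ``If $\Lambda$ is as above then no simple module can appear both in the top of some band module and in the socle of another band module.  The result follows from Theorem~\ref{thm_simples}.''  You have simply supplied the arrow-counting argument that justifies that sentence, namely that a simple summand of the top (resp., socle) of a band module forces two distinct outgoing (resp., incoming) arrows at the corresponding vertex, so both conditions together would require four arrows there.
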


\begin{proof}
If $\Lambda$ is as above then no simple module can appear both in the top of some band module and in a socle of another band module.  The result follows from Theorem~\ref{thm_simples}.
\end{proof}

%\textcolor{red}{everything seems essentially fine to me up to here. I will make the diagrams consistent with the earlier ones soon.}

Next, in the case of a domestic gentle algebra we can always construct a maximal green sequence from a particular ordering of its simple modules.

\begin{thm}\label{dom_gentle_thm}
A domestic gentle algebra admits a maximal green sequence. 
\end{thm}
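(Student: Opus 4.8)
The plan is to deduce this from Theorem~\ref{thm_simples}: I will show that a domestic gentle algebra $\Lambda$ automatically satisfies that theorem's hypothesis, namely that no simple module lies in the top of a band module \emph{and} in the socle of a band module. Granting this, the ordering $S(a_1),\dots,S(a_k),S(b_1),\dots,S(b_t)$ used in the proof of Theorem~\ref{thm_simples} (the $S(a_i)$ being the simples occurring in a socle of some band module) can be completed to a maximal green sequence, which is exactly what must be shown. So the content is the following claim: \emph{if $\Lambda$ is a domestic gentle algebra, then the set of simples occurring in the top of some band module is disjoint from the set occurring in the socle of some band module.}

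To prove the claim, suppose to the contrary that $S(a)$ is a quotient of a band module $M(w_1,\lambda_1,1)$ and a submodule of a band module $M(w_2,\lambda_2,1)$, where the bands $w_1,w_2$ need not be distinct. By the submodule/quotient criterion for string modules, $S(a)$ being a quotient of $M(w_1,\lambda_1,1)$ forces $w_1$ to have a cyclic subword $\gamma^{-1}\delta$ with $\gamma,\delta$ \emph{outgoing} arrows at $a$, and by (S1) these are the only two arrows out of $a$; dually, $S(a)$ being a submodule of $M(w_2,\lambda_2,1)$ forces a cyclic subword $\mu\nu^{-1}$ of $w_2$ with $\mu,\nu$ the two \emph{incoming} arrows at $a$. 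Now look at the four length-two paths through $a$, namely $\mu\gamma,\mu\delta,\nu\gamma,\nu\delta$ (honest nonzero paths of $KQ$). By (S2), at most one path in each ``row'' (fixed first arrow) and each ``column'' (fixed second arrow) avoids $I$; by (G1), at most one in each row and column lies in $I$. Hence, after possibly interchanging $\gamma$ and $\delta$, we get $\mu\gamma,\nu\delta\notin I$ and $\mu\delta,\nu\gamma\in I$. (If $a$ carries a loop some of $\gamma,\delta,\mu,\nu$ coincide; the bookkeeping is the same and the splicing below is carried out with the appropriate ``switch'', e.g.\ $A^nC$ in place of $A^nC^{-1}$.)

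Rotate $w_1$ to a band $A$ beginning with $\delta$ and ending with $\gamma^{-1}$, and rotate $w_2$ to a band $C$ beginning with $\nu^{-1}$ and ending with $\mu$; both are closed walks at $a$. For every $n\ge 1$, the word $A^nC^{-1}$ is a closed walk at $a$ all of whose powers are strings: the only junctions not already occurring in $w_1$ or $w_2$ are $\gamma^{-1}\mu^{-1}$ (between $A^n$ and $C^{-1}$), which is a legal step since $\mu\gamma\notin I$ and $\mu^{-1}\ne\gamma$, and the wrap-around junction $\nu\delta$, which is legal since $\nu\delta\notin I$ and $\delta\ne\nu^{-1}$; and because $\Lambda$ is \emph{gentle}, (G2) says every path in $I$ contains a length-two subpath in $I$, so neither new junction can be completed to a longer forbidden path. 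Thus each $A^nC^{-1}$ is a band or a proper power of a band. If $\Lambda$ were domestic its bands would have bounded length $L_0$, so for large $n$ we would have $A^nC^{-1}=u^k$ with $u$ a band of length $\le L_0$ and $k\ge 2$; comparing periods of the prefix $A^n$ (whose minimal period is $\ell(w_1)$ since $A$ is primitive) via the Fine--Wilf theorem forces $\ell(w_1)\mid \ell(u)$ and then $u=A$, hence $C^{-1}=A^{k-n}$, which is impossible because $C^{-1}$ begins with $\mu^{-1}$ while $A$ begins with $\delta\ne\mu^{-1}$. Therefore $\{A^nC^{-1}\}_{n\ge1}$ contains bands of unbounded length, contradicting domesticity and proving the claim.

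The main obstacle is precisely this last periodicity step: ensuring that the spliced words $A^nC^{-1}$ yield \emph{infinitely many pairwise inequivalent} bands rather than all collapsing to powers of a few short bands. This is where primitivity of $A$, the bounded-band-length consequence of domesticity, and gentleness (absence of long relations, so that the new junctions really are legal) are used together. A secondary delicate point is the careful determination of the local $I$-relations at $a$ from the string and gentle axioms, including the loop case. The remaining ingredients --- the reduction to Theorem~\ref{thm_simples} and the dictionary between ``$S(a)$ in the top/socle of a band module'' and the cyclic-subword descriptions --- are routine.
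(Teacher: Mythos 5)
Your proof is correct but takes a genuinely different route from the paper's. The paper does not check the hypothesis of Theorem~\ref{thm_simples} for domestic gentle algebras; instead it builds an explicit ordering of the simple modules by following chains of bands in which $S(a_{i+1})$ lies in the socle of a band $w_i$ and simultaneously in the top of a further band $w_{i+1}$, using \cite[Corollary 1]{ringel1995some} and a path-splicing analysis to show these chains have no repeated simples and terminate, and then verifying that the resulting order blocks every string module supported on a square of a band. The paper's construction therefore implicitly treats the situation ``a simple lies in the top of one band module and in the socle of another'' as genuinely possible for a domestic gentle algebra; you prove it is impossible, via the $A^nC^{-1}$ splicing, gentleness (so the only new length-two subpaths at the two junctions are legal and, by (G2), nothing longer can be forbidden), and a Fine--Wilf periodicity contradiction. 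Granting that claim, Theorem~\ref{thm_simples} applies at once, which makes your route shorter and more conceptual, strictly stronger than what the paper establishes in this step, and independent of Ringel's uniqueness result. The two places you flag as delicate are exactly the ones to shore up before this is a complete proof: the loop cases need an explicit (if routine) case check, and the Fine--Wilf conclusion ``$u=A$'' is cleanest rephrased as follows --- $\ell(w_1)\mid p$ forces the leading symbol $\mu^{-1}$ of the $C^{-1}$-block in the bi-infinite $p$-periodic word to coincide with the symbol $p$ places earlier, which is the initial $\delta$ of a copy of $A$, giving $\mu^{-1}=\delta$ and a contradiction.
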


\begin{proof}
Let $\Lambda$ be a domestic gentle algebra.  If it is of finite representation type, then the conclusion follows.  Otherwise, suppose $\Lambda$ is of infinite representation type, and let $W$ be a set of bands in $\Lambda$ that do not have the same simple module appearing both in the top and in the socle of the same band.  Since $\Lambda$ is domestic, we conclude that $W$ is finite. 

Now, we construct a particular sequence of simple $\Lambda$-modules using the following procedure.   Let $S(a_1)$ be a summand of the top of some band module $M(w_1, \lambda, 1)$ where $w_1\in W$.  Then there are two distinct paths $a_1\to \dots \to a_2$ and $a_1\to \dots \to a_2'$ in $w_1$ starting in $a_1$ and ending in $a_2, a_2'$ such that $S(a_2), S(a_2')$ appear in the socle of $M(w_1, \lambda, 1)$.   Choose one of the two endpoints, say $a_2$ and obtain the sequence of simple modules $S(a_2), S(a_1)$.  Note that $a_1\not=a_2$ because $w_1\in W$.    If $S(a_2)$ does not appear in the top of any other band module coming from $W$, then we stop.  

Otherwise, there exists a band $w_2\in W$ such that $S(a_2)$ is a summand of the top of $M(w_2, \lambda, 1)$.  There are two distinct paths $a_2\to \dots \to a_3$ and $a_2\to \dots \to a_3'$ in $w_2$ starting in $a_2$ and ending in $a_3, a_3'$ such that $S(a_3), S(a_3')$ appear in the socle of $M(w_2, \lambda, 1)$.   Because $\Lambda$ is gentle, exactly one of these paths precomposed with $a_1\to \dots \to a_2$ equals zero.  Without loss of generality suppose that the path from $a_1$ to $a_2$ to $a_3$ is nonzero, and obtain a sequence of simple modules $S(a_3), S(a_2), S(a_1)$.    If $S(a_3)$ does not appear in the top of any other band module coming from $W$, then we stop.  

Otherwise, we can keep going in this way, until we obtain a sequence $S(a_k), \dots, S(a_1)$ where $S(a_k)$ is not in the top of any band coming from $W$.  Observe that this process must terminate as the algebra $\Lambda$ is finite dimensional and any path starting in $a_1$ must eventually stop.   

Now, we claim that the simple modules along this sequence are distinct.   Suppose on the contrary that some simple $S(a_i)$ for $i\in \{1, \dots, k\}$ appears twice.  Then, there is a nonzero path in $\Lambda$ given by
$$\gamma: a_i\to \dots \to a_{i+1}\to \dots \to a_j \to \dots \to a_{i}=a_{j+1},$$ where $S(a_{j}), \ldots, S(a_{i+1})$ are the simples appearing between the two copies of $S(a_i)$. That the set $\{S(a_j), \ldots, S(a_{i+1})\}$ is nonempty follows the fact that there are no bands in $W$ have the same simple module appearing in both its top and its socle.  
%\textcolor{red}{What if $S(a_{i+1}) = S(a_i)$? Is anything different?}  {\color{blue} $a_i\not=a_{i+1}$ because then the band $w_i$ has $S(a_i)$ in its top and socle, and in the very beginning of the proof we assumed that all bands in $W$ do not have the same simple appearing both in the top and socle. }

The band $w_j$ consists of the string 
$$a'_{j+1}\leftarrow \dots \leftarrow a_j\to \dots \to a_{j+1}=a_i \leftarrow \dots \to a'_{j+1}.$$
  
Let $\delta_{j+1}$ denote the substring of $w_j$ given by $a_i \leftarrow \dots \to a_{j+1}'$.
%\textcolor{red}{Let $\delta_{j+1}: a_i \leftarrow \dots \to a_{j+1}'$ denote the associated substring of $w_j$ 
%that does not start with $a_i \leftarrow \dots \leftarrow a_j$.}  
Similarly, in the band $w_{j-1}$ there exists a string $\delta_{j}$ given by $a_j \leftarrow \dots \to a_{j}'$.
%\textcolor{red}{Similarly, in the band $w_{j-1}$ there exists a string $\delta_{j}: a_j \leftarrow \dots \to a_{j}'$ that does not start with $a_{j}\leftarrow \dots \leftarrow a_{j-1}$.}   
Now consider the string $\delta_{j}$ composed with the directed path $a_j\to \dots \to a_{j+1}'$ resulting in the expression 
$$\rho_{j}:  a_{j+1}'\leftarrow\dots \leftarrow a_j \leftarrow \dots \to a_{j}'.$$   
We claim that it is a string, because $\Lambda$ is gentle.  Indeed, $\rho_{j}$ is a path through $a_j$ and we already know that $a_{j-1}\to \dots \to a_{j}\to \dots \to a_{j+1}$ is another nonzero path through the same vertex. 
Now, define the remaining 
$$\delta_r: a_{r} \leftarrow \dots \to a_{r}'$$  
for $r\in \{j+1, \dots, i\}$ and 
$$\rho_r:a_{r+1}'\leftarrow\dots \leftarrow a_{r} \leftarrow \dots \to a_{r}'$$
for $r\in \{j, \dots, i+2\}$ in the same way.   Finally, we obtain the following band given by the compositions of paths. 
%$$a_i\to \dots \to a_{i+1}' \circ \rho_{i}^{-1} \circ \rho_{i+1}^{-1} \circ \dots \circ \rho_{j-1}^{-1} \circ \delta_j^{-1} \circ \gamma^{-1}.$$
$$\gamma   \delta_{j+1}   \rho_{j}     \rho_{j-1}  \dots   \rho_{i+1}   \rho_i  (a_{i+1}' \leftarrow \dots \leftarrow a_i)$$

%\textcolor{red}{I think we can remove the ``$\circ$'' symbols}

This gives a band module with $S(a_i)$ in its top and it is different from the band $w_i$, which also has $S(a_i)$ in its top. As mentioned above, \cite[Corollary 1]{ringel1995some} says that there is at most one band $w$ in $\Lambda$ up to equivalence with a given simple module in its socle. This is equivalent to the statement that there is at most one band $w$ in $\Lambda$ up to equivalence with a given simple module in its top. We obtain a contradiction. So there are no repeated module in the sequence $S(a_k),\ldots, S(a_1)$. 

%\textcolor{red}{CUT This contradicts the assumption that $\Lambda$ is a domestic string algebra, and shows the claim that there are no repeated modules in the sequence $S(a_k), \dots, S(a_1)$.}
%\textcolor{red}{Are we actually contradicting a ``dual'' version of Ringel's theorem cited above?} {\color{blue} that's right, I checked the reference again and this would be the dual version of what is precisely stated in Corollary 1.  I think we can just say this here, we don't need to prove the dual version, it clearly follows because applying duality to $\Lambda$ preserves the fact that it is domestic.}
%\textcolor{blue}{Thanks. I made a proposed change above.}

Now, given such a sequence of distinct simple modules $S(a_k), \dots, S(a_1)$ we can dually start with $S(a_{k}), S(a_{k-1})$ and the band $w_{k-1}$ and move backwards until we reach $S(a_1)$.  If $S(a_1)$ does not appear in the socle of some other band module $M(w_{-1}, \lambda, 1)$ then we stop.  Otherwise, we obtain a simple $S(a_{-1})$, where $S(a_{-1})$ is in the top of $M(w_{-1}, \lambda, 1)$ and $S(a_1)$ is in its socle.  Moreover, there is a directed path $a_{-1}\to \dots \to a_1$ in $w_{-1}$ such that the composition with $a_1\to \dots \to a_k$ is nonzero.  Hence, we can continue in this way until the process terminates, and, by the same argument as above, we obtain a sequence of distinct simple modules 
$$\mathcal{X}_1: \,\, S(a_k), \dots, S(a_1), S({a_{-1}}), \dots, S(a_{-t+1}), S(a_{-t})$$
%S(a_{-t}), S(a_{-t+1}), \dots, S({a_{-1}}), S(a_1), \dots, S(a_k)$$
where $S(a_k)$ does not appear in the top of some band coming from $W$ and $S(a_{-t})$ does not appear in the socle some band coming from $W$.   

Let $W_1\subset W$ be a collection of bands $w$ such that $S(a_i)$ for some $i\in \{-t, \dots, k\}$ appears in the top or the socle of $M(w, \lambda, 1)$.  Because $\Lambda$ is domestic $W_1 = \{w_{-t},\dots, w_{-1}, w_1, \dots, w_{k-1}\}$.  Next, we show that no string module $M$ supported on a square of a band in $W_1$ can be added to $\mathcal{X}_1$ such that the resulting sequence is weakly forward hom-orthogonal. Suppose $M$ is supported on $w^2_i\in W_1$ for some $i\in \{-t, \dots, k-1\}$.  Then $M$ has $S(a_i)$ in its top and $S(a_{i+1})$ in its socle.  Then, $M$ cannot be placed to the left of $S(a_i)$ or to the right of $S(a_{i+1})$ in $\mathcal{X}_1$.  This shows that no such $M$ can be added to $\mathcal{X}_1$, as desired. 

Now consider $W^2 = W\setminus W_1$.  Let $S(b_1)$ be in the top of some band module coming from $W^2$.  Observe that $S(b_1)$ does not appear in $\mathcal{X}_1$, because there is a unique band in $\Lambda$ with this property.  Now, construct a sequence of simple modules $\mathcal{X}_2$ starting from $S(b_1)$ in a similar way as above.  By the same reasoning we conclude that no string module $M$ supported on a square of a band in $W^2$ can be added to $\mathcal{X}_2$ such that the resulting sequence has no nonzero maps from right to left.   Then $W_2 \subset W^2$ consists of all bands that have $S(b_j)$ in its top or socle such that $S(b_j)$ lies in $\mathcal{X}_2$.   

Next, consider $W^3=W^2\setminus W_2$ and keep going in this way.  Eventually, this process stops because $W$ is a finite set, and we obtain sequences of distinct simple modules that we can combined to create a single sequence $\mathcal{X}$ of distinct simple modules as follows. 
$$\mathcal{X}:\,\,\mathcal{X}_1, \mathcal{X}_2, \dots, \mathcal{X}_p$$  
By construction no string module supported on a square of a band in $W$ can be added to $\mathcal{X}$ such that the resulting sequence is weakly forward hom-orthogonal.   All other string modules supported on a square of a band not in $W$ are not bricks.  Therefore, since $\Lambda$ is domestic, we conclude that there are at most finitely many brick modules that can be added to $\mathcal{X}$ so that the resulting sequence is forward hom-orthogonal.  Using Theorem~\ref{MGS_thm}, this shows that $\mathcal{X}$ can be completed to a maximal green sequence and proves the theorem. 
\end{proof}

It is not clear whether the condition on the algebra being domestic can be removed from the statement of the previous theorem.  The algebra in Example~\ref{ex1} that does not admit a maximal green sequence is domestic but not gentle.  The question remains whether there are gentle algebras that do not admit any maximal green sequences.

%\textcolor{red}{I think this all makes sense now. Do we really not need to show how to insert the other modules to complete the sequence to a maximal green sequence?}
%{\color{blue} I think we don't.  If $\mathcal{X}$ is already a MGS then we are done.  Otherwise there is a finite set of modules say $I$ that can be added to $\mathcal{X}$ such that the result is still weekly forward hom-orthogonal. We can choose the first module $M_1$ from $I$ add it to $\mathcal{X}$ such that it is forward hom-orthogonal, then either the result is a complete FHO and we are done or we take another module $M_2$ in $I\setminus M_1$  and keep going...   Do you think there is a problem with this argument? I think this is really an application of Theorem 2.1.}

%\textcolor{red}{No, I don't think there is a problem. I guess I just hadn't fully internalized that we can just find a good ordering of all simples and then then show that there are only finitely many other bricks that could possibly be added. Great work!}

%{\color{blue}  I added some observations/discussions in cyan, feel free to edit those and add other stuff.}

The following corollary summarizes some of the results in this section. 

\begin{cor}
Let $\Lambda=KQ/I$ be a domestic string algebra satisfying one of the following conditions.
\begin{itemize}
\item No simple $\Lambda$-module appears in the top of some band module and in the socle of another band module.
\item There are at most three arrows incident to every vertex of $Q$. 
\item $\Lambda$ is gentle. 
\end{itemize}
Then the set of maximal green sequences for $\Lambda$ is finite and nonempty.  

\end{cor}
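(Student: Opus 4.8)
\section*{Proof proposal}

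The plan is to deduce this corollary directly from the theorems and corollaries already established in this section; no new argument is needed beyond assembling the pieces. First I would observe that the finiteness half of the statement is immediate: the hypothesis that $\Lambda$ is a domestic string algebra is exactly what is required to invoke Corollary~\ref{dom_str_cor}, which guarantees that $\Lambda$ admits at most finitely many maximal green sequences. This bound holds in all three cases, so it only remains to show that the set of maximal green sequences is nonempty under each of the three listed conditions.

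For nonemptiness I would treat the three cases separately. If $\Lambda$ satisfies the first condition, that no simple $\Lambda$-module appears both in the top of one band module and in the socle of another, then Theorem~\ref{thm_simples} applies verbatim and produces a maximal green sequence. If $\Lambda$ satisfies the second condition, that every vertex of $Q$ is incident to at most three arrows, then Corollary~\ref{deg_3_cor} applies and yields a maximal green sequence; note this case is logically subsumed by the first, since the proof of Corollary~\ref{deg_3_cor} shows the degree-three hypothesis forces the top/socle separation of band modules, but it is worth recording explicitly. If $\Lambda$ is gentle, then since $\Lambda$ is also domestic by hypothesis, Theorem~\ref{dom_gentle_thm} applies and constructs a maximal green sequence.

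In each of the three cases we have shown that $\Lambda$ admits at least one maximal green sequence, and combined with the finiteness coming from Corollary~\ref{dom_str_cor}, the set of maximal green sequences for $\Lambda$ is finite and nonempty, as claimed. I do not anticipate any genuine obstacle here: the only subtlety is bookkeeping, namely making sure that the domesticity hypothesis is what feeds Corollary~\ref{dom_str_cor} while the three alternative structural conditions are precisely the hypotheses needed to feed Theorem~\ref{thm_simples}, Corollary~\ref{deg_3_cor}, and Theorem~\ref{dom_gentle_thm} respectively. It may be worth remarking that the gentle case genuinely requires domesticity both for the finiteness and for Theorem~\ref{dom_gentle_thm}, whereas Example~\ref{ex1} shows that domesticity alone does not suffice for nonemptiness, so all the hypotheses are being used.
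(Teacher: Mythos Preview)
Your proposal is correct and matches the paper's treatment exactly: the paper states this corollary without proof, presenting it explicitly as a summary of the results in the section, so assembling Corollary~\ref{dom_str_cor} for finiteness with Theorem~\ref{thm_simples}, Corollary~\ref{deg_3_cor}, and Theorem~\ref{dom_gentle_thm} for nonemptiness is precisely what is intended.
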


%{\color{red} I reworded this paragraph} 
We remark that all of the statements above on the existence of maximal green sequences are obtained by constructing a particular sequence of simple modules that can always be completed to a finite FHO sequence regardless of what other modules we add to it.  However, Example~\ref{ex:simples} shows that the information contained only in a sequence of simple modules is generally not enough.  Sometimes a given sequence of simple modules completes to a finite FHO sequence and other times it does not. Hence, it might be possible to obtain further results on the existence of maximal green sequences by starting with weakly FHO sequences made up of other types of module in addition to just the simple ones.   For instance the next step would be to consider modules of dimension at most two, as they carry the additional information about the structure of the arrows in the quiver.

\begin{ex}\label{ex:simples}
Let $\Lambda$ be the path algebra of the following quiver without any relations.  This is an algebra of affine type $\tilde{A}_{1,2}$ and contains a single band $w=\beta_1\beta_2\alpha^{-1}$. 

\[\xymatrix{1\ar[rr]^{\alpha}\ar[dr]_{\beta_1}&&2\\ & 3\ar[ur]_{\beta_2}}
\]

Consider a sequence of simple $\Lambda$-modules $\mathcal{X}: S(1), S(2), S(3)$.  Then if we place the module $\begin{smallmatrix}1\\3\end{smallmatrix} = M(\beta_1)$ in between $S(2)$ and $S(3)$ then there are only finitely many modules that can be added to the resulting sequence.  In particular, regardless of what other bricks we add, it can always be completed to a maximal green sequence, for example as follows. 

\[S(1), \begin{smallmatrix}1\\2\end{smallmatrix}, S(2), \begin{smallmatrix}1\\3\end{smallmatrix},  S(3)\]

On the other hand, if we place $\begin{smallmatrix}1\\3\end{smallmatrix}$ in between $S(1)$ and $S(2)$, then regardless of what other bricks we add to this sequence, we still obtain the following weekly FHO sequence that contains a module $M(w^2)$ supported on a square of a band.  By Theorem~\ref{thm-mgs} it cannot be completed to a maximal green sequence. 

 \[S(1), \begin{smallmatrix}1\\3\end{smallmatrix}, \hspace{-.15in} \begin{smallmatrix}1\,\,\,\,\,\,\,\,\,\,\\\,3\,\,\,1\\ \,\,\,\,\,\,\,\,\,\,\,2\,\,\,3\,\,\,1\\\,\,\,\,\,\,\,\,\,\,\,\,\,\,\,\,\,\,2\end{smallmatrix}, S(2),  S(3)\]
 
In particular, we can keep placing modules $M(w^3), M(w^4), \dots, $ supported on higher and higher powers of the band to the right of $M(w^2)$.
 \end{ex}

\bibliography{bib_mgs_strings.bib}
\bibliographystyle{plain}

\end{document}